\newcommand{\marginnote}[1]{\ifthenelse{\isodd{\thepage}}{\normalmarginpar}
{\reversemarginpar}\marginpar{\fbox{\parbox{15mm}{\sloppy\footnotesize #1}}}}
\newtheorem{thm}{Theorem}[section]
\newtheorem{corol}[thm]{Corollary}
\newtheorem{lemma}[thm]{Lemma}
\newtheorem{prop}[thm]{Proposition}
\newtheorem{defin}[thm]{Definition}
\theoremstyle{remark}
\newtheorem{rem}[thm]{Remark}
\newtheorem{ex}[thm]{Example}
\newenvironment{remark}{\begin{rem}\rm}{\qee\end{rem}}
\newcommand{\Hom}{\operatorname{Hom}}
\newcommand{\cO}{{\mathcal O}}
\newcommand{\cF}{{\mathcal F}}
\newcommand{\Pt}{{\mathbb P^3}}
\newcommand{\R}{{\mathbb R}}
\newcommand{\C}{{\mathbb C}}
\newcommand{\Z}{{\mathbb Z}}
\newcommand{\Q}{{\mathbb Q}}
\newcommand{\qee}{\mbox{\hspace{0.2mm}}\hfill$\triangle$}
\newcommand{\var}{{{\mathbb P}_\Sigma}}
\begin{document} 
\title[Noether-Lefschetz locus of surfaces in toric 3-folds]{The Noether-Lefschetz locus\\[8pt] of surfaces in toric threefolds}
\author{\vskip-12pt\small Ugo Bruzzo$^{\S\ddag}$ and Antonella Grassi$^\P$}
\address{\rm $^\S$ Area di Matematica, Scuola Internazionale Superiore di Studi   \\ Avanzati (SISSA),
Via Bonomea 265, 34136 Trieste, Italia}
\address{\vskip-12pt\rm $^\ddag$ Istituto Nazionale di Fisica Nucleare, Sezione di Trieste}
\address{\vskip-12pt\rm $^\P$ Department of Mathematics, University of Pennsylvania,\\
David Rittenhouse Laboratory, 209 S 33rd Street,\\ Philadelphia, PA 19104, USA}

\email{bruzzo@sissa.it, grassi@sas.upenn.edu}
\thanks{Support for this work was provided by the NSF Research Training Group Grant
DMS-0636606, the Brazilian CNPq grant 310002/2015-0, {\sc prin} ``Geometria delle variet\`a  algebriche,''  and {\sc gnsaga-in}d{\sc am}. U.B. is a member of the {\sc vbac} group.}
\date{\today}
\subjclass{14C22, 14J70, 14M25}
\begin{abstract}   The Noether-Lefschetz theorem asserts that  any curve in a very general surface $X$  in $\mathbb P^3$  of degree $d \geq 4$  is  a restriction of a surface in the ambient space, that is, the Picard number  of $X$ is   $1$. We proved previously that under some conditions, which
replace the condition  $d \geq 4$, {a very general} surface   in a simplicial toric threefold $\var$ (with orbifold singularities) has the same Picard number as $\var$.   Here we define the Noether-Lefschetz  loci  of  quasi-smooth surfaces in  $\var$  in a linear system of a Cartier ample divisor with respect to a (-1)-regular, respectively 0-regular, ample Cartier divisor, and   give bounds on their codimensions.   We also study the components of the Noether-Lefschetz loci   which contain  a line, defined as a rational curve which is minimal in a suitable sense.
\end{abstract}

\maketitle

\section{Introduction}
 The Noether-Lefschetz theorem states that  any curve in a very general surface $X$  in $\mathbb P^3$  of degree $d \geq 4$  is  a restriction of a surface in the ambient space, namely the Picard number  of $X$ is   $1$  (a point is very general if it lies outside a countable union of closed subschemes of positive codimension). 
The Noether-Lefschetz locus is the locus where the Picard number is greater than $1$.   Green  \cite{Green2,Green1} and Voisin \cite{Voisin88Precision}  proved that if ${\mathcal S}_d $ is the locus of the degree $d$ surfaces in $\mathbb P^3$ whose   Picard number  is not $1$,   every component of ${\mathcal S}_d $ has codimension $\geq d-3$, with equality for the components of surfaces containing a line. This result has been generalized by Otwinowska  \cite{OtwinowskaJAG} to hypersurfaces in $\mathbb P^n$; also Lopez-Maclean  obtained a bound on the codimension of  the Noether-Lefschetz locus of surfaces associated with a very ample line bundle in  smooth threefolds, under some conditions \cite{LopMac}.

 We continue the analysis of \cite{BG1}, and consider a  projective toric threefold  $\var$ with orbifold singularities; $\var$ is associated with a $3$-dimensional complete simplicial fan $\Sigma$ and is $\mathbb Q$-factorial  (see Section \ref{divisors}).   Let  $\beta$ be a nef (numerically effective) class in  the class group $Cl(\var)$ of Weil divisors modulo rational equivalence and consider a  surface $X$ in $\var$ whose class (degree) in $Cl(\var)$ is $\beta$. If $X$ is general, it is quasi-smooth, that is, its only singularities are those inherited from $\var$ (Section \ref{hypersurfaces}).   
 
 Let $\mathcal M_\beta$ be the moduli space of surfaces in $\var$ of degree $\beta$ modulo automorphisms of $\var$.   In \cite{BG1}, see Theorem \ref{BG1} in Section \ref{codimsec},
  we proved that for $\beta$ ample and $-\beta_0$ the canonical class of $\var$,  if the multiplication morphism
 \begin{equation} \label{eqabove}
  R(f)_\beta\otimes R(f)_{\beta-\beta_0} \to R(f)_{2\beta-\beta_0}
   \end{equation}
is surjective,     very general points of $\mathcal M_\beta$ correspond to  surfaces whose Picard number equals the Picard number of $\var$; here $R(f)$ is the Jacobian ring, see Section \ref{hypersurfaces}.
If $\var=\Pt$ and $\beta=d \geq 4$,   or equivalently $\beta- \beta_0 $ is nef,
 the morphism in \eqref{eqabove} is always surjective.
Also, the morphism is surjective whenever  $\beta-\beta_0$ is trivial, that is, $X$ is a K3 surface in a Fano threefold  $\var$.
 If $\var$ is an Oda variety, that is,  the sum of two polytopes associated with a nef and an ample divisor is equal to  their Minkowski sum, the multiplication map in \eqref{eqabove} is always surjective, see  Section \ref{Oda}. 

If we  write $\beta-\beta_0=n \eta$ for  an ample Cartier  primitive class $\eta$,  the condition  $n \geq 0$  generalizes the classical condition $d \geq 4$. We define the Noether-Lefschetz locus with respect to $\beta$ to be the closed subscheme
$U_\eta(n)$ of $\mathcal M_\beta$ corresponding to quasi smooth surfaces whose Picard number is strictly larger than that of $\var$ (Definition \ref{defvarie}).  
In particular we have an  upper bound on the codimension of any  irreducible component on the Noether-Lefschetz locus:

\smallskip
 
\noindent{\bf Proposition \ref{above}.} {\em 
\begin{equation}  \operatorname{codim} U_{\eta}(n) \leq h^{2,0}(S)=h^0(\var, \cO_\var(n \eta)), \end{equation}
where $S$ is a quasi-smooth surface in the linear system $\beta$.
}

\smallskip

The classical proof  of the codimension of the Noether-Lefschetz locus for $\var=\Pt$
relies implicitly  on the fact that $ \eta= \cO_\Pt(1)$ is $(-1)$-regular and any line bundle of degree $ d \geq 4$ is 0-regular. However, in Theorem \ref{A3} we show that $\mathbb P^n$ is the only    toric $n$-fold with an ample $(-1)$ regular line bundle. So we consider toric varieties with a 0-regular ample line bundle. Our proof generalizes the arguments of Green in \cite{Green2, Green1} and  relies on vanishing theorems and dualities for toric varieties, as well  as   the  Castelnuovo-Mumford regularity of certain bundles.

 In Section \ref{codimsec} we bound the codimension of the Noether-Leschetz locus. In Theorem \ref{mainR}  we prove
 
\smallskip\noindent
{\bf Theorem \ref{mainR}}. {\em 
Let $\var$ be a   simplicial toric threefold, $\eta$  an ample {primitive} Cartier class, $\beta_0=-K_\var$, $\beta\in \operatorname{Pic}(\var)$   an ample Cartier class that satisfies $\beta - \beta_0 = n\eta$ for some $n \geq 0$. Assume  that $\beta$ is 0-regular with respect to $\eta$.
If $\eta$ is (-1)-regular,  
then
\begin{equation}\label{inequalityA3}\operatorname{codim} U_{\eta}(n) \geq n+1.\end{equation}
 If $\eta$ is 0-regular, then
 \begin{equation}\label{inequalityA2}\operatorname{codim} U_{\eta}(n)\geq n.\end{equation}
}

\smallskip\noindent
{\bf Corollary \ref{Fano}.} 
{\em
 Let $\var$ be a  simplicial Fano toric threefold, $\eta$  a  {primitive} nef divisor, $\beta_0=-K_\var$, $\beta\in \operatorname{Pic}(\var)$   an ample Cartier class that satisfies $\beta - \beta_0 = n\eta$ for some $n \geq 3$.  \\
If $\eta$ is $(-1)$-regular 
then
\begin{equation}\label{inequalityA3}\operatorname{codim} U_{\eta}(n) \geq n+1.\end{equation}
 If $\eta$ is 0-regular, then
 \begin{equation}\label{inequalityA2}\operatorname{codim} U_{\eta}(n)\geq n.\end{equation}
}

\smallskip
We prove a similar result  for Oda varieties in Theorem \ref{mainO}.

We consider various examples in Section \ref{examples}; in
Section \ref{smallCod} we consider  the components of the loci $U_\eta(n)$ which contain  a line, defined as a rational curve that is ``minimal'' in a suitable sense (i.e., its intersection with the ample class $\eta$ is 1). We show that the codimension of these components  is $n+1$, as in the classical case. To do that, we prove a Severi-type vanishing theorem, and, in a singular case, we study the Hilbert schemes of lines. 

\bigskip
{\bf Acknowledgements.} We thank R.\ Lazarsfeld,  F.\ Perroni  and C.\ Voisin for useful discussions, and the  referee for  many helpful suggestions. This paper was partly written while the first author was visiting UFSC in Florian\'opolis, Brazil,  supported by the  CNPq grant 310002/2015-0 and the University of Pennsylvania. He gratefully acknowledges CNPq and the Department of Mathematics at UFSC, especially its Geometry group, for the hospitality and the Department of Mathematics at the University of Pennsylvania.

\bigskip\section{Preliminaries}\label{generalities}

\subsection{Divisors  in toric varieties and the Picard number}\label{divisors}
  Let $M$ be a free abelian group of rank $r$, let $N=\Hom(M,\Z)$, and  $N_\R=N\otimes_\Z\R$.
  Let $\Sigma$ be a  rational simplicial complete  $r$-dimensional fan in $N_\R$. It  defines a complete toric variety $\var $ of dimension $r$  having only  Abelian quotient singularities. 
$\var$  is said to be {\it simplicial}, and   is  an orbifold.  

Let  $Cl(\var)$ be the group of Weil divisors in $\var$ modulo rational equivalence,
and  $\operatorname{Pic}(\var)$  the group of line bundles on $\var$ modulo isomorphism; both are
finitely generated Abelian groups, and $\operatorname{Pic}(\var)$ is   free. Since $\Sigma$ is simplicial,
 $\var$ is $\Q$-factorial,   i.e., the natural inclusion
$\operatorname{Pic}(\var) \hookrightarrow Cl(\var)$ becomes an isomorphism after
tensoring by $\Q$. The rank of the two groups  is also the {\it Picard number}, the
rank of the N\'eron-Severi group of $\var$.
Recall that the  N\'{e}ron-Severi group of a variety $Y$
 is  the image of the Picard group in the second cohomology group with integer coefficients.
One can define its rank    as 
$\dim_{\Q} NS(Y) \otimes  _{\Z} \Q = \dim_{\Q} (H^{2}(Y,\Q) \cap  H^{1,1}(Y,\C))$.

Let $S$ be the   {\it Cox ring}, that is, the algebra over $\C$ generated by the homogeneous coordinates associated with the rays of $\Sigma$; so $S = \C[x_\rho ]$, where $\rho$ runs over the rays of the fan \cite{Cox95}.   This is a generalization of the coordinate ring of $\var={ß\mathbb P}^n$,  in which case the Cox ring is    $\C[x_0, \cdots, x_n]$.  Recall that  each ray $\rho$  determines a Weil divisor $D_\rho$; a monomial $ \prod x_\rho^{a_\rho}$ determines a divisor $D= \sum_\rho a_{\rho}D_\rho$ and $\deg(D)= \deg (\prod x_\rho^{a_\rho}) \in Cl(\var)$ induces a grading  of  $S =\oplus_{\gamma\in Cl(\var)}S_\gamma$.

 \subsection{Surfaces in $\var$}\label{hypersurfaces} Let  $D$ be a  nef divisor of class $\beta$  in $Cl(\var)$; then  $\cO_\var(D)$ is generated by its global sections \cite[Th.~1.6]{Mavlyutov-semi}.
 Let $X$ be a  surface  in $\var$ whose class  in $Cl(\var)$ is $\beta$;
if $X$ is general, it is {\it quasi-smooth}, that is, its only singularities are those inherited from $\var$ \cite[Lemma 6.6, 6.7]{MulletToric}. In particular, $X$ is an orbifold if $\var$ is simplicial.

Let $f$ be a section of the line bundle
$\cO_\var(D)$  such that $X$ is the locus $f=0$. The {\it 
 ideal } $J(f)$ is the ideal in $S$ generated by the derivatives of $f$, and the {\it Jacobian ring} $R(f)$ is defined as $R(f)=S/J(f)$. It is naturally graded by the class group $Cl(\var)$.

 \subsection{The dualizing sheaf and vanishing theorems} \label{Zariski} Let $\mathbb P_{\Sigma,0}$ be the smooth locus of $\var$, and let $j\colon \mathbb P_{\Sigma,0} \to \var$ be the corresponding open immersion. The sheaf $\widehat\Omega_\var^p$ of Zariski $p$-forms on  $\var$ is the sheaf
$j_\ast\Omega^p_{\var,0}$, where $ \Omega^p_{\var,0} $ is the sheaf of $p$-differentials on $\mathbb P_{\Sigma,0}$. The sheaf $\omega_\var = \widehat\Omega_\var^r$ is the dualizing sheaf of $\var$. It is a reflexive coherent sheaf of rank 1, and determines a class in $Cl(\var)$ that we shall denote $-\beta_0$. If $\beta$ is the class of a Weil divisor, we shall denote by  $\cO_\var(\beta-\beta_0)$ the sheaf $\cO_\var(\beta)\otimes\omega_\var$.

If $\beta$ is an ample class in $\operatorname{Pic}(\var)$,  the Bott-Danilov-Steenbrink vanishing theorem \cite[Ch.~3]{Oda88} says:
\begin{equation}\label{bott}
H^q(\var,\cO_\var(\beta)\otimes\widehat\Omega_\var^p) = 0
\end{equation}
for   $1\le q \le r$ and $0\le p \le r$.

Toric Serre duality (\cite{CoxLSh}, Theorem 9.2.10) then implies

\begin{equation}\label{bottdual}
H^q(\var,\cO_\var(-\beta)\otimes\widehat \Omega_\var^{p}) = 0
\end{equation}
for   $0\le q\le r-1$ and $0\le p \le r$.

If $\beta$ is Cartier and nef, Mavlyutov's vanishing theorem \cite{Mavly-cohom} 
holds since $\var$ is complete and simplicial,  and one has
\begin{equation}\label{MavlyutovNef}
H^q(\var,\cO_\var(\beta)\otimes\widehat\Omega_\var^p) = 0
\end{equation}
for  $q> p$ or  $p > q+ \dim P_{\beta}$, where
$P_{\beta}$ is the polytope associated with the line bundle
$\cO_\var(\beta)$ (see also  \cite{CoxLSh}, Theorem 9.3.3). For $p=0$ this reduces
to Demazure's  vanishing theorem \cite[Thm.~9.2.3]{CoxLSh}, that is,
$H^q(\var,\cO_\var(\beta) ) = 0 $
for $q > 0$, under the only hypothesis that $\beta$ is nef.

 
\bigskip
\section{Regularity and Oda varieties}\label{NefCMReg}

The    proofs, based on the infinitesimal variation of Hodge structure, of the  Noether-Lefschetz theorem,  and of the  lower bound for the codimension of the Noether-Lefschetz locus in the moduli space of surfaces in $\Pt$,   use three fundamental ingredients:
the (-1)-regularity of the hyperplane bundle $\cO_{\Pt}(1)$, the fact that for any $n,m\in\mathbb N$, the multiplication morphism $S_n\otimes S_m \to S_{n+m}$
is surjective, and  various  cohomology vanishings.
 While these properties do not hold in general for a toric variety, we find that suitable generalizations of the above hypotheses are satisfied by a large class of toric varieties.
 These generalizations are the key for  the proof of Theorem \ref{main}. 
 
\subsection {Castelnuovo-Mumford regularity for toric varieties}\label{CMreg}
Let $X$ be a projective variety and $L$ an ample and globally generated  line bundle on it.
\begin{defin} A coherent $\cO_X$-module $\cF$ is $m$-regular with respect to $L$ if
$$H^q(X,\cF\otimes L^{m-q})=0$$ for all $q>0$.
\end{defin}

If $L$ is an ample and globally generated  line bundle which is $m$-regular with respect to itself, we shall just say that it is $m$-regular.
A line bundle on a complete toric variety is nef if and only if it is globally generated \cite[Th.~1.6]{Mavlyutov-semi}, and we have the following:

\begin{thm}[\cite{Lazarsfeld}, Thm~1.8.5] \label{laza}  Let $\var$ be a projective  toric variety. 
If a locally free $\cO_\var$-module $\cF$ is $m$-regular with respect to an ample line bundle $L$, then for all $k \geq 0$,
\begin{enumerate} \item
 $\cF \otimes L^{m+k}$ is generated by  global sections; \item the map
\begin{equation}\label{surj} H^0(\var,\cF \otimes L^m) \otimes H^0(\var,L^k) \to H^0(\var,\cF\otimes L^{k+m})
\end{equation}
is surjective;
\item $\cF$ is $(m+k)$-regular.
\end{enumerate} \end{thm}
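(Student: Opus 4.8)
The plan is to reduce everything to a single implication---``$m$-regular $\Rightarrow$ $(m+1)$-regular, together with the $k=1$ cases of (i) and (ii)''---from which the general statements follow by iterating in $k$; the engine is Mumford's hyperplane-section induction on $r=\dim\var$, carried out for projective \emph{schemes} so that singular, reducible or non-reduced sections cause no trouble. Fix a nonzero $s\in H^0(\var,L)$, let $D=\{s=0\}$ be the associated effective Cartier divisor and $j\colon D\hookrightarrow\var$ the inclusion. Since $\cF$ is locally free, multiplication by $s$ is injective, and for every $a\in\Z$ we obtain the exact sequence
\begin{equation}\label{eq:restr}
0\to \cF\otimes L^{a-1}\xrightarrow{\,\cdot s\,}\cF\otimes L^{a}\to \cF|_D\otimes (L|_D)^{a}\to 0,
\end{equation}
in which $\cF|_D$ is locally free on $D$ and $L|_D$ is ample and globally generated on the $(r-1)$-dimensional scheme $D$. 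The base case $r=0$ is trivial: there is no higher cohomology, the global sections fill all fibres, and multiplication is tautologically surjective.

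First I would record the sub-lemma that $m$-regularity descends to $D$: if $\cF$ is $m$-regular on $\var$ then $\cF|_D$ is $m$-regular on $D$ for \emph{every} $D\in|L|$. This is immediate from the cohomology sequence of \eqref{eq:restr} with $a=m-q$, since for $q\ge 1$ both $H^q(\cF\otimes L^{m-q})$ and $H^{q+1}(\cF\otimes L^{m-q-1})$ vanish by $m$-regularity, forcing $H^q(\cF|_D\otimes(L|_D)^{m-q})=0$. Granting this, (iii) follows: by the inductive hypothesis $\cF|_D$ is $(m+1)$-regular, so in \eqref{eq:restr} with $a=m+1-q$ the two outer terms $H^q(\cF\otimes L^{m-q})$ and $H^q(\cF|_D\otimes(L|_D)^{m+1-q})$ vanish for $q\ge 1$, whence $H^q(\cF\otimes L^{m+1-q})=0$ and $\cF$ is $(m+1)$-regular; iterating yields $(m+k)$-regularity for all $k\ge 0$.

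For (ii) I would treat $k=1$ first and then bootstrap. From \eqref{eq:restr} with $a=m$, the vanishing $H^1(\cF\otimes L^{m-1})=0$ shows that the restriction $H^0(\cF\otimes L^{m})\to H^0(\cF|_D\otimes(L|_D)^{m})$ is surjective; and from $0\to\cO_\var\xrightarrow{\,\cdot s\,}L\to L|_D\to 0$ together with the Demazure vanishing $H^1(\var,\cO_\var)=0$---this is where the toric hypothesis enters decisively---the restriction $H^0(\var,L)\to H^0(D,L|_D)$ is surjective as well. Now compare the multiplication maps on $\var$ and on $D$: given $t\in H^0(\cF\otimes L^{m+1})$, its restriction to $D$ lies in the image of the $D$-multiplication by the inductive hypothesis applied to the $m$-regular sheaf $\cF|_D$; lifting the factors through the two surjections just named and subtracting, the difference restricts to $0$ on $D$, so by exactness of \eqref{eq:restr} (with $a=m+1$) it is $s$ times a section of $\cF\otimes L^{m}$, which places $t$ in the image of the multiplication. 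The general $k$ then follows by induction on $k$: the composite of the inductive multiplication $H^0(\cF\otimes L^{m})\otimes H^0(L^{k-1})\to H^0(\cF\otimes L^{m+k-1})$ (tensored with $H^0(L)$) and the $k=1$ multiplication $H^0(\cF\otimes L^{m+k-1})\otimes H^0(L)\to H^0(\cF\otimes L^{m+k})$ for the $(m+k-1)$-regular sheaf $\cF$ is surjective and factors through $H^0(\cF\otimes L^{m})\otimes H^0(L^{k})$.

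Finally, for (i) it suffices to show $\cF\otimes L^{m}$ is globally generated, as tensoring with the globally generated $L^{k}$ then gives (i) for all $k$. I would verify generation pointwise: for $x\in\var$, global generation of $L$ furnishes $s\in H^0(L)$ with $s(x)=0$, so $x\in D=\{s=0\}$; since $x\in D$, the fibre of $\cF\otimes L^{m}$ at $x$ coincides with that of $\cF|_D\otimes(L|_D)^{m}$, which is globally generated on $D$ by the inductive hypothesis, and the restriction $H^0(\cF\otimes L^{m})\to H^0(\cF|_D\otimes(L|_D)^{m})$ is surjective (again by $H^1(\cF\otimes L^{m-1})=0$), so the global sections of $\cF\otimes L^{m}$ already surject onto the fibre at $x$. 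I expect the main difficulty to be the bookkeeping for the two nested inductions while keeping the hyperplane sections honest: one must run every step for an arbitrary---possibly singular, reducible or non-reduced---$D\in|L|$, check that $\cF|_D$ stays locally free and $L|_D$ stays ample and globally generated, and, above all, invoke the toric vanishing $H^1(\var,\cO_\var)=0$ in place of the classical $H^1(\mathbb P^n,\cO)=0$ to lift sections of $L$ to $\var$; this lifting is exactly what the surjectivity in (ii) rests on.
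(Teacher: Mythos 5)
The paper itself gives no proof of this statement: it is quoted from Lazarsfeld's book (Theorem 1.8.5), and the proof behind that citation is not a hyperplane-section induction on $\var$ but a reduction to projective space. Your proposal has a genuine structural gap exactly where you say the toric hypothesis ``enters decisively.'' You set up the induction ``for projective schemes,'' so that it can be applied to an arbitrary member $D\in|L|$; but the key lifting step in part (ii) --- surjectivity of $H^0(\var,L)\to H^0(D,L|_D)$ --- is justified by Demazure vanishing $H^1(\var,\cO_\var)=0$, which is a property of the toric top level only. A member of $|L|$ on a toric variety is not toric, and iterated sections need not satisfy $H^1(\cO)=0$: take $\var=\Pt$, $L=\cO_\Pt(2)$; then $D$ is a quadric surface and the next section $C=D\cap D'$ is an elliptic quartic curve with $H^1(C,\cO_C)=\C\neq 0$ (or, in dimension two, $\var=\Pd$, $L=\cO_\Pd(3)$, $D$ a plane cubic --- the break then occurs at the very first step). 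At that stage your method for (ii) requires lifting sections of $L|_C$ from the zero scheme of a section back to $C$, which needs precisely the vanishing $H^1(C,\cO_C)=0$ that fails. So the inductive hypothesis for (ii) cannot be established on $D$ by your argument: if you state the induction for toric varieties, it does not apply to $D$; if you state it for all projective schemes, the Demazure step is unavailable. Either way the induction does not close. (The descent sub-lemma and part (iii) are essentially fine, as they use only the restriction sequence and the regularity vanishings; but note also that on a non-integral $D$ you must choose the section to avoid the associated points of $\cF|_D$ --- ``any nonzero $s$'' no longer guarantees injectivity of multiplication.)

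The correct argument --- the one behind the paper's citation --- avoids induction on $\var$ entirely. Since $L$ is ample and globally generated, it defines a finite morphism $\phi\colon\var\to\mathbb P=\mathbb P\bigl(H^0(\var,L)^\ast\bigr)$ with $\phi^\ast\cO_{\mathbb P}(1)\simeq L$. Because $\phi$ is finite, $H^i(\var,\cF\otimes L^a)\simeq H^i(\mathbb P,\phi_\ast\cF\otimes\cO_{\mathbb P}(a))$ by the projection formula and the vanishing of higher direct images, so $\cF$ is $m$-regular with respect to $L$ if and only if $\phi_\ast\cF$ is $m$-regular on $\mathbb P$ in the classical sense. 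Mumford's theorem on projective space (Lazarsfeld, Thm.~1.8.3 --- where your induction does close, because hyperplane sections of $\mathbb P^N$ are again projective spaces with $H^1(\cO)=0$) then gives (i)--(iii) for $\phi_\ast\cF$, and these descend to $\var$: (iii) is immediate from the cohomology identification; (ii) follows because the multiplication on $\mathbb P$ factors through $H^0(\var,\cF\otimes L^m)\otimes H^0(\var,L^k)\to H^0(\var,\cF\otimes L^{m+k})$; and (i) follows from the surjectivity of the counit $\phi^\ast\phi_\ast\cF\to\cF$, valid for any finite morphism. In particular, no toric input is needed anywhere --- the theorem holds for every projective scheme --- and reading $H^1(\var,\cO_\var)=0$ as the decisive ingredient is what led your proof astray.
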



\begin{corol}
If $L$ is 0-regular, for every $m$-regular locally free $\cO_\var$-module $\cF$, there is a projective resolution
\begin{equation}\label{resolution}
\dots\to  \bigoplus (L^{-m-1}) \to  \bigoplus (L^{-m}) \to \cF \to 0 \,.\end{equation}
\end{corol}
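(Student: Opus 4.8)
The plan is to build \eqref{resolution} one step at a time, using Theorem~\ref{laza} to produce each surjection and then checking that the kernel gains one unit of regularity, so that the construction can be iterated. Since $\cF$ is $m$-regular, part (i) of Theorem~\ref{laza} (with $k=0$) shows that $\cF\otimes L^m$ is globally generated. I would take the evaluation map $H^0(\var,\cF\otimes L^m)\otimes_\C\cO_\var\to\cF\otimes L^m$; choosing a basis of $H^0(\var,\cF\otimes L^m)$ identifies the source with a direct sum $\bigoplus\cO_\var$, and since $H^0(\var,\cO_\var)=\C$ this map is the identity on global sections, in particular surjective there. Twisting by $L^{-m}$ produces a surjection $\bigoplus L^{-m}\to\cF$ whose kernel $\mathcal K$ sits in
\[
0\to\mathcal K\to\bigoplus L^{-m}\to\cF\to 0 .
\]
Because $\cF$ is locally free this sequence splits locally, so $\mathcal K$ is again locally free and the step can in principle be repeated.

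The heart of the argument is to show that $\mathcal K$ is $(m+1)$-regular; then applying the same construction to the pair $(\mathcal K,m+1)$ and splicing the resulting surjections yields \eqref{resolution}. Tensoring the displayed sequence by $L^{m+1-q}$ and passing to cohomology gives, for each $q>0$, the exact piece
\[
H^{q-1}(\var,\cF\otimes L^{m+1-q})\to H^q(\var,\mathcal K\otimes L^{m+1-q})\to\bigoplus H^q(\var,L^{1-q}).
\]
For $q\ge 2$ the left-hand term equals $H^{q-1}(\var,\cF\otimes L^{m-(q-1)})$, which vanishes by $m$-regularity of $\cF$, while the right-hand term vanishes because $L$ is $0$-regular, i.e. $H^q(\var,L^{1-q})=0$ for $q>0$. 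Hence $H^q(\var,\mathcal K\otimes L^{m+1-q})=0$ for all $q\ge 2$.

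The case $q=1$ is the one delicate point, and I expect it to be the only place requiring care. Here the right-hand term $\bigoplus H^1(\var,\cO_\var)$ still vanishes ($0$-regularity of $L$, equivalently Demazure vanishing for the trivial class), but the left-hand term $H^0(\var,\cF\otimes L^m)$ does not, so vanishing alone is not enough. Instead I would use that the connecting homomorphism out of $H^0(\var,\cF\otimes L^m)$ is preceded in the long exact sequence by the map $H^0(\var,\bigoplus\cO_\var)\to H^0(\var,\cF\otimes L^m)$ induced by the surjection above, which was arranged to be surjective on global sections. Exactness then forces the connecting map to vanish, so $H^1(\var,\mathcal K\otimes L^m)=0$ and $\mathcal K$ is $(m+1)$-regular. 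Iterating this construction and splicing the surjections produces the (possibly infinite) resolution \eqref{resolution}, all of whose terms are direct sums of copies of the $L^{-m-k}$.
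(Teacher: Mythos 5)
Your proof is correct and is essentially the paper's argument: the paper simply cites the proof of \cite[Prop.~1.8.8]{Lazarsfeld} (Mumford's classical argument) combined with Theorem~\ref{laza}, and what you have written out — the evaluation surjection $\bigoplus L^{-m}\to\cF$, the verification that its locally free kernel is $(m+1)$-regular using $m$-regularity of $\cF$, $0$-regularity of $L$, and surjectivity on global sections to kill the connecting map at $q=1$, then iteration — is exactly that argument adapted to the toric setting. No gaps; the delicate $q=1$ step is handled precisely as in the cited source.
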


\begin{proof} 
The proof of \cite[Prop.~1.8.8]{Lazarsfeld}, together with Theorem \ref{laza},  can be applied.
\end{proof}

\begin{rem} Note that $L^{-m-j}$ is $(m+j+1)$-regular if $L$ is 0-regular, and $(m+j)$-regular if $L$ is 
$(-1)$-regular. \end{rem}

 If $\var = \mathbb P^n$ and $\cF$ is locally free and $1$-regular with respect to $\cO_{\mathbb P^n}(1)$, then $\cF^{\otimes p}$ is $p$-regular
a consequence of the fact that $\cO_{\mathbb P^n}(1)$ is (-1)-regular \cite[Prop.~1.8.9]{Lazarsfeld}. The same holds on $(\var,  \eta)$ with $\eta$ ample and (-1)-regular:

\begin{prop}\label{tensor}  Let  $\var$ be a    toric variety, and $L$ a $(q-1)$-regular  ample  line bundle on it,
 $q\ge 0$.
 Il $\cF$ is locally free and $1$-regular with respect to $L$, then $\cF^{\otimes p}$ is $(p+q)$-regular.
 \label{regular}
\end{prop}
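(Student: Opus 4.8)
The plan is to induct on $p$, reducing the regularity of $\cF^{\otimes p}$ to that of $\cF^{\otimes(p-1)}$ by resolving a single factor $\cF$ through line bundles. Note first that $L$, being ample on a complete toric variety, is globally generated, so Theorem~\ref{laza} and its Corollary apply. For the base case $p=1$ there is nothing to do beyond Theorem~\ref{laza}: since $\cF$ is $1$-regular, part (iii) of that theorem (with $m=1$ and $k=q\ge 0$) gives that $\cF$ is $(1+q)$-regular, and $1+q=p+q$ when $p=1$.

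For the inductive step, assume $\cF^{\otimes p}$ is $(p+q)$-regular. Because $\cF$ is $1$-regular and $L$ is $0$-regular (which holds whenever $L$ is $(-1)$- or $0$-regular, the cases of interest), the Corollary to Theorem~\ref{laza} produces a resolution
\[
\cdots \to \bigoplus L^{-1-j} \to \cdots \to \bigoplus L^{-2} \to \bigoplus L^{-1} \to \cF \to 0,
\]
whose term in homological degree $j$ is a direct sum of copies of $L^{-1-j}$. Tensoring with the locally free (hence flat) sheaf $\cF^{\otimes p}$ keeps the sequence exact and yields a resolution of $\cF^{\otimes(p+1)}$ with $j$-th term $\bigoplus \cF^{\otimes p}\otimes L^{-1-j}$. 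Splitting this into short exact sequences of syzygies and dimension-shifting through the long exact cohomology sequence (which terminates because $\var$ is finite-dimensional), the desired vanishing $H^i\big(\cF^{\otimes(p+1)}\otimes L^{(p+q+1)-i}\big)=0$ for $i>0$ follows as soon as $H^{i+j}\big(\cF^{\otimes p}\otimes L^{(p+q)-i-j}\big)=0$ for all $i>0$ and $j\ge 0$. Writing $s=i+j>0$, the twist is $L^{(p+q)-s}$, so this is precisely the $(p+q)$-regularity of $\cF^{\otimes p}$, which is the inductive hypothesis; hence $\cF^{\otimes(p+1)}$ is $(p+q+1)$-regular.

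I expect the main obstacle to be the very input I invoked: the existence of a line-bundle resolution of $\cF$ whose twists grow by exactly one at each step, equivalently the control of the Castelnuovo--Mumford regularity of the successive syzygy bundles $\mathcal M_j=\ker(\bigoplus L^{-1-(j-1)}\to \mathcal M_{j-1})$. The reduction above closes only because the $j$-th term is $\bigoplus L^{-1-j}$ (so the twist exponent $a_j=j+1$ is small enough); tracking $\mathcal M_j$ through its evaluation sequence requires bounding terms of the form $H^s(L^{m-s})$ and $H^{s-1}(\cF\otimes L^{m-s})$, and it is exactly here that the regularity hypothesis on $L$ enters, via the Remark following the Corollary (that $L^{-m-j}$ is $(m+j+1)$-regular when $L$ is $0$-regular, and $(m+j)$-regular when $L$ is $(-1)$-regular). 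For $L$ that is $(-1)$- or $0$-regular the twists stay controlled and the argument runs immediately; relaxing to a merely $(q-1)$-regular $L$ for larger $q$ is the delicate point, since one must then carry the sharper regularity estimates for the syzygies through the induction to keep the term twists from outgrowing the bound $a_j\le j+1$ used above.
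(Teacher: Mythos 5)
Your proof is correct, and valid on the same effective domain as the paper's own proof ($q\in\{0,1\}$, i.e.\ $L$ at least $0$-regular), but it takes a genuinely different route, and the difference matters. Both arguments rest on the resolution \eqref{resolution} and on splitting it into short exact sequences and chasing cohomology. The paper, however, tensors $p$ copies of the resolution of $\cF$ together, obtaining a resolution of $\cF^{\otimes p}$ whose $j$-th term is $\bigoplus L^{-p-j}$, so its chase must kill the groups $H^s(\var,L^{q-s})$ for $s>0$ --- and that vanishing is precisely the $(q-1)$-regularity of $L$. This is the one place where the $(-1)$-regular versus $0$-regular dichotomy enters the paper's argument, and why its conclusion degrades from $p$ to $p+q$. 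You instead induct on $p$, resolving a single factor and tensoring by $\cF^{\otimes p}$, so your chase is closed by the inductive hypothesis $H^s(\cF^{\otimes p}\otimes L^{(p+q)-s})=0$; apart from guaranteeing that the resolution exists ($0$-regularity of $L$), no regularity hypothesis on $L$ is invoked at all. This is Lazarsfeld's own proof of \cite[Prop.~1.8.9]{Lazarsfeld} transplanted to the toric setting, and it is in fact sharper than the paper's argument: run with target regularity $p$ instead of $p+q$ (the base case then needs only the given $1$-regularity of $\cF$), your induction shows that $\cF^{\otimes p}$ is $p$-regular whenever $L$ is merely $0$-regular. That is stronger than the stated conclusion for $q=1$, and if followed through Lemma \ref{l1} and Proposition \ref{koszul} it would upgrade the $0$-regular bound in Theorem \ref{main} from $n$ to $n+1$; this deserves careful checking, but I see no gap in it. Finally, both proofs share the same caveat for $q\ge 2$: $(q-1)$-regularity of $L$ does not imply the $0$-regularity needed for \eqref{resolution}. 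You flag this honestly, whereas the paper's opening reduction (``it is enough to show the result when $L$ is $(-1)$-regular'') glosses over it --- harmlessly, since only $q\in\{0,1\}$ is used in the sequel.
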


\begin{proof} It is enough to show the results when $L$ is (-1) regular.
Since $\cF$ is locally free, by tensoring $p$ copies of the resolution \eqref{resolution} for $m=1$ we obtain a resolution
$$ \dots \bigoplus(L^{-p-1}) \to  \bigoplus (L^{-p}) \to \cF^{\otimes p}  \to 0 \,.$$
By splitting this into short exact sequences and taking cohomology one gets the result.
\end{proof}

\begin{prop}\label{conditions}  Let $\var$ be an $r$-dimensional   toric   variety,
and $\eta$ an ample Cartier divisor.
\begin{enumerate}\setlength\itemsep{3pt}
\item  $\eta$  is $0$-regular if and only if  $H^0(\var,\cO_\var((r-1)\eta-\beta_0))=0$.
\item $\eta$  is $(-1)$-regular if and only if  $H^0(\var,\cO_\var(r\eta-\beta_0))=0$.
\end{enumerate}
\end{prop}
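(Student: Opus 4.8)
The plan is to unwind the definition of regularity ``with respect to $L$ itself'' and then observe that, among the $r$ cohomology groups whose vanishing it demands, all but the top-degree one vanish unconditionally on a complete simplicial toric variety; regularity is therefore controlled by a single group $H^r$, which toric Serre duality turns into the asserted $H^0$. Concretely, taking $\cF=L=\cO_\var(\eta)$ in the definition of $m$-regularity, the statement that $\eta$ is $m$-regular with respect to itself reads
\begin{equation*} H^q(\var,\cO_\var((m+1-q)\eta))=0 \quad\text{for all } q=1,\dots,r. \end{equation*}
For $m=0$ this is $H^q(\var,\cO_\var((1-q)\eta))=0$, and for $m=-1$ it is $H^q(\var,\cO_\var(-q\eta))=0$. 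Note that $\eta$, being ample and Cartier, is globally generated, since nef is equivalent to globally generated on a complete toric variety, so the hypotheses of the definition are met.

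Next I would dispose of the intermediate degrees $1\le q\le r-1$. When $m=0$ and $q=1$ the twist is trivial and $H^1(\var,\cO_\var)=0$ by Demazure vanishing applied to the nef class $0$. In every remaining case with $q\le r-1$ the twisting sheaf is $\cO_\var(-\gamma)$ with $\gamma$ a positive multiple of $\eta$, hence ample and Cartier, so the Serre-dual Bott--Danilov--Steenbrink vanishing \eqref{bottdual} with $p=0$ gives $H^q(\var,\cO_\var(-\gamma))=0$. Thus, in both parts, the only group that is not automatically zero is the top one, $H^r$.

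Finally I would evaluate $q=r$ by toric Serre duality, which identifies $H^r(\var,\cO_\var(\gamma))$ with the dual of $H^0(\var,\cO_\var(-\gamma)\otimes\omega_\var)=H^0(\var,\cO_\var(-\gamma-\beta_0))$. For $0$-regularity one has $\gamma=-(r-1)\eta$, producing the condition $H^0(\var,\cO_\var((r-1)\eta-\beta_0))=0$; for $(-1)$-regularity one has $\gamma=-r\eta$, producing $H^0(\var,\cO_\var(r\eta-\beta_0))=0$. Since the intermediate groups vanish unconditionally, regularity holds precisely when this single $H^0$ does, which is exactly the stated equivalence. There is no deep obstacle here; the care lies entirely in the $+1$ shift coming from regularity with respect to $L$ itself, in checking that each twist in degrees $q<r$ is genuinely ample so that \eqref{bottdual} applies (with the trivial-bundle degree handled separately by Demazure), and in invoking Serre duality correctly for the rank-one reflexive sheaves $\cO_\var(\gamma)$ on the orbifold $\var$.
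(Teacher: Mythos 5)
Your proof is correct and is exactly the paper's argument, just written out in full: the paper's proof is the one-line remark that the claim ``follows from Serre duality and the Bott--Danilov--Steenbrink vanishing,'' which is precisely your reduction --- the intermediate groups $H^q$ with $1\le q\le r-1$ vanish by the Serre-dual form of Bott--Danilov--Steenbrink (with the trivial twist handled by Demazure vanishing), leaving only the top group $H^r$, which Serre duality converts into the stated $H^0$ condition. Nothing to add beyond noting that your careful tracking of the $+1$ shift in $\cF\otimes L^{m-q}=L^{m+1-q}$ is indeed the only place where one could slip.
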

\begin{proof} Follows from Serre duality and the Bott-Danilov-Steenbrink vanishing.
\end{proof}

 \begin{thm} \label{onlyGore} The only Gorenstein toric threefolds with Picard number 1 that carry a 0-regular ample line bundle are $ \mathbb P[1,1,2,2]$ and $\mathbb P^3$.
\end{thm}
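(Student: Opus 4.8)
The plan is to convert $0$-regularity into a condition on the Fano index and then enumerate. Since $\var$ is Gorenstein with Picard number $1$, the group $\operatorname{Pic}(\var)=\Z\,H$ is free of rank one, generated by a primitive ample class $H$, and $\beta_0=-K_\var=\sum_\rho D_\rho$ is an effective nonzero \emph{Cartier} class; hence $\beta_0=\iota H$ for an integer $\iota\ge1$, the Fano index of the Gorenstein toric Fano threefold $\var$. For an ample class $\eta=mH$ with $m\ge1$, Proposition \ref{conditions}(i) (with $r=3$) says that $\eta$ is $0$-regular if and only if $H^0(\var,\cO_\var((2m-\iota)H))=0$. On a variety of Picard number $1$ the effective cone is the ray $\R_{\ge0}H$, so $\cO_\var(qH)$ has a nonzero global section exactly when $q\ge0$ (it is ample, hence globally generated, for $q\ge1$). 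Therefore $\var$ carries a $0$-regular ample line bundle if and only if $2m<\iota$ for some $m\ge1$, i.e.\ if and only if $\iota\ge3$.

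Next I would bound the index and dispose of the extremal case. The identical computation with Proposition \ref{conditions}(ii) shows that an ample $(-1)$-regular line bundle exists precisely when $\iota\ge4$; combined with Theorem \ref{A3}, which identifies $\mathbb P^n$ as the only toric $n$-fold admitting such a bundle, this forces $\var\cong\mathbb P^3$ whenever $\iota\ge4$, and in particular handles any fake/quotient structure in that case. For the upper bound I realise $\var$ as a well-formed weighted projective space $\mathbb P[a_0,\dots,a_3]$: then $H=\cO_\var(\ell)$ with $\ell=\operatorname{lcm}(a_i)$, $\beta_0=\cO_\var(\sigma)$ with $\sigma=\sum_i a_i$, and $\iota=\sigma/\ell$. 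Since each $a_i$ divides $\ell$ we have $a_i\le\ell$, hence $\sigma\le4\ell$ and $\iota\le4$. Thus only $\iota=4$ (giving $\mathbb P^3$, the case $(1,1,1,1)$) and $\iota=3$ remain.

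The heart of the argument is the case $\iota=3$, that is $\sigma=3\ell$, where pure arithmetic finishes the proof. Ordering $a_0\le a_1\le a_2\le a_3$ and using $a_i\mid\ell$: if $a_3\neq\ell$ then every $a_i\le\ell/2$ and $\sigma\le2\ell<3\ell$, so $a_3=\ell$; cancelling gives $a_0+a_1+a_2=2\ell$, and if $a_2\neq\ell$ then $a_0,a_1,a_2\le\ell/2$ force $\sigma\le 3\ell/2<2\ell$, so $a_2=\ell$ as well. Finally $a_0+a_1=\ell$ with $a_0,a_1\mid\ell$, together with well-formedness applied to the triples $(a_0,a_2,a_3)$ and $(a_1,a_2,a_3)$ — which gives $\gcd(a_0,\ell)=a_0=1$ and $\gcd(a_1,\ell)=a_1=1$ — yields $a_0=a_1=1$ and $\ell=2$. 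Hence $(a_0,\dots,a_3)=(1,1,2,2)$ and $\var\cong\mathbb P[1,1,2,2]$.

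The main obstacle is precisely this index-$3$ step. Two things must be secured: that no weight system other than $(1,1,2,2)$ solves $\sum_i a_i=3\operatorname{lcm}(a_i)$, which the divisibility reduction above settles by finite arithmetic; and that no genuinely fake (non-simplex) Picard-number-$1$ Gorenstein quotient of index $3$ slips in. The extremal index-$4$ case is clean because Theorem \ref{A3} already excludes fake quotients there, but for $\iota=3$ one must either invoke a Kobayashi--Ochiai-type rigidity (index-$3$ Gorenstein Fano threefolds are quadrics, and the only toric one of Picard number $1$ is $\mathbb P[1,1,2,2]$) or run the lattice analysis for the four rays $v_0,\dots,v_3$ directly. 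This is where the care is needed; the rest of the proof is the formal reduction to the relation $\sum_i a_i=3\operatorname{lcm}(a_i)$ via Proposition \ref{conditions}.
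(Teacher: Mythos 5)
Your first two reductions are sound and coincide with the paper's: Proposition \ref{conditions}(i) converts the existence of a $0$-regular ample class into the condition $\iota\ge 3$ on the Fano index (the paper's $\ell\ge 3$), and the case $\iota\ge 4$ is correctly settled via Proposition \ref{conditions}(ii) and Theorem \ref{A3} (whose proof is itself Ogata--Zhao's Theorem~1, the reference the paper quotes at this point). Your divisibility arithmetic for $\sum_i a_i=3\operatorname{lcm}(a_i)$ on well-formed weights is also correct.

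The genuine gap is the step ``I realise $\var$ as a well-formed weighted projective space.'' A simplicial toric threefold with Picard number $1$ is in general only a \emph{fake} weighted projective space, i.e.\ a quotient $\mathbb P[a_0,\dots,a_3]/G$ by a finite abelian group acting freely in codimension one: the fan has four rays spanning a simplex, but the ambient lattice may be strictly larger than the lattice generated by the primitive ray generators. Your weight arithmetic says nothing about such quotients, and your own fallback suggestions do not close the case $\iota=3$. The ``Kobayashi--Ochiai-type'' statement you invoke --- that the only toric index-$3$ Gorenstein Fano threefold of Picard number $1$ is $\mathbb P[1,1,2,2]$ --- is precisely the assertion still to be proved at that stage, so quoting it is circular; worse, without the standing simpliciality convention it is literally false, since the cone over the quadric surface is a Gorenstein toric Fano threefold of index $3$ and Picard number $1$ (it is the rank-$4$ quadric in $\mathbb P^4$, while $\mathbb P[1,1,2,2]$ is the rank-$3$ one), and the same computation as in Proposition \ref{conditions} shows its ample generator is $0$-regular. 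And ``running the lattice analysis for the four rays directly'' is exactly the nontrivial remaining work, not a proof of it. This is the piece the paper outsources to the literature: after the identical index computation, the paper cites Ogata--Zhao's Theorem~3 --- the classification of Gorenstein toric Fano $n$-folds of index $n$ --- which covers fake quotients and yields $\mathbb P[1,1,2,2]$ as the only simplicial example of Picard number one. In short, your argument is complete for honest weighted projective spaces but leaves open the fake case, which is the crux of the index-$3$ classification.
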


\begin{proof}  Write $\beta_0=\ell \eta$, where $\eta$ is an ample Cartier divisor; then $2\eta- \beta_0=(2-\ell) \eta$, and  $\eta$ is 0-regular if and only if $ \ell \geq 3$ (Proposition \ref{conditions}).     Then $\var$  is  Fano and $\ell$ is the index of  $\var$ as in \cite{OgataZhao}. Ogata and Zhao prove that $ \ell =3$ if  $ \var \neq \mathbb P^3$ \cite[Theorem~1]{OgataZhao}.  The statement then follows from Theorem 3 in \cite{OgataZhao}.
\end{proof}

\begin{thm}\label{A3} The only $r$-dimensional    toric variety ($r\ge 2$)  with an ample line bundle which is (-1)-regular is the projective space $\mathbb P^r$.
\end{thm}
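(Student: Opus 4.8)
The plan is to translate $(-1)$-regularity of $\eta$ into a statement about the lattice points of its associated polytope, and then to recognize that statement as the Ehrhart-theoretic characterization of projective space.

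First I would apply Proposition \ref{conditions}(ii): the ample Cartier divisor $\eta$ is $(-1)$-regular if and only if $H^0(\var,\cO_\var(r\eta-\beta_0))=0$. Since $\eta$ is ample and Cartier, it is the class of a divisor whose polytope $P=P_\eta\subset M\otimes_\Z\R$ is a full-dimensional lattice polytope, and the polytope of $r\eta$ is the dilate $rP$. Writing $\cO_\var(r\eta-\beta_0)=\cO_\var(r\eta)\otimes\omega_\var$ and applying toric Serre duality (as already used in \eqref{bottdual}), the dimension of this cohomology group equals the number of lattice points in the interior of $rP$. Hence $\eta$ is $(-1)$-regular if and only if $rP$ contains no interior lattice point.

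The second step is to extract the geometry from Ehrhart theory. For a full-dimensional lattice polytope $P$ of dimension $r$ the codegree $\mathrm{codeg}(P)=\min\{k\ge 1 : \mathrm{int}(kP)\cap M\neq\emptyset\}$ satisfies $1\le\mathrm{codeg}(P)\le r+1$, the upper bound following from the basic properties of the Ehrhart $h^\ast$-vector via the reciprocity identity $\deg(P)=r+1-\mathrm{codeg}(P)$. The vanishing obtained above says exactly that $\mathrm{codeg}(P)\ge r+1$; together with the universal bound this forces $\mathrm{codeg}(P)=r+1$, equivalently $\deg(P)=0$. A lattice polytope of degree $0$ has Ehrhart series $(1-z)^{-(r+1)}$ and hence normalized volume $1$, and such a polytope is a unimodular simplex. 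Since the normal fan of a unimodular $r$-simplex is the fan of $\mathbb P^r$, I conclude $\var\cong\mathbb P^r$ (with $\eta$ the hyperplane class).

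The main obstacle is the middle step, namely that the failure of $rP$ to contain an interior lattice point forces $P$ to be a unimodular simplex: this is precisely the borderline case of the codegree bound, and I would either invoke Stanley's nonnegativity of the $h^\ast$-vector directly or cite the analysis of multiples of lattice polytopes without interior lattice points of Batyrev--Nill. One should also verify that the interior-lattice-point interpretation of $H^0(\var,\cO_\var(r\eta)\otimes\omega_\var)$ persists on the possibly singular $\var$, which again follows from the toric Serre duality already invoked in the paper.
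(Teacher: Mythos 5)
Your proposal is correct and follows essentially the same route as the paper: both reduce via Proposition \ref{conditions}(ii) to the vanishing $H^0(\var,\cO_\var(r\eta-\beta_0))=0$, and then conclude by the combinatorial fact that a lattice $r$-polytope whose $r$-th dilate has no interior lattice points must be a unimodular simplex, which the paper simply cites from \cite{OgataZhao} (or, in combinatorial form, \cite[Prop.~1.4]{BatyrevNill}). The only difference is that you unpack that citation into its Ehrhart-theoretic ($h^\ast$-vector/codegree) proof, which is a welcome elaboration but not a different argument.
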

\begin{proof} Let $\eta$ be an ample Cartier divisor  in the variety under consideration. By  Proposition   \ref{conditions}(ii) we have that the line
bundle $\mathcal O_\var(r\eta-\beta_0)$ has no sections. The claim then follows from 
Theorem 1 in \cite{OgataZhao} (or, for the same statement in purely combinatorial terms,   \cite[Prop.~1.4]{BatyrevNill}).
\end{proof}

\subsection{Oda varieties}\label{Oda}

\begin{defin}\label{A1}  A toric variety $\var$ is an Oda variety if the multiplication morphism $S_{\alpha_1 }\otimes S_{\alpha_2} \to S_{\alpha_1+\alpha_2}$
is surjective when the classes $\alpha_1$ and $\alpha_2$ in $\operatorname{Pic}(\var)$ are
ample and nef, respectively.
\end{defin}

The  question of the surjectivity of this map was  posed by  Oda in \cite{OdaUnpubl}  under more general  conditions.
Note that the conjecture is still open even for smooth projective toric varieties. 
This assumption  can be stated in terms of the Minkowski sum of polytopes, because the  integral points of a polytope associated with a line bundle correspond to   sections of the line bundle. Definition \ref{A1} says  that  the sum $P_{\alpha_1}+ P_{\alpha_2}$  of the polytopes associated with the line bundles ${\mathcal O}_{\var}(\alpha_1)$ and ${\mathcal O}_{\var}(\alpha_2$) is equal to  their Minkowski sum, that is $P_{\alpha_1+\alpha_2}$, the polytope associated with the line bundle ${\mathcal O}_{\var}(\alpha_1+\alpha_2)$. 
The Oda varieties are characterized by Property 2.2 by Ikeda in  \cite{Ikeda09}, as $\alpha_1$ is nef if and only if it is globally generated.  
Ikeda  also shows the following:
\begin{thm} \label{Ikeda}\cite[Corollary ~ 4.2]{Ikeda09}
\begin{enumerate} 
\item A smooth toric variety  with Picard number $2$ is an Oda variety.
\item  The total space of a toric projective bundle on an Oda variety is also an Oda variety. 
\end{enumerate}
\end{thm}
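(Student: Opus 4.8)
The plan is to translate the Oda condition of Definition \ref{A1} into the \emph{integer decomposition property} of Minkowski sums of lattice polytopes, and then verify it by reducing to two model situations: a projective space fibre and the base. As recalled after Definition \ref{A1}, for a nef class $\alpha$ the sections of $\cO_{\var}(\alpha)$ are indexed by the lattice points of $P_\alpha$, and for nef $\alpha_1,\alpha_2$ one always has the equality of convex bodies $P_{\alpha_1}+P_{\alpha_2}=P_{\alpha_1+\alpha_2}$. Hence surjectivity of $S_{\alpha_1}\otimes S_{\alpha_2}\to S_{\alpha_1+\alpha_2}$ is equivalent to the assertion that every lattice point of $P_{\alpha_1+\alpha_2}$ is a sum $u_1+u_2$ with $u_i\in P_{\alpha_i}$ a lattice point. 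This lattice decomposition is the object I would control in both parts.

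For part (ii), let $\pi\colon \var=\mathbb{P}(\cE)\to B$ be the projective bundle, where $B$ is an Oda variety and, since we work in the toric category, $\cE=\bigoplus_{i=0}^s\cO_B(D_i)$ splits as a sum of line bundles. I would first use $\operatorname{Pic}(\var)=\pi^\ast\operatorname{Pic}(B)\oplus\Z\xi$ with $\xi=\cO_{\mathbb{P}(\cE)}(1)$, and the projection formula $H^0(\var,\xi^{\otimes m}\otimes\pi^\ast L)=\bigoplus_{|a|=m}H^0(B,L+\sum_i a_iD_i)$ (with the standard conventions for $\mathbb{P}(\cE)$), so that the polytope of a nef class on $\var$ appears as a \emph{Cayley sum} of the base polytopes $P_{L+\sum_i a_iD_i}$ over the dilated standard $s$-simplex of exponents $a$. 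Given an ample class and a nef class on $\var$, a lattice point of the sum polytope carries a well-defined fibre (simplex) coordinate, which I would decompose first using the integer decomposition property of the simplex, equivalently the fact that the fibre $\mathbb{P}^s$ is an Oda variety; the residual base coordinate then lies in a Minkowski sum of base polytopes attached to a nef and an ample class, and decomposes because $B$ is Oda. Stitching the two decompositions gives the required $u_1+u_2$.

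For part (i), I would invoke Kleinschmidt's classification: every smooth complete toric variety of Picard number $2$ is a projective bundle $\mathbb{P}(\cO_{\mathbb{P}^r}\oplus\bigoplus_i\cO_{\mathbb{P}^r}(a_i))\to\mathbb{P}^r$ over a projective space. Since $\mathbb{P}^r$ is an Oda variety --- there the multiplication $S_m\otimes S_n\to S_{m+n}$ of graded pieces of a polynomial ring is visibly surjective --- part (ii) applies and yields the conclusion. Thus (i) is really a corollary of (ii) together with the structure theorem, which is the natural logical order and matches the status of the statement as Ikeda's \cite[Cor.~4.2]{Ikeda09}.

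The main obstacle is the combinatorial bookkeeping in part (ii): one must show that the decomposition in the simplex (fibre) direction and the decomposition in the base direction can be made \emph{simultaneously} integral, i.e.\ that splitting the simplex coordinate of a lattice point does not obstruct integrality of the residual base coordinate. This compatibility is precisely where the ampleness of one of the two factors and the Oda property of $B$ are used, and it is the technical heart of Ikeda's argument, to which we refer for the complete proof.
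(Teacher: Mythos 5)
First, a point of comparison: the paper does not prove this statement at all --- it is imported verbatim from Ikeda \cite[Cor.~4.2]{Ikeda09}, and the citation \emph{is} the paper's proof. So your attempt is really a reconstruction of Ikeda's argument, and judged on its own terms it is an outline, not a proof. The overall architecture is the right one (and agrees with how Ikeda derives the corollary): translate the Oda condition into the integer decomposition property of $P_{\alpha_1}+P_{\alpha_2}=P_{\alpha_1+\alpha_2}$ (this translation is correct, since for nef toric divisors the support functions add, so the real Minkowski sum equals the polytope of the sum); prove the projective-bundle statement (ii) using the splitting $\cE=\bigoplus_i\cO_B(D_i)$, $\operatorname{Pic}(\var)=\pi^\ast\operatorname{Pic}(B)\oplus\Z\xi$, and the pushforward decomposition $H^0(\var,m\xi+\pi^\ast L)=\bigoplus_{|a|=m}H^0(B,L+\textstyle\sum_i a_iD_i)$; then obtain (i) from (ii) via Kleinschmidt's classification of smooth complete toric varieties of Picard number $2$ as split projective bundles over $\mathbb P^r$, together with the trivial fact that $\mathbb P^r$ is Oda.

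The genuine gap is exactly the step you flag and then defer: given a lattice point of $P_{\alpha_1+\alpha_2}$ whose fibre (simplex) coordinate is $c$ with $|c|=m_1+m_2$, you must produce \emph{some} splitting $c=a+b$, $|a|=m_1$, $|b|=m_2$, for which the residual base point decomposes integrally as a point of $P_{L_1+\sum_i a_iD_i}$ plus a point of $P_{L_2+\sum_i b_iD_i}$. This cannot be settled by a bare appeal to the Oda property of $B$, because the twists $L_1+\sum_i a_iD_i$ and $L_2+\sum_i b_iD_i$ appearing in the pushforward need not be nef or ample on $B$ (some summands may even have no sections), so the hypothesis ``ample $\otimes$ nef'' of Definition \ref{A1} does not apply summand by summand; one has to organize the induction on the simplex coordinate (this is where ampleness of one factor enters) so that only classes to which the Oda property of $B$ applies ever occur. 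Since you explicitly refer the reader to Ikeda for precisely this point, your proposal reduces, in effect, to the same citation the paper makes --- which is acceptable as a use of the literature, but it means the proposal does not constitute an independent proof of the statement.
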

 We have: 
\begin{prop}\label{corA1}
Let $\var$ be an $n$-dimensional projective toric variety. If $\operatorname{Pic}(\var) = \mathbb Z$  and its ample generator $\eta$
is 0-regular, then $\var$ is an Oda variety.
\end{prop}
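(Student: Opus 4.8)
The plan is to reduce everything to powers of the single generator $\eta$ and then read off the surjectivity directly from Lazarsfeld's Theorem \ref{laza}. Since $\operatorname{Pic}(\var)=\Z\eta$ with $\eta$ ample, every ample class is $a\eta$ with $a\ge 1$ and every nef class is $b\eta$ with $b\ge 0$ (the nef cone is the closure of the ample ray in $\operatorname{Pic}(\var)\otimes\R\cong\R$); moreover the Cox-ring graded piece in a Cartier degree is $S_{c\eta}=H^0(\var,\cO_\var(c\eta))$. Thus Definition \ref{A1} asks exactly that the multiplication $H^0(\var,\cO_\var(a\eta))\otimes H^0(\var,\cO_\var(b\eta))\to H^0(\var,\cO_\var((a+b)\eta))$ be surjective for all $a\ge 1$ and $b\ge 0$. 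This is precisely the map \eqref{surj} of Theorem \ref{laza} for the locally free sheaf $\cF=\cO_\var(a\eta)$, the ample line bundle $L=\eta$, with $m=0$ and $k=b$. So it suffices to prove that $\cO_\var(a\eta)$ is $0$-regular with respect to $\eta$ for every $a\ge 1$ and then invoke Theorem \ref{laza}(ii); the degenerate case $b=0$ is the trivial surjection onto $H^0(\var,\cO_\var(a\eta))$ by multiplication with constants.

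The first step is to upgrade the single hypothesis --- that $\eta$ is $0$-regular --- into a whole range of cohomology vanishings. By Theorem \ref{laza}(iii), $0$-regularity of $\eta$ forces $\eta$ to be $j$-regular (with respect to itself) for every $j\ge 0$; unwinding the definition of regularity, this says $H^q(\var,\cO_\var((j+1-q)\eta))=0$ for all $q\ge 1$ and all $j\ge 0$, i.e. $H^q(\var,\cO_\var(s\eta))=0$ whenever $q\ge 1$ and $s\ge 1-q$. One could instead extract the same vanishings from Proposition \ref{conditions} together with Serre duality and the Bott--Danilov--Steenbrink theorem, but propagating through Theorem \ref{laza} is the most economical route.

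With these vanishings available the remaining step is immediate: $0$-regularity of $\cO_\var(a\eta)$ with respect to $\eta$ means $H^q(\var,\cO_\var((a-q)\eta))=0$ for all $q\ge 1$, and since $a-q\ge 1-q$ exactly when $a\ge 1$, the vanishing above applies. Theorem \ref{laza}(ii) then yields surjectivity of \eqref{surj} for every $k\ge 0$, and specializing $k=b$ completes the argument. I expect the only real obstacle to be the regularity bookkeeping in the middle step --- making sure the propagated vanishing range $s\ge 1-q$ is exactly what is needed and that the inequality $a\ge 1$ is the precise condition --- while the reduction to powers of $\eta$ and the identification $S_{c\eta}=H^0(\var,\cO_\var(c\eta))$ are where the rank-one hypothesis $\operatorname{Pic}(\var)=\Z$ is genuinely used.
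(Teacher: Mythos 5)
Your proof is correct, and it shares the paper's overall frame: both arguments use $\operatorname{Pic}(\var)=\Z\eta$ and the identification $S_{c\eta}=H^0(\var,\cO_\var(c\eta))$ to reduce the Oda condition to the claim that $\cO_\var(c\eta)$ is $0$-regular with respect to $\eta$ for every $c\ge 1$, and both then conclude via Theorem \ref{laza}(ii) (the paper takes $\cF$ to be the nef factor and absorbs the ample factor into $L^k$; you do the reverse, which is equivalent by symmetry of the multiplication map). Where you genuinely diverge is in how that $0$-regularity is established. The paper proves the required vanishings $H^i(\var,\cO_\var((\ell-i)\eta))=0$ for $\ell\ge 1$, $i>0$, by a case analysis resting on toric vanishing theorems: Bott--Danilov--Steenbrink \eqref{bott} together with Serre duality \eqref{bottdual} handles $0<i<n$ and the cases $i=n$, $\ell\ge n$, while the hypothesis that $\eta$ is $0$-regular enters only through Proposition \ref{conditions}, i.e.\ the vanishing $H^0(\var,\cO_\var((n-1)\eta-\beta_0))=0$, which kills $H^n$ when $\ell=1$ (the cases $1<\ell<n$ following \emph{a fortiori}). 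You instead propagate the hypothesis purely formally: Theorem \ref{laza}(iii) makes $\eta$ $j$-regular for all $j\ge 0$, yielding $H^q(\var,\cO_\var(s\eta))=0$ for all $q\ge 1$, $s\ge 1-q$, which is exactly the range that $0$-regularity of $a\eta$, $a\ge 1$, demands. Each route buys something: yours is shorter and uses no toric vanishing theorems at all---the toric structure enters only through Cox's identification of graded pieces with global sections and through the fact that ample line bundles on complete toric varieties are globally generated (needed to invoke Theorem \ref{laza})---so the cohomological core of your argument works on any projective variety with $\operatorname{Pic}=\Z$ generated by an ample, globally generated, $0$-regular line bundle; the paper's computation, by contrast, makes visible that all but the top-degree vanishings hold unconditionally on projective toric varieties, so the $0$-regularity hypothesis is doing work only in $H^n$ for small twists.
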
 
\begin{proof}
If $\alpha_2$ is a nef class in $\operatorname{Pic}(\var)$, then $\alpha_2= \ell \eta$
with $\ell \ge 0$. If $\ell = 0$  the condition in Definition  \ref{A1}  is empty, so we can assume $\ell \ge 1$.
If we set $\mathcal F = \mathcal O_\var(\alpha_2)$ and $L= \mathcal O_\var(\eta)$,
equation  \eqref{surj} in Theorem \ref{laza} becomes the wanted surjectivity condition, provided that $\alpha_2$ is 0-regular (with respect to $\eta$).  We will show that 
 $$ H^i(\var, \mathcal O_\var((\ell - i) \eta ) )= 0$$
 for $\ell \ge 1$ and $i>0$. For $0 < i < n $  all vanishings are a consequence of the Bott-Danilov-Steenbrink vanishing \eqref{bott}, possibly using Serre duality \eqref{bottdual}. For $i=n$ and 
 $\ell \ge n$ one uses again the Bott-Danilov-Steenbrink vanishing. For $i=n$ and $\ell =1 $
 one uses Serre duality and Proposition \ref{conditions}; the cases $i=n$ and $1 < \ell < n $ follow {\em a fortiori.} 
\end{proof}

\subsection{Other vanishings}\label{othervanishings}
 Let $\var= \Pt$. Then   any ample divisor $\beta$ of degree at least  $4$ is 0-regular; also, $\beta - 2 H$  is nef.  The vanishings
$h^1(\Pt, \cO_\Pt (\beta - \eta ))=h^2(\Pt, \cO_\Pt (\beta - \eta ))=h^2(\Pt, \cO_\Pt (\beta - 2\eta ))=0$
 follow from either fact. These vanishings are used in the proof of the estimate of the codimensions of the Noether-Lefschetz locus for $\Pt$. In our more general setting, the proof of Theorem \ref{main}  
 will require analogous vanishing conditions: \begin{equation}\label{extra}h^1(\var, \cO_\var (\beta - \eta ))=h^2(\var, \cO_\var (\beta - \eta ))=h^2(\var, \cO_\var (\beta - 2\eta ))=0. \end{equation} 
So our next step is to study these vanishing conditions.
\begin{prop}\label{extranef}  Let  $\var$ be a  complete simplicial toric variety, $\beta$  and $\eta$  nef divisors. If $\beta - 2 \eta$ is nef, then the vanishings in equation \eqref{extra} are satisfied.
\end{prop}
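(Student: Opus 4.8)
The plan is to reduce all three vanishings to a single tool, namely Demazure's vanishing theorem --- the $p=0$ case of Mavlyutov's vanishing \eqref{MavlyutovNef} recalled in Section \ref{Zariski} --- which asserts that $H^q(\var,\cO_\var(\gamma))=0$ for all $q>0$ whenever $\var$ is complete and $\gamma$ is nef. The only observation needed beyond a direct invocation is that the class $\beta-\eta$ appearing in the first two vanishings of \eqref{extra} is itself nef, which is not among the hypotheses but follows from them.

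First I would record that the nef classes of $\var$ form a convex cone, in particular a sub-semigroup of $Cl(\var)$ closed under addition: the intersection of a sum of two nef divisors with any irreducible curve is a sum of two non-negative numbers, hence non-negative. Writing
\[
\beta-\eta = (\beta-2\eta)+\eta,
\]
exhibits $\beta-\eta$ as the sum of the two classes $\beta-2\eta$ and $\eta$, both nef by hypothesis; therefore $\beta-\eta$ is nef. Applying Demazure's vanishing to the nef class $\beta-\eta$ then gives $H^q(\var,\cO_\var(\beta-\eta))=0$ for every $q>0$, and in particular $h^1(\var,\cO_\var(\beta-\eta))=h^2(\var,\cO_\var(\beta-\eta))=0$. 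Applying the same vanishing to the nef class $\beta-2\eta$ yields $H^q(\var,\cO_\var(\beta-2\eta))=0$ for every $q>0$, hence $h^2(\var,\cO_\var(\beta-2\eta))=0$. These are exactly the three vanishings claimed.

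There is essentially no substantive obstacle here: once additivity of nefness is invoked, the statement is a direct corollary of Demazure's vanishing. The only point deserving a moment's care is confirming that the hypotheses of that theorem are met for each class to which it is applied --- completeness of $\var$, which is assumed throughout, and nefness of both $\beta-\eta$ and $\beta-2\eta$, the latter by assumption and the former because the nef cone is closed under sums. I would present the argument in the order above, deriving the nefness of $\beta-\eta$ before citing the vanishing theorem, so that all three cohomology groups are disposed of uniformly.
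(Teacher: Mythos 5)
Your proof is correct and is essentially the paper's own argument: the paper's proof is the one-line remark that the statement ``follows from Demazure's vanishing theorems (the vanishings in \eqref{MavlyutovNef} for $p=0$),'' which implicitly uses exactly your observation that $\beta-\eta=(\beta-2\eta)+\eta$ is nef as a sum of nef classes. You have simply made that additivity step explicit, which is a faithful (and slightly more careful) rendering of the same route.
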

\begin{proof}  It follows from Demazure's vanishing theorems (the vanishings in \eqref{MavlyutovNef} for $p=0$).
\end{proof}

\begin{prop}\label{extrareg}  Let  $\var$ be a    simplicial toric variety, $\beta$  and $\eta$    Cartier divisors, with $\beta$ nef and $\eta$ ample.  Assume that  $\beta $ is  0-regular with respect to $\eta$. Then:
\begin{enumerate} \item  the vanishings in equation \eqref{extra} are satisfied;
\item  the multiplication morphism $S_\beta \otimes S_{k \eta} \to S_{\beta+ k \eta}$ is surjective
for all $k\ge 0$.
\end{enumerate}
\end{prop}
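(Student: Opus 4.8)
The plan is to prove Proposition \ref{extrareg} by reducing both statements to the regularity machinery of Theorem \ref{laza} applied to the line bundle $L=\cO_\var(\eta)$, exploiting the hypothesis that $\beta$ is $0$-regular with respect to $\eta$. The key observation is that $0$-regularity of $\cF=\cO_\var(\beta)$ means $H^q(\var,\cO_\var(\beta-q\eta))=0$ for all $q>0$; the target vanishings in \eqref{extra} are precisely the instances $q=1$ with twist $\beta-\eta$, $q=2$ with twist $\beta-\eta$, and $q=2$ with twist $\beta-2\eta$.

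First I would treat part (i). The vanishings $h^1(\var,\cO_\var(\beta-\eta))=0$ and $h^2(\var,\cO_\var(\beta-2\eta))=0$ are the defining conditions of $0$-regularity for $q=1$ and $q=2$ respectively, so they are immediate. The one genuinely nontrivial vanishing is $h^2(\var,\cO_\var(\beta-\eta))=0$, since $\beta-\eta$ is the $1$-regular twist evaluated in degree $q=2$ rather than $q=2$. Here I would invoke Theorem \ref{laza}(iii): since $\cO_\var(\beta)$ is $0$-regular, it is also $m$-regular for every $m\ge 0$, in particular $1$-regular, and $1$-regularity of $\cO_\var(\beta)$ says exactly that $H^q(\var,\cO_\var(\beta-(q-1)\eta))=0$ for all $q>0$; taking $q=2$ gives the desired $h^2(\var,\cO_\var(\beta-\eta))=0$. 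Thus all three vanishings follow from the propagation of regularity upward in $m$.

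For part (ii), I would set $\cF=\cO_\var(\beta)$ and $L=\cO_\var(\eta)$ in the surjectivity statement \eqref{surj} of Theorem \ref{laza}(ii). Since $\cF$ is $0$-regular with respect to $L$, the theorem with $m=0$ gives that
\begin{equation}
H^0(\var,\cO_\var(\beta))\otimes H^0(\var,\cO_\var(k\eta))\to H^0(\var,\cO_\var(\beta+k\eta))
\end{equation}
is surjective for every $k\ge 0$. Interpreting global sections of these line bundles as the graded pieces of the Cox ring $S$, this is exactly the assertion that $S_\beta\otimes S_{k\eta}\to S_{\beta+k\eta}$ is surjective, which is the claim.

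The main obstacle I anticipate is the bookkeeping in part (i): one must be careful to read off the correct cohomological degree for each twist and match it against the regularity condition, since the naive reading would only yield the $q=1$ vanishing for $\beta-\eta$ and not the $q=2$ one. The resolution is the upward stability of regularity in Theorem \ref{laza}(iii), which upgrades $0$-regularity to $1$-regularity and thereby supplies the borderline vanishing $h^2(\var,\cO_\var(\beta-\eta))=0$. Everything else is a direct application of Theorem \ref{laza} together with the identification of sections with Cox-ring gradings, so the proof should be short.
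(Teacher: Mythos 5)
Your proof is correct and follows essentially the same route as the paper: the paper's own argument upgrades $0$-regularity of $\cO_\var(\beta)$ to $1$-regularity via Theorem \ref{laza}(iii) to obtain all three vanishings in \eqref{extra}, and invokes Theorem \ref{laza}(ii) with $m=0$ for the surjectivity of $S_\beta\otimes S_{k\eta}\to S_{\beta+k\eta}$, exactly as you do. (The only blemish is a slip of the pen where you wrote ``evaluated in degree $q=2$ rather than $q=2$''---you clearly meant ``rather than $q=1$''---which does not affect the argument.)
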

\begin{proof}  In fact $\beta$ is also $1$-regular by (iii) in Theorem \ref{laza} and the vanishings follow from the definition of regularity.  Part (ii) in Theorem \ref{laza} implies the surjectivity of the multiplication map. \end{proof}

\begin{corol} Let $\var$ be a Fano toric simplicial threefold, $\eta$ an ample Cartier class,
and $\beta=\beta_0+n\,\eta$. If $n\ge 2$, then $\beta-2\eta$ is ample. If $n\ge 3$, then $\beta$ is 0-regular with respect to $\eta$. \label{corFano}
\end{corol}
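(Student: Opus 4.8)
The plan is to treat the two assertions separately; both reduce to the Fano hypothesis that $\beta_0=-K_\var$ is ample, combined with the toric vanishing theorems of Section \ref{Zariski}. For the first claim I would simply rewrite
$\beta-2\eta=\beta_0+(n-2)\eta$. Since $\var$ is Fano, $\beta_0$ is ample, and for $n\ge 2$ the class $(n-2)\eta$ is nef (ample when $n>2$, trivial when $n=2$). As the sum of an ample class and a nef class is again ample, $\beta-2\eta$ is ample.

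For the second claim, recall that $\beta$ being $0$-regular with respect to $\eta$ means $H^q(\var,\cO_\var(\beta-q\eta))=0$ for all $q>0$. Because $\var$ is a threefold, cohomology vanishes above degree $3$, so only $q\in\{1,2,3\}$ need to be checked. I would write $\beta-q\eta=\beta_0+(n-q)\eta$; for $n\ge 3$ and $q\le 3$ one has $n-q\ge 0$, so each of these classes is the sum of the ample class $\beta_0$ and the nef class $(n-q)\eta$, hence ample. Demazure's vanishing, i.e. the $p=0$ case of \eqref{MavlyutovNef} (equivalently \eqref{bott} for an ample class), then yields $H^q(\var,\cO_\var(\beta-q\eta))=0$ for each such $q$, which is exactly the definition of $0$-regularity.

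I do not expect a substantive obstacle here: the statement is a direct corollary of the Fano condition together with the vanishing results already established. The only points requiring care are the elementary fact that ample $+$ nef $=$ ample, and correctly unwinding the definition of $0$-regularity \emph{with respect to} $\eta$ into the three cohomology vanishings for $q=1,2,3$; once this is done, each vanishing follows from a single application of Demazure's (or Bott-Danilov-Steenbrink's) vanishing to an ample line bundle.
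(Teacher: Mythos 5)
Your proof is correct, and it supplies exactly the argument the paper leaves implicit: the corollary is stated without proof, the intended derivation being precisely your decomposition $\beta-q\eta=\beta_0+(n-q)\eta$ together with ample $+$ nef $=$ ample and the toric vanishing theorems of Section \ref{Zariski}. The only cosmetic difference is that the paper (in the parallel Example \ref{0ref}) invokes the Serre-dual Bott--Danilov--Steenbrink vanishing \eqref{bottdual}, whereas you apply Demazure's vanishing directly to the nef classes $\beta_0+(n-q)\eta$; both are equivalent here.
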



\bigskip
\section{Noether-Lefschetz loci}\label{codimsec}

In this section we continue the analysis of \cite{BG1} for $X$   a general surface in   a complete simplicial toric variety $\var$ corresponding to an ample Cartier divisor; $X$ is quasi-smooth, see Sections \ref{divisors} and \ref{hypersurfaces}.  
Let $\mathcal M_\beta$ be the moduli space of surfaces in $\var$ of degree $\beta$ modulo automorphisms of $\var$ (for a precise definition see \cite{BaCox94}).  
 We want to establish a lower bound for the codimension of  the closed subscheme
of $\mathcal M_\beta$ corresponding to surfaces whose Picard number is strictly larger than that of $\var$. The codimension will depend on a fixed class  $\eta\in \operatorname{Pic}(\var)$, an ample  {\it primitive} Cartier class (primitive means that it is not a multiple of an ample class).
  If $\var=\Pt$  the condition that $D+K_\var$ is  nef   in the theorems below is the classical condition  $\deg(D)  \geq 4$.  
 \begin{remark} 
  Note that the condition $\beta - \beta_0 = n\eta$ implies that the anti-canonical class $\beta_0$ is Cartier,
namely, $\var$ is necessarily Gorenstein. 
\end{remark} 
In \cite{BG1} we proved:
\begin{thm}\label{BG1}\cite{BG1} Let $\var$ be a 3-dimensional   simplicial toric variety,  $D$ an ample Cartier divisor on $\var$,  $X$ a very general (quasi-smooth) surface in the linear system $\vert D\vert$, $\beta=\deg D$ and $\beta_0 = - \deg K_{\var }$.
If  the morphism 
  \begin{equation}\label{mult} 
  R(f)_\beta\otimes R(f)_{\beta-\beta_0} \to R(f)_{2\beta-\beta_0}
   \end{equation}
is surjective,     $X$ and $\var $ have the same Picard number.
\label{picard}\end{thm}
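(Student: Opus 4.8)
The plan is to run the infinitesimal variation of Hodge structure (IVHS) argument of Green and Voisin, transplanted to the toric orbifold setting. First I would recall the Hodge-theoretic dictionary for quasi-smooth ample hypersurfaces in complete simplicial toric varieties (Batyrev--Cox, Mavlyutov, Steenbrink): via the residue map, the primitive (variable) part of the Hodge structure on $H^2(X)$ is identified with graded pieces of the Jacobian ring,
\[
H^{2,0}_{prim}(X)\cong R(f)_{\beta-\beta_0},\qquad H^{1,1}_{prim}(X)\cong R(f)_{2\beta-\beta_0},\qquad H^{0,2}_{prim}(X)\cong R(f)_{3\beta-\beta_0},
\]
and, crucially, cup product with the Kodaira--Spencer class (the IVHS) is identified with multiplication in $R(f)$. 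In particular, the derivative carrying a class of $H^{1,1}_{prim}$ into $H^{0,2}_{prim}$ along a tangent direction $v$ to $\mathcal M_\beta$, viewed as an element of $R(f)_\beta$, is the multiplication map $R(f)_\beta\otimes R(f)_{2\beta-\beta_0}\to R(f)_{3\beta-\beta_0}$. The vanishing theorems of Section \ref{Zariski}, namely \eqref{bott}, \eqref{bottdual} and \eqref{MavlyutovNef}, are what make this identification and the requisite exact sequences available in the orbifold setting.

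Next I would reduce the Picard-number statement to a statement about primitive Hodge classes. Since $\var$ is toric, $H^2(\var)=H^{1,1}(\var)$ is entirely algebraic of dimension equal to the Picard number of $\var$; a toric Lefschetz hyperplane argument shows that the restriction $H^2(\var)_\Q\to H^2(X)_\Q$ is injective, with image the orthogonal complement of the vanishing cohomology $H^2_{van}(X)$. Hence the Picard number of $X$ exceeds that of $\var$ precisely when $X$ carries a nonzero integral Hodge class $\lambda\in H^{1,1}_{prim}(X)=R(f)_{2\beta-\beta_0}$. The Noether--Lefschetz locus is then the countable union, over such $\lambda$, of the Hodge loci $NL_\lambda\subset\mathcal M_\beta$ on which $\lambda$ remains of type $(1,1)$. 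By Voisin's description, the tangent space to $NL_\lambda$ at $[X]$ is the kernel of multiplication by $\lambda$, that is, of $R(f)_\beta\to R(f)_{3\beta-\beta_0}$, $v\mapsto v\cdot\lambda$. It therefore suffices to show that for every $\lambda\neq 0$ this locus is proper, for then a very general $X$ lies outside the countable union and has Picard number equal to that of $\var$.

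The heart of the matter is to prove that the only $\lambda\in R(f)_{2\beta-\beta_0}$ annihilated by all of $R(f)_\beta$ is $\lambda=0$, since this rules out $NL_\lambda=\mathcal M_\beta$. Here I would invoke the Macaulay--Gorenstein self-duality of $R(f)$, available because $\beta_0$ is Cartier (so $\var$ is Gorenstein): the perfect pairing $R(f)_a\times R(f)_{\sigma-a}\to R(f)_\sigma\cong\C$ of socle degree $\sigma=4\beta-2\beta_0$ makes $R(f)_{2\beta-\beta_0}$ self-dual and pairs $R(f)_{3\beta-\beta_0}$ with $R(f)_{\beta-\beta_0}$. Because the pairing is induced by multiplication into the socle, $v\cdot\lambda=0$ for all $v\in R(f)_\beta$ is equivalent to $\langle\lambda,\,v\cdot w\rangle=0$ for all $v\in R(f)_\beta$ and $w\in R(f)_{\beta-\beta_0}$, i.e. to $\lambda$ being orthogonal to the image of $R(f)_\beta\otimes R(f)_{\beta-\beta_0}\to R(f)_{2\beta-\beta_0}$. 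By hypothesis this map is surjective, so $\lambda$ is orthogonal to all of $R(f)_{2\beta-\beta_0}$, and nondegeneracy of the self-pairing forces $\lambda=0$. Consequently no nonzero primitive Hodge class is infinitesimally invariant, each $NL_\lambda$ with $\lambda\neq0$ has positive codimension, and the theorem follows.

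I expect the main obstacle to be the first step: establishing the toric residue dictionary together with the Macaulay duality of $R(f)$ in the singular (merely $\Q$-factorial, Gorenstein, orbifold) setting, so that the IVHS really is ring multiplication and the pairing is perfect in the degrees at hand. This is precisely where the completeness, simpliciality and Gorenstein hypotheses, together with the vanishing theorems \eqref{bott}--\eqref{MavlyutovNef}, are genuinely used; once the dictionary and the duality are in place, the duality-plus-surjectivity argument above is formal.
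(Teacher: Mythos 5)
Your overall strategy---the Batyrev--Cox dictionary identifying $H^{2,0}_{prim}(X)$, $H^{1,1}_{prim}(X)$, $H^{0,2}_{prim}(X)$ with $R(f)_{\beta-\beta_0}$, $R(f)_{2\beta-\beta_0}$, $R(f)_{3\beta-\beta_0}$, the identification of the IVHS with ring multiplication, the reduction of the Picard-number statement to the non-existence of nonzero primitive integral $(1,1)$-classes, and the countable-union-of-Hodge-loci argument---is exactly the route of \cite{BG1}, which is where this theorem is proved (the present paper only quotes it, and re-invokes the key Lemma 3.7 of \cite{BG1} in the proof of Theorem \ref{main}). The gap is in your crucial last step: you kill an infinitesimally invariant class $\lambda$ by invoking ``Macaulay--Gorenstein self-duality of $R(f)$'' with socle degree $4\beta-2\beta_0$. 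This is not available in the stated generality. First, Theorem \ref{BG1} does not assume $\beta_0$ Cartier (that hypothesis only enters later in the paper, through the condition $\beta-\beta_0=n\eta$), so your justification ``$\beta_0$ is Cartier, hence $\var$ is Gorenstein'' imports a hypothesis that is not there. Second, and more seriously, even for Gorenstein $\var$ the Jacobian ring of a quasi-smooth hypersurface in a toric threefold is not an Artinian Gorenstein ring: the Cox ring has one variable per ray (in general more than four), the partials do not form a regular sequence, and their common zero locus is the irrelevant locus, which has positive dimension. Hence there is no ``socle'' in the usual sense, $\dim_\C R(f)_{4\beta-2\beta_0}=1$ is not automatic, and perfectness of the multiplication pairing $R(f)_{2\beta-\beta_0}\times R(f)_{2\beta-\beta_0}\to R(f)_{4\beta-2\beta_0}$ is a delicate toric-residue duality statement, not a formality. (It does hold for $\Pt$ and for weighted projective spaces, where quasi-smoothness makes the partials a regular sequence---precisely the cases with $\dim\var+1$ variables.) So the step you yourself flag as the ``main obstacle'' is a genuine obstruction, not something that becomes formal once the dictionary is set up.

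The fix---and the actual argument of \cite{BG1}---replaces the ring-theoretic duality by the topological polarization, which costs nothing in the orbifold setting. If $\lambda$ is flat and stays of type $(1,1)$ in every direction $v\in R(f)_\beta$, differentiate the identity $\langle\lambda_t,\mu_t\rangle=0$ (valid for type reasons, since $\lambda_t\in H^{1,1}$ and $\mu_t\in H^{2,0}$ on a surface) along $v$: flatness of the intersection form under Gauss--Manin, $\nabla_v\lambda=0$, and Griffiths transversality give $\langle\lambda, v\cdot\mu\rangle=0$ for all $v\in R(f)_\beta$ and $\mu\in H^{2,0}_{prim}\cong R(f)_{\beta-\beta_0}$. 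The surjectivity hypothesis \eqref{mult} then says $\lambda$ is orthogonal to all of $H^{1,1}_{prim}$; since $\lambda$ is also orthogonal to $H^{2,0}\oplus H^{0,2}$ for type reasons, non-degeneracy of the cup product on $H^2_{prim}(X)$ forces $\lambda=0$. That non-degeneracy holds because the quasi-smooth surface $X$ is a projective orbifold, where Poincar\'e duality over $\Q$ and pure polarized Hodge structures are available by \cite{SteenVanishing} (the same input used in Proposition \ref{above}). This uses only the dictionary of \cite{BaCox94} plus Hodge theory on $V$-manifolds, and no Gorenstein property of $R(f)$ whatsoever; your proof becomes correct once the Macaulay-duality step is replaced by this polarization argument.
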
 

Actually when we wrote \cite{BG1} we were unaware of \cite{RavSri09} where a general result is proved using different techniques.

\begin{thm}\label{BGO} Let $\var$ be a 3-dimensional   simplicial toric Oda variety,  $D$ an ample Cartier divisor on $\var$ such that $D+K_\var$ is nef,  $X$ a very general (quasi-smooth) surface in the linear system $\vert D\vert$.  Then  $X$ and $\var $ have the same Picard number.
\end{thm}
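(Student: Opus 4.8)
The plan is to deduce Theorem \ref{BGO} directly from Theorem \ref{BG1} (that is, Theorem \ref{picard}) by verifying its sole hypothesis, namely the surjectivity of the multiplication morphism
\begin{equation}\label{plan-mult}
R(f)_\beta\otimes R(f)_{\beta-\beta_0} \to R(f)_{2\beta-\beta_0}.
\end{equation}
Since $\var$ is a $3$-dimensional simplicial toric Oda variety, $D$ is ample Cartier, and $D+K_\var$ is nef, once \eqref{plan-mult} is established the conclusion that $X$ and $\var$ share the same Picard number follows immediately. So the entire content of the proof is the verification of this surjectivity, and that is where the Oda hypothesis must do its work.

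First I would reduce the question about the graded pieces of the Jacobian ring $R(f)=S/J(f)$ to a question about the Cox ring $S$ itself. The key point is that surjectivity of the multiplication map on the Cox ring,
\[
S_{\alpha_1}\otimes S_{\alpha_2}\to S_{\alpha_1+\alpha_2},
\]
descends to surjectivity of the induced map on the quotient $R(f)$ whenever we can control the relationship between the ideal $J(f)$ in the relevant degrees. Concretely, since $R(f)$ is a quotient of $S$ and the multiplication on $R(f)$ is induced from that on $S$, it suffices to show that the map $S_\beta\otimes S_{\beta-\beta_0}\to S_{2\beta-\beta_0}$ is surjective and that surjectivity is preserved modulo $J(f)$. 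I would set $\alpha_1=\beta$ and $\alpha_2=\beta-\beta_0=n\eta$ in the Oda condition of Definition \ref{A1}.

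This is exactly where the hypotheses match the definition of an Oda variety. The class $\alpha_1=\beta=\deg D$ is ample (since $D$ is an ample Cartier divisor), and the class $\alpha_2=\beta-\beta_0$ equals $\deg(D+K_\var)$, which is nef by assumption. Therefore Definition \ref{A1} applies verbatim: the multiplication morphism $S_{\alpha_1}\otimes S_{\alpha_2}\to S_{\alpha_1+\alpha_2}=S_{2\beta-\beta_0}$ is surjective. This gives surjectivity at the level of the Cox ring, and passing to the quotient $R(f)$ yields the surjectivity of \eqref{plan-mult}.

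The main obstacle I anticipate is the passage from the Cox ring $S$ to the Jacobian ring $R(f)$: one must check that the Oda surjectivity on $S$ genuinely implies surjectivity of the induced map on $R(f)$ in the stated degrees, rather than merely on $S$. This requires verifying that the images of $J(f)$ in degree $2\beta-\beta_0$ are hit by products coming from the two factors, or equivalently that no part of $R(f)_{2\beta-\beta_0}$ escapes the image after quotienting; in the classical $\Pt$ case this is automatic because $S_n\otimes S_m\to S_{n+m}$ is always surjective, but here one leans on the Oda property precisely to recover this step. I would handle it by a short diagram chase comparing the multiplication maps on $S$ and on $R(f)$, using that $J(f)$ is a homogeneous ideal so that the quotient is compatible with the grading and with multiplication.
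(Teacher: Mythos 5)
Your proposal is correct and takes essentially the same route as the paper: the paper's own proof is exactly the observation that, with $\alpha_1=\beta$ ample and $\alpha_2=\beta-\beta_0$ nef, the Oda property makes the multiplication morphism surjective, so Theorem \ref{BG1} applies. The descent from the Cox ring $S$ to the Jacobian ring $R(f)$ that you flag as the main obstacle is automatic, since the quotient maps $S_\gamma\to R(f)_\gamma$ are surjective and compatible with multiplication, so your concluding diagram chase is all that is needed.
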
 
\begin{proof}  Let $\beta=\deg D$, $\beta_0 = - \deg K_{\var }$ and note that  the multiplication morphism \eqref{mult}  is  surjective on an Oda variety. \end{proof}

\begin{thm}\label{BGR}  Let $\var$ be a 3-dimensional   simplicial toric variety,  and $D$ an ample Cartier divisor on $\var$. Let $\beta = \deg D$, and assume that $\beta = \beta_0 + n\,\eta$ 
with $n \ge 0$ for an ample Cartier class $\eta$. If  $\beta$ is 0-regular with respect to $\eta$ and  $X$ is a very general (quasi-smooth) surface in the linear system $\vert D\vert$, then  $X$ and $\var $ have the same Picard number.
\end{thm}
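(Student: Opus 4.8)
The plan is to reduce Theorem \ref{BGR} to the already-established Theorem \ref{BG1}. By that theorem, it suffices to show that under the stated hypotheses the multiplication morphism on Jacobian rings
\begin{equation}\label{redux}
R(f)_\beta\otimes R(f)_{\beta-\beta_0}\to R(f)_{2\beta-\beta_0}
\end{equation}
is surjective. Since $\beta-\beta_0=n\eta$ and $R(f)=S/J(f)$ is a quotient of the Cox ring, the natural strategy is to lift the question to the Cox ring $S$ itself: if the multiplication map $S_\beta\otimes S_{n\eta}\to S_{\beta+n\eta}=S_{2\beta-\beta_0}$ is surjective, then passing to the quotient $R(f)$ the induced map \eqref{redux} is also surjective, because surjectivity is preserved under taking quotients of graded pieces by the homogeneous Jacobian ideal.

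First I would reformulate surjectivity on $S$ in terms of sheaf cohomology and regularity. The graded piece $S_\gamma$ is identified with $H^0(\var,\cO_\var(\gamma))$, so the map in question is the multiplication
\begin{equation}
H^0(\var,\cO_\var(\beta))\otimes H^0(\var,\cO_\var(n\eta))\to H^0(\var,\cO_\var(\beta+n\eta)).
\end{equation}
This is precisely the setting of Proposition \ref{extrareg}(ii), which asserts that if $\beta$ is nef, $\eta$ is ample Cartier, and $\beta$ is $0$-regular with respect to $\eta$, then the multiplication morphism $S_\beta\otimes S_{k\eta}\to S_{\beta+k\eta}$ is surjective for all $k\ge 0$. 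Applying this with $k=n$ yields exactly the surjectivity I need on the Cox ring. The hypotheses of Proposition \ref{extrareg} are met: $\beta$ is ample (hence nef), $\eta$ is ample Cartier, and $0$-regularity of $\beta$ with respect to $\eta$ is assumed in the statement of Theorem \ref{BGR}.

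Putting the pieces together, the argument proceeds as follows. Let $\beta=\deg D$ and $\beta_0=-\deg K_\var$, so that by hypothesis $\beta-\beta_0=n\eta$ with $n\ge 0$. By Proposition \ref{extrareg}(ii) the Cox-ring multiplication $S_\beta\otimes S_{n\eta}\to S_{2\beta-\beta_0}$ is surjective, and passing to the Jacobian ring $R(f)$ the induced map \eqref{redux} is surjective as well. Theorem \ref{BG1} then applies and gives that a very general quasi-smooth $X$ in $\vert D\vert$ has the same Picard number as $\var$.

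I do not expect a serious obstacle here, since the theorem is essentially a packaging of earlier results; the only point requiring mild care is the descent of surjectivity from $S$ to $R(f)$. This is formal: writing $J(f)$ for the homogeneous Jacobian ideal, the surjection $S_\gamma\twoheadrightarrow R(f)_\gamma$ in each degree fits into a commutative diagram with the multiplication maps, so surjectivity of multiplication on the $S$-level forces surjectivity on the $R(f)$-level. One should verify that this diagram genuinely commutes, i.e.\ that the quotient maps are compatible with multiplication, which holds because $J(f)$ is an ideal and the grading is by $Cl(\var)$; no additional geometric input is needed.
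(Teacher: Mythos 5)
Your proposal is correct and follows essentially the same route as the paper: the paper's proof also invokes Proposition \ref{extrareg} to get surjectivity of $S_\beta\otimes S_{n\eta}\to S_{\beta+n\eta}$ and then concludes via Theorem \ref{BG1}, with the descent of surjectivity from the Cox ring $S$ to the Jacobian ring $R(f)$ left implicit. Your explicit verification of that descent step is a sound (and formally necessary) filling-in of what the paper takes for granted.
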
 
\begin{proof}  The map  \eqref{mult} is surjective because  
 $S_{\beta }\otimes S_{n\eta} \to S_{\beta+ n \eta}$ is   by Proposition \ref{extrareg}.
\end{proof}

 \begin{defin}[Noether-Lefschetz locus]
  \label{hypo}  Let $\var$ be a  simplicial toric threefold, $\eta$  a  {primitive} ample Cartier class, $\beta_0=-K_\var$  and $\beta\in \operatorname{Pic}(\var)$   an ample Cartier class that satisfies $\beta - \beta_0 = n\eta$ for some $n \geq 0$. Assume that   very general surfaces in the linear system $\beta$  have the same Picard number of $\var$.
 
  $U_{\eta}(n)$  is   the closed subscheme of $\mathcal M_\beta$, the moduli space of surfaces in $\var$ of degree $\beta$ modulo automorphisms,  corresponding to surfaces whose Picard number is strictly larger than that of $\var$.\label{defvarie}
 \end{defin}

\begin{prop}\label{above}  The codimension of any  irreducible component
of  ${U}_\eta(n)$ is bounded from above by $h^{2,0}(S)=h^0(\var, \cO_\var(n \eta))$, where $S$ is a quasi-smooth surface in the linear system $\beta $.
\end{prop}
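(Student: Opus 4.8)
The plan is to bound the codimension of a Noether-Lefschetz component from above by identifying the tangent space to the moduli space $\mathcal M_\beta$ with an appropriate graded piece of the Jacobian ring, and then recognizing that the locus $U_\eta(n)$ is cut out (to first order) by the condition that a certain period does not vary. Concretely, for a quasi-smooth surface $S=\{f=0\}$ of degree $\beta$, the infinitesimal variations of Hodge structure (IVHS) machinery identifies $H^1(S,T_S)$-type deformations coming from the ambient variety with the graded piece $R(f)_\beta$ of the Jacobian ring, and the holomorphic $2$-forms $H^{2,0}(S)$ with the graded piece $R(f)_{\beta-\beta_0}=R(f)_{n\eta}$ via the residue/adjunction description $H^0(S,\omega_S)\cong H^0(\var,\cO_\var(\beta-\beta_0)|_S)$.

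First I would recall the standard mechanism: a class in $H^{1,1}(S)$ that is not the restriction of an ambient class deforms as one moves in $\mathcal M_\beta$, and it stays of type $(1,1)$ precisely along the Noether-Lefschetz locus. The obstruction to a first-order deformation $\dot f\in R(f)_\beta$ preserving a given Hodge class is measured by the cup-product / multiplication map landing in $H^{2,0}(S)^\vee\otimes(\text{something})$, via the IVHS pairing. The key numerical point is that the defining conditions for membership in the Noether-Lefschetz locus, at the level of tangent spaces, are the vanishing of a map whose target has dimension $h^{2,0}(S)$. Therefore any component of $U_\eta(n)$ is locally cut out by at most $h^{2,0}(S)$ equations on $\mathcal M_\beta$, giving $\operatorname{codim} U_\eta(n)\le h^{2,0}(S)$.

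The computation of $h^{2,0}(S)$ is then purely cohomological. By adjunction $\omega_S\cong\cO_\var(\beta+K_\var)|_S=\cO_\var(\beta-\beta_0)|_S=\cO_\var(n\eta)|_S$, so from the restriction sequence
\begin{equation}
0\to\cO_\var(n\eta-\beta)\to\cO_\var(n\eta)\to\cO_\var(n\eta)|_S\to 0
\end{equation}
one gets $h^{2,0}(S)=h^0(S,\omega_S)=h^0(\var,\cO_\var(n\eta))$, provided the correction terms vanish. Here $n\eta-\beta=-\beta_0$, and the vanishing of $H^0(\var,\cO_\var(-\beta_0))$ and $H^1(\var,\cO_\var(-\beta_0))$ follows from the toric vanishing theorems in Section \ref{Zariski}: since $\beta_0=-K_\var$ and $\var$ is complete, $H^0(\var,\cO_\var(K_\var))=0$ and the relevant $H^1$ vanishes by Demazure's theorem \eqref{MavlyutovNef} for $p=0$ (or by Bott-Danilov-Steenbrink \eqref{bott}-\eqref{bottdual}). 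This yields exactly $h^{2,0}(S)=h^0(\var,\cO_\var(n\eta))$.

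The main obstacle is making the tangent-space argument rigorous in the singular (orbifold) toric setting: one must verify that the IVHS identification of $H^{2,0}(S)$ and of the ambient deformations with graded pieces of the Jacobian ring $R(f)$ continues to hold for quasi-smooth surfaces in a simplicial toric threefold, and that the period map is holomorphic on $\mathcal M_\beta$ so that the Noether-Lefschetz locus is genuinely a countable union of analytic subvarieties whose tangent spaces are the kernels described above. This is where one appeals to the framework of \cite{BG1} and the Hodge-theoretic setup recalled there; granting that framework, the upper bound follows because the number of period conditions defining each component is controlled by $\dim H^{2,0}(S)$.
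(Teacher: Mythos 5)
Your overall strategy is the classical one and matches the paper's in outline: locally the Noether--Lefschetz locus is contained in loci cut out by at most $h^{2,0}(S)$ holomorphic period conditions, and then one computes $h^{2,0}(S)=h^0(\var,\cO_\var(n\eta))$ from adjunction and the restriction sequence; this second, cohomological half of your argument is essentially identical to the paper's (which phrases it as ``the fundamental sequence of the divisor $S$, Serre duality, and the acyclicity of $\cO_\var$''). The genuine gap is in the first half. Counting period conditions bounds the codimension of the locus where an integral class \emph{stays of type} $(1,1)$; to conclude anything about $U_\eta(n)$, which is defined by the jump of the \emph{Picard number}, you need the Lefschetz $(1,1)$ theorem on $S$ --- i.e., that an integral $(1,1)$ class on $S$ is algebraic. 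Since $S$ is only quasi-smooth (a singular projective orbifold), this is not the classical statement, and it is exactly the point you flag as ``the main obstacle'' and then defer to ``the framework of \cite{BG1}'' without resolving it. That deferral is not a proof: the needed statement is not a black box imported from \cite{BG1}, it is the actual content of the paper's argument. The paper handles it by taking a resolution of singularities $\rho\colon S'\to S$, observing that $\rho^\ast\colon H^2(S,\C)\to H^2(S',\C)$ is injective and that by Steenbrink \cite{SteenVanishing} the Hodge decomposition of $H^2(S',\C)$ induces one on $H^2(S,\C)$, so that a class in $H^{1,1}(S)\cap H^2(S,\Z)$, viewed on the smooth surface $S'$, is algebraic by the classical $(1,1)$ theorem and hence is algebraic on $S$. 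Without this step (or an equivalent orbifold Hodge-theoretic substitute), ``remains of type $(1,1)$'' and ``is algebraic'' are not known to coincide, and your count of equations bounds the codimension of the wrong locus.

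Two secondary points. First, your vanishing justification is slightly off: Demazure's theorem applies to nef classes, and $n\eta-\beta=-\beta_0=K_\var$ is not nef, so it does not give $H^1(\var,\cO_\var(-\beta_0))=0$ directly; likewise \eqref{bottdual} would require $\beta_0$ ample, i.e.\ $\var$ Fano, which is not assumed. The correct route is Serre duality, $H^1(\var,\omega_\var)\cong H^2(\var,\cO_\var)^\vee$ and $H^0(\var,\omega_\var)\cong H^3(\var,\cO_\var)^\vee$, together with the acyclicity of $\cO_\var$ on a complete toric variety --- which is precisely what the paper invokes. Second, the IVHS/Jacobian-ring identifications you lead with (tangent space $\cong R(f)_\beta$, $H^{2,0}(S)\cong R(f)_{n\eta}$) are not needed for the \emph{upper} bound at all: the period-counting argument is purely Hodge-theoretic, and importing the Jacobian-ring machinery only adds a further orbifold verification you would then also have to discharge. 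The paper's proof avoids it entirely; that machinery belongs to the lower-bound results (Theorem \ref{main} and its corollaries), not to Proposition \ref{above}.
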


\begin{proof} The  classic inequality $\operatorname{codim}{U}_\eta (n) \leq h^{2,0}(S)$  for
surfaces in $\mathbb P^3$ \cite[pp.\ 71-72] {CGGH} relies on the fact that a 2-cycle in a smooth projective surface $S$ is algebraic if and only if it is orthogonal to $H^{2,0}(S)$. In our case, a general surface $S$ in the linear system $\beta$ is quasi-smooth and is a projective orbifold. However the argument may be repeated in this case by taking a resolution of singularities $\rho\colon S' \to S$.  The induced morphism $\rho^\ast \colon H^2(S,\mathbb C) \to H^2(S',\mathbb C)$ is injective and the Hodge decomposition of   $H^2(S',\mathbb C) $ induces a Hodge decomposition of $H^2(S,\mathbb C)$ \cite{SteenVanishing}. A class
in $H^{1,1}(S)\cap H^2(S,\mathbb Z)$ regarded in $H^{1,1}(S') \cap H^2(S',\mathbb Z)$
is algebraic and so is algebraic in $S$ as well (the other implication is obvious). 
To see that $h^{2,0}(S)=h^0(\var, \cO_\var(n \eta))$ one uses the fundamental sequence of the 
 divisor $S$, with Serre duality and the 
acyclicity of the sheaf $\cO_\var$. Note that the canonical class of $S$ is Cartier by the adjunction formula, since $\beta_0$ and $\beta$ are Cartier.
\end{proof}

 
\subsection{Some preliminary Lemmas.}\label{preliminary}
 To give an estimate on the codimension of ${U}_\eta(n)$ we need some preliminary Lemmas.
Let $\var$ be a   simplicial toric   threefold.  

In the following we assume that $\eta$ is a  {primitive} ample 0-regular  Cartier class,  and $\beta\in \operatorname{Pic}(\var)$   a  {\it nef} Cartier class that satisfies $\beta - \beta_0 = n\eta$ for some $n \geq 0$, with $\beta_0=-K_\var$.
We also assume  that the multiplication morphism $S_\beta\otimes S_{k\eta} \to S_{\beta+k\eta}$
is surjective for all $k\ge 0$,  and    that  $h^1(\var, \cO_\var (\beta - \eta ))=h^2(\var, \cO_\var (\beta - \eta ))=h^2(\var, \cO_\var (\beta - 2\eta ))=0$ (see Section \ref{othervanishings}).

Recall that $S_\beta=H^0(\var,\cO_\var(\beta))$. Since $\beta$ is nef, it is globally generated, and we have an exact sequence
\begin{equation} \label{M0} 0\to M_0  \to S_\beta\otimes \cO_\var \to  \cO_\var (\beta) \to 0 \end{equation}
where $M_0$ is locally free.  Then:

\begin{lemma} \label{l0}  $H^q(\var,M_0(k \eta)) = 0$ for
all  $q\ge 1$ and $k+q\ge 1$. In particular, $M_0$ is 1-regular.
\end{lemma}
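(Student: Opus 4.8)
The plan is to extract the desired cohomology vanishing directly from the short exact sequence \eqref{M0} by twisting it with $\cO_\var(k\eta)$ and passing to the long exact sequence in cohomology. Tensoring with the line bundle $\cO_\var(k\eta)$ (which is exact, $\cO_\var(k\eta)$ being a line bundle) gives
\begin{equation}
0 \to M_0(k\eta) \to S_\beta \otimes \cO_\var(k\eta) \to \cO_\var(\beta + k\eta) \to 0,
\end{equation}
and from this I would read off the relevant piece of the long exact sequence, namely
\begin{equation}
H^{q-1}(\var,\cO_\var(\beta+k\eta)) \to H^q(\var,M_0(k\eta)) \to S_\beta \otimes H^q(\var,\cO_\var(k\eta)).
\end{equation}
So to prove $H^q(\var,M_0(k\eta))=0$ it suffices to show that the term on the right vanishes and that the connecting map from $H^{q-1}(\var,\cO_\var(\beta+k\eta))$ is surjective (or, more simply, that both flanking terms vanish).

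Next I would split the argument according to the value of $q$. For $q \ge 2$ both flanking groups are cohomology of line bundles, so the vanishing should follow from the toric vanishing theorems of Section \ref{Zariski}: the term $H^q(\var,\cO_\var(k\eta))$ vanishes by Demazure's vanishing (the $p=0$ case of \eqref{MavlyutovNef}) whenever $k\eta$ is nef, i.e. $k\ge 0$, and similarly $H^{q-1}(\var,\cO_\var(\beta+k\eta))$ vanishes for $q-1\ge 1$ because $\beta+k\eta$ is nef (indeed ample for $k\ge 0$ since $\beta$ is nef and $\eta$ ample). The only delicate range is $q=1$ together with $k=0$ (the boundary case $k+q=1$ allowed by the statement), where the sequence reads $S_\beta\otimes H^0(\var,\cO_\var)\to H^0(\var,\cO_\var(\beta))\to H^1(\var,M_0)\to S_\beta\otimes H^1(\var,\cO_\var)$. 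Here the rightmost term vanishes by Demazure, and the first map is exactly the evaluation/multiplication map $S_\beta\otimes\C \to S_\beta = H^0(\var,\cO_\var(\beta))$, which is an isomorphism; hence $H^1(\var,M_0)=0$.

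For the remaining case $q=1$ with $k\ge 1$, the segment is $S_\beta\otimes H^0(\var,\cO_\var(k\eta)) \xrightarrow{\mu} H^0(\var,\cO_\var(\beta+k\eta)) \to H^1(\var,M_0(k\eta)) \to S_\beta\otimes H^1(\var,\cO_\var(k\eta))$. The last term vanishes by Demazure since $k\eta$ is nef, so it suffices to show the multiplication map $\mu$ is surjective. But this is precisely the surjectivity of $S_\beta\otimes S_{k\eta}\to S_{\beta+k\eta}$ that we have assumed among the standing hypotheses of this subsection. Therefore $H^1(\var,M_0(k\eta))=0$ for all $k\ge 1$, completing all cases. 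Finally, I would remark that the conclusion $H^q(\var,M_0(k\eta))=0$ for all $q\ge 1$ and $k+q\ge 1$ says exactly that $H^q(\var,M_0\otimes\eta^{1-q})=0$ for every $q\ge 1$ (take $k=1-q$), which is the definition of $M_0$ being $1$-regular with respect to $\eta$.

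I expect the only real subtlety to be bookkeeping the index ranges so that each flanking cohomology group falls under the correct vanishing theorem, and in particular making sure the borderline case $(q,k)=(1,0)$ is handled by the isomorphism $S_\beta\otimes\C\cong S_\beta$ rather than by a multiplication surjectivity statement; the genuinely nontrivial input is the assumed surjectivity of the multiplication maps, which is what powers the $q=1$, $k\ge 1$ case, everything else being a direct application of Demazure/Bott-Danilov-Steenbrink vanishing.
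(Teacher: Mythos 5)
Your setup---twisting \eqref{M0} by $\cO_\var(k\eta)$ and reading off the long exact sequence---is the same as the paper's, and your handling of $q=1$ (multiplication surjectivity for $k\ge 1$, the evaluation isomorphism at $k=0$) and of $q\ge 2$ with $k\ge 0$ (Demazure) is correct. But there is a genuine gap: the lemma asserts vanishing for all $q\ge 1$ and $k+q\ge 1$, which on a threefold includes the \emph{negative} twists $(q,k)=(2,-1)$, $(3,-1)$ and $(3,-2)$, and your argument never touches these. You explicitly restrict Demazure to ``$k\ge 0$'' and then declare $(q,k)=(1,0)$ the only delicate case; that is exactly where the proof breaks down. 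Worse, these missing cases are precisely the ones you need for the final clause: $1$-regularity of $M_0$ means $H^q(\var,M_0((1-q)\eta))=0$ for $q>0$, i.e.\ $k=1-q$, which is $k=-1$ for $q=2$ and $k=-2$ for $q=3$. So as written your argument proves the vanishing only for $k\ge 0$ and does not establish the ``in particular, $M_0$ is $1$-regular'' statement at all---which is the part of the lemma that Lemma \ref{l1} and everything downstream actually uses.

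For the negative twists neither flanking line bundle is nef, so Demazure is unavailable, and one must invoke the additional standing hypotheses of Section \ref{preliminary} that your proof never uses. The left-hand terms $H^{1}(\var,\cO_\var(\beta-\eta))$ (case $q=2$, $k=-1$), $H^{2}(\var,\cO_\var(\beta-\eta))$ (case $q=3$, $k=-1$) and $H^{2}(\var,\cO_\var(\beta-2\eta))$ (case $q=3$, $k=-2$) vanish by the assumed vanishings \eqref{extra}; the right-hand terms $H^{2}(\var,\cO_\var(-\eta))$, $H^{3}(\var,\cO_\var(-\eta))$ and $H^{3}(\var,\cO_\var(-2\eta))$ vanish because $\eta$ is assumed $0$-regular, combined with Theorem \ref{laza} (a $0$-regular bundle is also $1$-regular, which gives $H^3(\var,\cO_\var(-\eta))=0$). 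This is how the paper completes the case analysis $q=1,2,3$; without these inputs the statement for $k<0$, and hence the $1$-regularity of $M_0$, remains unproved.
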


\begin{proof}  
We twist the exact sequence \eqref{M0} by $\cO_\var(k\eta)$ and take a segment of the long exact sequence of cohomology:
\begin{multline} \label{long}
\dots
\to  S_\beta \otimes H^{q-1}(\var,\cO_\var(k\eta))
\stackrel{m}{\relbar\joinrel\longrightarrow}  H^{q-1}(\var,\cO_\var(\beta+k\eta) )\\
\to H^{q}(\var,M_0(k\eta))
\to  S_\beta \otimes H^{q}({\var},\cO_\var(k\eta) )\to \dots
\end{multline}
We consider separately  the cases $q=1,2,3$.

$q=1$. The morphism $m$  is the multiplication morphism $S_\beta\otimes S_{k\eta} \to S_{\beta+k\eta}$ which is surjective by hypothesis. Moreover $H^{1}({\var},\cO_\var(k\eta) )=0$
by the Demazure vanishing theorem  for $k\ge 0$.

$q=2$. If $k \ge 0$,  $H^{1}({\var},\cO_\var(\beta+k\eta) )=H^{2}(\cO_\var (k\eta))=0 $ by the Demazure vanishing theorem, since $\beta+k\eta$  and $\eta$ are nef. 
For  $k=-1$, $H^{1}({\var},\cO_\var(\beta-\eta) )=0$ by assumption, and $H^{2}(\var,\cO_\var(-\eta)=0$ because $\eta$ is 0-regular.

$q=3$.  
For $k \geq 0$, $\beta+k\eta$ is nef so $H^{2}({\var},\cO_\var(\beta+k\eta) ) = H^{3 }({\var},\cO_\var(k\eta) ) =0$  again by Demazure's vanishing. For $k=-2, -1$ we have $H^{2}({\var},\cO_\var(\beta+k\eta) )=0$ by hypothesis and 
$H^3(\var, \cO(k\eta))=0$ because $\eta$ is 0-regular and by Theorem \ref{laza}.
 \end{proof}

\begin{lemma}\label{l1} Assume that  $M_0$ is 1-regular.
If $\eta$ is (-1)-regular, the vector bundle $\Lambda^pM_0$ is $p$-regular with respect to $\eta$ for all $p\ge 1$, that is $H^q(\var, \Lambda^p M_0((p-q) \eta)) = 0$ for  $q>0$.

Similarly, if $\eta$ is 0-regular, then   $\Lambda^pM_0$ is $(p+1)$-regular, that is $H^q(\var, \Lambda^p M_0((p+1-q) \eta)) = 0$ for  $q>0$.
\end{lemma}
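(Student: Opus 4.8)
The plan is to deduce the regularity of $\Lambda^p M_0$ from the 1-regularity of $M_0$ by applying Proposition \ref{regular} (the tensor-power statement) together with the observation that an exterior power is a direct summand of a tensor power, at least after inverting small integers, and more robustly by realizing $\Lambda^p M_0$ as a quotient (or subobject) in the Koszul-type filtration of $M_0^{\otimes p}$. First I would record the reduction: Proposition \ref{regular}, applied with $\cF = M_0$ (locally free and $1$-regular) and with $L = \cO_\var(\eta)$ of the appropriate regularity, already tells us that $M_0^{\otimes p}$ is $(p+q)$-regular, where $q=0$ in the $(-1)$-regular case and $q=1$ in the $0$-regular case. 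So the content of the lemma is exactly that $\Lambda^p M_0$ inherits the regularity of $M_0^{\otimes p}$.

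The cleanest route, and the one I would carry out, mirrors the proof of Proposition \ref{regular} directly rather than passing through $M_0^{\otimes p}$ as a black box. Tensoring $p$ copies of the resolution \eqref{resolution} for $\cF = M_0$ and $m=1$ gives a resolution of $M_0^{\otimes p}$ by sums of powers $L^{-p-j}$; the alternating/antisymmetrization functor is exact on locally free sheaves, so applying it produces a resolution
\begin{equation}
\dots \to \bigoplus (L^{-p-1}) \to \bigoplus(L^{-p}) \to \Lambda^p M_0 \to 0.
\end{equation}
Then I would split this into short exact sequences and chase cohomology exactly as in Proposition \ref{regular}, using the Remark after the Corollary: since $L$ is $0$-regular (resp.\ $(-1)$-regular), the term $L^{-p-j}$ is $(p+j+1)$-regular (resp.\ $(p+j)$-regular). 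Feeding these regularities through the long exact sequences yields that $\Lambda^p M_0$ is $(p+1)$-regular in the $0$-regular case and $p$-regular in the $(-1)$-regular case, which is the claim, and unwinding the definition of $m$-regularity with respect to $\eta$ gives the stated vanishings $H^q(\var,\Lambda^p M_0((p-q)\eta))=0$ and $H^q(\var,\Lambda^p M_0((p+1-q)\eta))=0$ for $q>0$.

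The main obstacle I anticipate is justifying that the exterior power of the tensored resolution is itself a genuine resolution by sums of line bundle powers, i.e.\ that taking $\Lambda^p$ of the exact complex $(L^{-\bullet})^{\otimes p}\to M_0^{\otimes p}$ stays exact and lands in the desired summands. On a variety with orbifold (quotient) singularities one must take care that $M_0$ is locally free (which holds, by \eqref{M0}) so that the Koszul/Schur functors behave exactly as in the smooth case and exactness is preserved; the bookkeeping of which twists $L^{-p-j}$ appear, and confirming their regularity indices via the Remark, is then routine. An alternative, if one prefers to avoid the exterior-power resolution, is to invoke $\Lambda^p M_0$ as a direct summand of $M_0^{\otimes p}$ and apply Proposition \ref{regular} together with the elementary fact that a direct summand of an $m$-regular sheaf is $m$-regular; this sidesteps the exactness question entirely, at the cost of the characteristic-zero splitting of Schur functors, which is harmless here since everything is over $\C$.
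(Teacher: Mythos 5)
Your proposal contains the paper's proof, but only as the afterthought in your final sentence. The paper's entire argument is the one-liner you relegate to ``an alternative'': $\Lambda^p M_0$ is a direct summand of $M_0^{\otimes p}$ (harmless over $\C$, since $M_0$ is locally free and one has the antisymmetrizer idempotent $\tfrac{1}{p!}\sum_\sigma \operatorname{sgn}(\sigma)\,\sigma$), cohomology commutes with direct sums so a summand of an $m$-regular sheaf is $m$-regular, and Proposition \ref{regular} with $q=0$ (resp.\ $q=1$) gives that $M_0^{\otimes p}$ is $p$-regular (resp.\ $(p+1)$-regular). That is the whole proof, and your version of it is correct.

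The route you actually propose to carry out, however, has a genuine gap as stated. The step ``the alternating/antisymmetrization functor is exact on locally free sheaves, so applying it produces a resolution $\dots \to \bigoplus(L^{-p-1}) \to \bigoplus(L^{-p}) \to \Lambda^p M_0 \to 0$'' is not a valid operation: $\Lambda^p$ is not an additive functor, so it cannot be applied term-by-term to a complex, and the exterior power of an exact complex is not exact in any naive sense (already $\Lambda^p$ of a short exact sequence of bundles yields a filtration with subquotients $\Lambda^i\otimes\Lambda^{p-i}$, not another short exact sequence). The way to make your idea precise is to let $S_p$ act on the $p$-fold tensor product of the resolution \eqref{resolution} (with Koszul signs) and pass to the image of the sign idempotent: a direct summand of an exact complex is exact, its augmentation is $\Lambda^p M_0$, and its terms are direct summands of sums of powers of $L$, hence have the required cohomology vanishing. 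But notice that at this point you are running the direct-summand argument anyway, just at the level of complexes instead of at the level of the single sheaf $M_0^{\otimes p}$, where Proposition \ref{regular} has already done the work. So the clean and complete proof is the short one; the resolution-based detour either fails as written or collapses back into it.
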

\begin{proof} 
$\Lambda^pM_0$ is a direct summand of $M_0^{\otimes p}$, so that the Lemma follows from   Proposition \ref{regular}.
\end{proof}

Let $W$ be a base point free subspace of $S_\beta=H^0(\var,\cO_\var(\beta))$, and let
$$ W = W_c \subset .... \subset W_0 = S_\beta $$
be a flag of subspaces of $S_\beta$ whose quotients  $W_i/W_{i+1}$ are all of dimension $1$ (thus, $c=\dim S_\beta-\dim W$).  All subspaces $W_i$ are base point free (since they
contain $W$), so that there are exact sequences
\begin{equation} \label{M1}
0 \to M_i \to W_i\otimes \cO_\var \to  \cO_\var(\beta) \to 0
\end{equation}
with $M_i$ locally free. There are injective morphisms $M_{i+1} \to M_i$, and $M_i/M_{i+1}\simeq  \cO_\var $.

The following Corollary is proved as  Lemma 2 of \cite{Green1}.

\begin{corol}\label{l2}  Assume Lemma  \ref{l1} holds. If $\eta$ is  (-1)-regular, 
$H^q(\var,\Lambda^pM_i(k \eta)) = 0$ for   where $0 \le i \le c=\dim S_\beta-\dim W$, $q\ge 1$, $p\ge 1$,
 $k+q\ge p+i$.
  If  $\eta$ is 0-regular, one has the same vanishing for $q\ge 1$, $p\ge 1$,
 $k+q\ge p+i+1$.
\end{corol}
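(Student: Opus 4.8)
The plan is to prove Corollary \ref{l2} by induction on the codimension $i$ of the subspace $W_i$ in the flag, using Lemma \ref{l1} as the base case $i=0$. The starting point is the short exact sequence $M_i/M_{i+1}\simeq \cO_\var$ relating the bundles $M_{i+1}$ and $M_i$ in the flag. Since all the $W_i$ are base point free and the successive quotients $W_i/W_{i+1}$ are one-dimensional, there is an exact sequence $0 \to M_{i+1} \to M_i \to \cO_\var \to 0$, and I would take its $p$-th exterior power. For a short exact sequence of locally free sheaves $0\to A \to B \to \cO_\var \to 0$ with a line-bundle (here trivial) quotient, the exterior power $\Lambda^p B$ admits a two-step filtration with graded pieces $\Lambda^p A$ and $\Lambda^{p-1}A \otimes \cO_\var \simeq \Lambda^{p-1}A$, yielding a short exact sequence
\begin{equation}\label{extpow}
0 \to \Lambda^p M_{i+1} \to \Lambda^p M_i \to \Lambda^{p-1} M_{i+1} \to 0 \,.
\end{equation}
This is exactly the mechanism of Lemma 2 in \cite{Green1}, adapted to the toric setting.

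The inductive engine is then to twist \eqref{extpow} by $\cO_\var(k\eta)$ and pass to the long exact cohomology sequence. Treating the $(-1)$-regular case, the claim is that $H^q(\var,\Lambda^p M_i(k\eta))=0$ whenever $q\ge 1$, $p\ge 1$, and $k+q\ge p+i$. First I would run an induction on $i$, assuming the vanishing holds for index $i$ (all relevant $p$ and $k$) and deducing it for $i+1$; but since \eqref{extpow} increases $i$ while relating $\Lambda^p$ to $\Lambda^{p-1}$, the cleanest bookkeeping is a double induction, with $i$ as the outer variable and $p$ as an inner variable. From \eqref{extpow} twisted by $k\eta$, the middle term $H^q(\var,\Lambda^p M_i(k\eta))$ is squeezed between $H^q(\var,\Lambda^p M_{i+1}(k\eta))$ and $H^{q-1}(\var,\Lambda^{p-1}M_{i+1}(k\eta))$; equivalently, to bound $H^q(\Lambda^p M_{i+1}(k\eta))$ I use the outgoing map from $H^q(\Lambda^p M_i(k\eta))$ and the incoming group $H^{q}(\Lambda^{p-1}M_{i+1}(k\eta))$, together with connecting terms. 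The numerology is arranged so that the inequality $k+q\ge p+i$ for the target $\Lambda^p M_{i+1}$ translates into $k+q \ge p+(i-1)$ and $k+(q{-}1)\ge (p{-}1)+(i{-}1)$ type inequalities for the two neighbours, each of which is covered either by the inductive hypothesis on $i$ or by Lemma \ref{l1} at the base. The $0$-regular case is formally identical with every regularity threshold shifted up by one, i.e.\ $k+q\ge p+i+1$, reflecting the remark after the resolution corollary that $L^{-m-j}$ gains one unit of regularity when $\eta$ is only $0$-regular rather than $(-1)$-regular.

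The main obstacle I anticipate is purely combinatorial rather than geometric: keeping the inequalities straight across the double induction so that every cohomology group appearing in the long exact sequence of \eqref{extpow} lands in the range where it is known to vanish, with no off-by-one slippage between the two regularity regimes. Concretely, I must verify at each step that the two flanking groups $H^q(\var,\Lambda^p M_i(k\eta))$ and $H^{q-1}(\var,\Lambda^{p-1}M_{i+1}(k\eta))$ both vanish under the hypotheses on $(p,q,k,i)$, so that the middle group vanishes by exactness; the degenerate cases $p=1$ (where $\Lambda^{p-1}M_{i+1}=\cO_\var$ and one invokes Demazure or $0$-regularity of $\eta$ directly) and $i=0$ (where Lemma \ref{l1} applies) need to be handled as separate base cases. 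Since the paper explicitly states that the argument follows Lemma 2 of \cite{Green1}, I would present \eqref{extpow} and the induction structure in detail and leave the verification of the numerical inequalities to the reader, as the cited reference carries them out in the projective-space case and the toric generalization changes nothing beyond replacing Serre/Kodaira vanishing with the toric vanishings already invoked in Lemma \ref{l0} and Lemma \ref{l1}.
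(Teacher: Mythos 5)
Your overall mechanism --- the flag, the exterior-power exact sequence
$0 \to \Lambda^p M_{i+1} \to \Lambda^p M_i \to \Lambda^{p-1} M_{i+1} \to 0$,
and a double induction on $i$ and $p$ with Lemma \ref{l1} as the base --- is indeed the skeleton of Lemma 2 of \cite{Green1}, which is all the paper's proof consists of (a one-line citation). However, your inner induction runs in the wrong direction, and this is a genuine gap, not bookkeeping. Your reduction controls $H^q(\var,\Lambda^pM_{i+1}(k\eta))$ by the outer-induction term $H^q(\var,\Lambda^pM_i(k\eta))$ together with the source $H^{q-1}(\var,\Lambda^{p-1}M_{i+1}(k\eta))$ of the connecting map, so each step lowers both $p$ and $q$ by one. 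Whenever $p\ge q$ this chain reaches $q=1$, where the group you must kill is an $H^0$, namely $H^0(\var,\Lambda^{p-q}M_{i+1}(k\eta))$; this does not vanish (already for $p=q$ it is $H^0(\var,\cO_\var(k\eta))\ne 0$ for $k\ge 0$). Equivalently, at $q=1$ you need surjectivity of $H^0(\var,\Lambda^{p}M_i(k\eta))\to H^0(\var,\Lambda^{p-1}M_{i+1}(k\eta))$, which is not available --- it is the kind of statement that \emph{follows} from the Corollary rather than feeding into it. Your list of degenerate cases ($p=1$ via Demazure, $i=0$ via Lemma \ref{l1}) misses exactly this: Demazure handles $H^{q-1}(\var,\cO_\var(k\eta))$ only when $q\ge 2$. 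Note that the one instance the paper actually uses downstream (Proposition \ref{koszul}, i.e.\ $H^1(\var,M_c(n\eta))$, so $p=q=1$, $i=c$) is precisely the case where your scheme breaks on the first step.

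The repair is to run the inner induction \emph{upward}, as Green does: to control $\Lambda^pM_{i+1}$, use the sequence in which it appears as the quotient,
\begin{equation}
0\to \Lambda^{p+1}M_{i+1}\to \Lambda^{p+1}M_i \to \Lambda^pM_{i+1}\to 0 \,,
\end{equation}
whose long exact cohomology sequence traps $H^q(\var,\Lambda^pM_{i+1}(k\eta))$ between $H^q(\var,\Lambda^{p+1}M_i(k\eta))$ and $H^{q+1}(\var,\Lambda^{p+1}M_{i+1}(k\eta))$. The first group vanishes by the outer induction on $i$, since $k+q\ge p+i+1=(p+1)+i$ (in the $(-1)$-regular normalization; the $0$-regular case shifts by one uniformly). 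The second group is the same statement at the same index $i+1$ but with $(p,q)$ replaced by $(p+1,q+1)$, and the inequality $k+q\ge p+i+1$ is invariant under $(p,q)\mapsto(p+1,q+1)$; iterating only increases $q$, so the recursion terminates unconditionally once $q>3=\dim\var$, where all cohomology vanishes. In this direction no $H^0$ terms ever occur and no surjectivity input is needed, which is the whole point of Green's arrangement of the induction.
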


\begin{prop} \label{koszul}  Under the assumption listed at the beginning of Section \ref{preliminary}, let $W$ be a base point free subspace of $S_\beta = H^0(\var,\cO_\var(\beta))$.
The map
\begin{equation}
\label{sk}
W \otimes S_{\beta-\beta_0}\to   S_{2
\beta-\beta_0}
\end{equation}
is surjective if  $\eta$ is (-1)-regular and  $\operatorname{codim} W  \le n$.

If $\eta$ is 0-regular but not (-1)-regular, then the map \eqref{sk} is surjective if $\operatorname{codim} W  \le n -1$.

\end{prop}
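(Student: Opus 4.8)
The plan is to read the map \eqref{sk} off a single long exact cohomology sequence, after which the proposition reduces to one cohomology vanishing already prepared in Corollary \ref{l2}. Write $W = W_c$, so that $c = \operatorname{codim} W$, and start from the exact sequence \eqref{M1} for $i = c$,
\[
0 \to M_c \to W \otimes \cO_\var \to \cO_\var(\beta) \to 0,
\]
in which the surjection is the evaluation map. Using the hypothesis $\beta - \beta_0 = n\eta$, I would twist this by $\cO_\var(\beta-\beta_0) = \cO_\var(n\eta)$; since $2\beta - \beta_0 = \beta + n\eta$, this gives
\[
0 \to M_c(n\eta) \to W \otimes \cO_\var(n\eta) \to \cO_\var(2\beta-\beta_0) \to 0.
\]
Taking global sections, and noting $H^0(\var, W\otimes \cO_\var(n\eta)) = W \otimes S_{\beta-\beta_0}$ and $H^0(\var, \cO_\var(2\beta-\beta_0)) = S_{2\beta-\beta_0}$, the relevant segment of the long exact sequence is
\[
W \otimes S_{\beta-\beta_0} \to S_{2\beta-\beta_0} \to H^1(\var, M_c(n\eta)).
\]
Because the evaluation map induces exactly the multiplication \eqref{sk}, surjectivity of \eqref{sk} follows as soon as $H^1(\var, M_c(n\eta)) = 0$.

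To secure this vanishing I would apply Corollary \ref{l2} with $p = 1$ (so $\Lambda^p M_c = M_c$), $q = 1$, $i = c$, and $k = n$. In the $(-1)$-regular case the required inequality $k+q \ge p+i$ reads $n+1 \ge 1+c$, i.e. $c \le n$, which is the hypothesis $\operatorname{codim} W \le n$. In the $0$-regular case the inequality $k+q \ge p+i+1$ reads $n+1 \ge c+2$, i.e. $c \le n-1$, matching $\operatorname{codim} W \le n-1$. In either regime the corollary yields $H^1(\var, M_c(n\eta)) = 0$, and the proposition follows.

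The substantive work has in fact already been done in Lemmas \ref{l0} and \ref{l1} and Corollary \ref{l2}: once the regularity of the bundles $\Lambda^p M_i$ is in hand, the statement is a direct reading of one cohomology sequence rather than a genuinely new argument. The point I would check most carefully is the numerical bookkeeping, namely that the single-unit shift between the $(-1)$-regular and $0$-regular thresholds in Corollary \ref{l2} propagates exactly to the difference between the codimension bounds $n$ and $n-1$; this is where an off-by-one error would most easily creep in. I would also confirm that the middle arrow of the long exact sequence is literally the multiplication map \eqref{sk}, which it is, since it is induced by the evaluation surjection in \eqref{M1} tensored with $\cO_\var(n\eta)$.
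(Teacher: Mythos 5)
Your proof is correct and is essentially identical to the paper's own argument: both twist the exact sequence \eqref{M1} for $i=c$ by $\cO_\var(\beta-\beta_0)=\cO_\var(n\eta)$, read off the segment $W \otimes S_{\beta-\beta_0} \to S_{2\beta-\beta_0} \to H^1(\var,M_c(n\eta))$ of the long exact sequence, and kill the $H^1$ term via Corollary \ref{l2} with $p=q=1$, $i=c$, $k=n$, yielding the thresholds $c\le n$ and $c\le n-1$ in the $(-1)$-regular and $0$-regular cases respectively. Your numerical bookkeeping matches the paper's exactly, so there is nothing to correct.
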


\begin{proof} From equation \eqref{M1} for $i=c$ we obtain
$$W \otimes S_{\beta-\beta_0} \to   S_{2\beta-\beta_0} \to H^1(\var,M_c(\beta-\beta_0))\,.$$
Since $\beta-\beta_0=n\eta$ and $\eta$ is  (-1)-regular,  the group on the right   vanishes by Corollary \ref{l2} if $n + 1 \ge 1  + c$, i.e., if $ \operatorname{codim} W   \le n  $ (note indeed that
$\operatorname{codim} W = c)$. If $\eta$ is 0-regular, we obtain the condition $\operatorname{codim} W  \le n -1$.
\end{proof}

\subsection{Codimension of the Noether-Lefschetz loci} \label{codimensionNL} We can now prove the lower bounds
on the codimension of the Noether-Lefschetz loci.

\begin{thm}\label{main}Let $\var$ be a    simplicial toric   threefold, $\eta$  an ample  {primitive} Cartier class, $\beta_0=-K_\var$  and $\beta\in \operatorname{Pic}(\var)$    an ample Cartier class that satisfies $\beta - \beta_0 = n\eta$ for some $n \geq 0$.
Assume  that the multiplication morphism $S_\beta\otimes S_{k\eta} \to S_{\beta+k\eta}$
is surjective and also  that  $h^1(\Pt, \cO_\Pt (\beta - \eta ))=h^2(\Pt, \cO_\Pt (\beta - \eta ))=h^2(\Pt, \cO_\Pt (\beta - 2\eta ))=0$. \\
If $\eta$ is (-1)-regular, 
then
\begin{equation}\label{inequalityA3}\operatorname{codim} U_{\eta}(n)  \ \geq  \ n+1.\end{equation}
 If $\eta$ is 0-regular, then
 \begin{equation}\label{inequalityA2}\operatorname{codim} U_{\eta}(n)\geq n.\end{equation}
\end{thm}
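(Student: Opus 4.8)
The plan is to transpose Green's infinitesimal argument \cite{Green2,Green1} to the toric orbifold setting, feeding the variation of Hodge structure of a quasi-smooth surface into the surjectivity statement of Proposition \ref{koszul}. First I would recall the Griffiths-type description of the Hodge structure of $X=\{f=0\}$ already used in \cite{BG1}: for a quasi-smooth surface of degree $\beta$ in the simplicial toric threefold $\var$, the Batyrev--Cox theory together with the orbifold Hodge decomposition identifies the primitive Hodge pieces of $H^2(X)$ with graded components of the Jacobian ring,
\[
H^{2,0}(X)\cong R(f)_{\beta-\beta_0},\quad
H^{1,1}_{\mathrm{prim}}(X)\cong R(f)_{2\beta-\beta_0},\quad
H^{0,2}(X)\cong R(f)_{3\beta-\beta_0},
\]
compatibly with $h^{2,0}(X)=h^0(\var,\cO_\var(n\eta))$ from Proposition \ref{above}. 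Moreover $R(f)$ carries a perfect toric Macaulay pairing with socle in degree $4\beta-2\beta_0$, under which ring multiplication realizes cup product; in particular the induced pairing on $R(f)_{2\beta-\beta_0}$ is the self-duality of $H^{1,1}_{\mathrm{prim}}$, and a deformation direction $\phi\in R(f)_\beta\cong T_{[X]}\mathcal M_\beta$ acts on $H^{1,1}_{\mathrm{prim}}$ by multiplication $\mu_\phi\colon R(f)_{2\beta-\beta_0}\to R(f)_{3\beta-\beta_0}$.

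Next I would set up the infinitesimal obstruction. Let $X_0$ be a very general surface in an irreducible component $U'$ of $U_\eta(n)$; since the Picard number jumps along $U'$, there is a class $\lambda\in H^{1,1}(X_0,\Z)$ not pulled back from $\var$, and its primitive part $\bar\lambda\in R(f)_{2\beta-\beta_0}$ is nonzero (otherwise $\lambda$ would be proportional to the polarization). The standard infinitesimal Noether--Lefschetz computation shows that the Zariski tangent space $W:=T_{[X_0]}U'$ lies in the kernel of $\mu_{\bar\lambda}\colon R(f)_\beta\to R(f)_{3\beta-\beta_0}$, $\phi\mapsto\phi\,\bar\lambda$. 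Since $\dim U'\le\dim W$, we have $\operatorname{codim}U'\ge\operatorname{codim}W$, so it suffices to bound $\operatorname{codim}W$ from below.

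The heart of the argument is a proof by contradiction. Assume first that $\eta$ is $(-1)$-regular and, for contradiction, that $\operatorname{codim}U'\le n$; then $\operatorname{codim}W\le n$ as well. Let $\widetilde W\subseteq S_\beta$ be the preimage of $W$, so $\operatorname{codim}_{S_\beta}\widetilde W\le n$; moreover $\widetilde W$ is base point free, since it contains $J(f)_\beta$ and $X_0$ is quasi-smooth (the partials of $f$ have no common zero outside the irrelevant locus). Proposition \ref{koszul} then makes $\widetilde W\otimes S_{\beta-\beta_0}\to S_{2\beta-\beta_0}$ surjective, so after projecting to $R(f)$ every $\mu\in R(f)_{2\beta-\beta_0}$ can be written $\mu=\sum_i\bar a_i\,\bar b_i$ with $\bar a_i\in W$ and $\bar b_i\in R(f)_{\beta-\beta_0}$. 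Using associativity of the Macaulay pairing and the relation $W\bar\lambda=0$,
\[
\langle\bar\lambda,\mu\rangle
=\sum_i\langle\bar\lambda,\bar a_i\,\bar b_i\rangle
=\sum_i\langle\bar\lambda\,\bar a_i,\bar b_i\rangle
=0,
\]
so $\bar\lambda$ is orthogonal to all of $R(f)_{2\beta-\beta_0}$. Perfectness of the pairing forces $\bar\lambda=0$, contradicting the choice of $\lambda$; hence $\operatorname{codim}U'\ge n+1$. If $\eta$ is only $0$-regular, Proposition \ref{koszul} yields the same surjectivity under the weaker hypothesis $\operatorname{codim}W\le n-1$, so running the identical argument under the assumption $\operatorname{codim}U'\le n-1$ again forces $\bar\lambda=0$ and gives $\operatorname{codim}U'\ge n$.

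I expect the main obstacle to be the first step rather than the final duality computation. One must be sure that the Griffiths isomorphisms above, the identification of ring multiplication with cup product, and the perfect Macaulay pairing with socle degree $4\beta-2\beta_0$ all persist for quasi-smooth hypersurfaces in the \emph{singular} toric variety $\var$, with exactly these shifted degrees; this is precisely where the orbifold Hodge theory of \cite{SteenVanishing} and the toric residue/Macaulay duality enter, and where quasi-smoothness of $X_0$ is used (both for the Hodge decomposition and for the base-point-freeness of $\widetilde W$). Once these structural inputs are granted, Proposition \ref{koszul} does exactly the required job of guaranteeing that a tangent space of small codimension generates $R(f)_{2\beta-\beta_0}$ over $R(f)_{\beta-\beta_0}$, and the rest reduces to the formal associativity identity displayed above.
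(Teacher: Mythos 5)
Your proposal is correct and takes essentially the same route as the paper: both arguments identify the tangent space to the Noether--Lefschetz locus with a subspace of $R(f)_\beta$, pass to its preimage $\tilde T_\beta$ in $S_\beta$ (base point free because it contains the Jacobian ideal piece $J(f)_\beta$, by quasi-smoothness), assume for contradiction that the codimension bound fails, and invoke Proposition \ref{koszul} with $W=\tilde T_\beta$ to get surjectivity of $\tilde T_\beta\otimes S_{\beta-\beta_0}\to S_{2\beta-\beta_0}$. The only difference is one of packaging: where you spell out the concluding Hodge-theoretic step (Griffiths-type identifications, multiplication realizing cup product, and nondegeneracy of the pairing on the primitive $(1,1)$ part forcing $\bar\lambda=0$), the paper compresses exactly this argument into a citation of the proof of Lemma 3.7 of \cite{BG1}, which supplies the toric orbifold inputs you flag as the main obstacle.
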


 \begin{proof} Let $T_\beta$ be the tangent space to $U_{\eta}(n)$ at $[X]$. Since $T_{[X]}\mathcal M_\beta$ may be identified with the summand $R_\beta$ of the Jacobian ring of $X$ \cite{BaCox94}, we may take the inverse image $\tilde T_\beta$ of $T_\beta$ in the summand $S_\beta$ of the Cox ring of $X$. Now $\tilde T_\beta$ contains the summand $J_\beta$ of the Jacobian ideal of $X$, which is a base point free linear system because $X$ is quasi-smooth, hence $\tilde T_\beta$  is base point free as well.

Let us suppose that $\eta$ is (-1)-regular,   assume   that $\operatorname{codim}(U_{\eta}(n)) \le n$, and apply Proposition \ref{koszul} by taking $W=\tilde T_\beta$.
The multiplication morphism $\tilde T_\beta\otimes S_{\beta-\beta_0}\to   S_{2\beta-\beta_0}$ is surjective. The proof of  Lemma 3.7 in \cite{BG1}  gives that the Picard number of the surfaces
in $U_{\eta}(n)$ coincides with the Picard number of $\var$. But this contradicts the definition of $U_{\eta}(n)$.
The same happens  with respect to the condition $\operatorname{codim}(U_{\eta}(n)) \le n-1$
 when  $\eta$ is 0-regular.
\end{proof}

\begin{thm}\label{mainR} Let $\var$ be a   simplicial toric threefold, $\eta$  an ample {primitive} Cartier class, $\beta_0=-K_\var$, $\beta\in \operatorname{Pic}(\var)$   an ample Cartier class that satisfies $\beta - \beta_0 = n\eta$ for some $n \geq 0$. Assume  that $\beta$ is 0-regular with respect to $\eta$.
If $\eta$ is (-1)-regular,  
then
\begin{equation}\label{inequalityA3}\operatorname{codim} U_{\eta}(n) \geq n+1.\end{equation}
 If $\eta$ is 0-regular, then
 \begin{equation}\label{inequalityA2}\operatorname{codim} U_{\eta}(n)\geq n.\end{equation}
\end{thm}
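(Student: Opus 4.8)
The plan is to recognize that Theorem \ref{mainR} is simply Theorem \ref{main} with its two technical hypotheses---surjectivity of the multiplication maps $S_\beta \otimes S_{k\eta} \to S_{\beta+k\eta}$ and the three vanishings in \eqref{extra}---repackaged into the single, more transparent assumption that $\beta$ is $0$-regular with respect to $\eta$. Accordingly, the strategy is to verify that $0$-regularity of $\beta$ supplies exactly those hypotheses, and then invoke Theorem \ref{main} verbatim. No new geometry is needed; the whole content is a matter of matching hypotheses.

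Concretely, first I would note that an ample $\beta$ is in particular nef while $\eta$ is ample, so the hypotheses of Proposition \ref{extrareg} are met. Part (i) of that proposition then yields the required vanishings $h^1(\var, \cO_\var(\beta-\eta)) = h^2(\var, \cO_\var(\beta-\eta)) = h^2(\var, \cO_\var(\beta-2\eta)) = 0$, and part (ii) yields the surjectivity of $S_\beta \otimes S_{k\eta} \to S_{\beta+k\eta}$ for all $k\ge 0$. With both assumptions of Theorem \ref{main} in hand, its two conclusions transfer immediately: $\operatorname{codim} U_\eta(n) \ge n+1$ when $\eta$ is $(-1)$-regular, and $\operatorname{codim} U_\eta(n)\ge n$ when $\eta$ is $0$-regular, which is precisely \eqref{inequalityA3} and \eqref{inequalityA2}.

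Because the argument is a formal reduction, I do not expect any genuine obstacle at this stage; the substance has already been extracted into Theorem \ref{main} and Proposition \ref{extrareg}. The one chain of implications worth double-checking sits inside Proposition \ref{extrareg}: that $0$-regularity of $\beta$ forces $1$-regularity via part (iii) of Theorem \ref{laza}, from which both the vanishings (by the definition of regularity) and the surjectivity of the multiplication maps (by part (ii) of Theorem \ref{laza}) follow. Should one prefer not to cite Theorem \ref{main} as a black box, I would instead inline its proof: take $W = \tilde T_\beta$, the base-point-free preimage in $S_\beta$ of the tangent space to $U_\eta(n)$, use Proposition \ref{koszul} to force surjectivity of $\tilde T_\beta \otimes S_{\beta-\beta_0} \to S_{2\beta-\beta_0}$ under the assumption that the codimension is too small, and derive a contradiction from the infinitesimal Noether-Lefschetz argument of \cite{BG1}, which shows the Picard number cannot then jump.
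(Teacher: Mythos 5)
Your proof is correct and takes essentially the same route as the paper, whose entire argument reads ``Follows from Theorem \ref{main}, Proposition \ref{extrareg} and Theorem \ref{BGR}.'' The only citation you omit is Theorem \ref{BGR}, which the paper includes because Definition \ref{defvarie} of $U_{\eta}(n)$ presupposes that a very general surface in the linear system $\beta$ has the same Picard number as $\var$; since Theorem \ref{BGR} itself follows from the same multiplication-map surjectivity supplied by Proposition \ref{extrareg}, this is a bookkeeping omission rather than a gap.
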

\begin{proof} Follows from Theorem \ref{main}, Proposition \ref{extrareg} and  Theorem \ref{BGR}.
\end{proof}

\begin{corol}\label{Fano} Let $\var$ be a  simplicial Fano toric threefold, $\eta$  an ample  {primitive} Cartier class, $\beta_0=-K_\var$, $\beta\in \operatorname{Pic}(\var)$   a Cartier class that satisfies $\beta - \beta_0 = n\eta$ for some $n \geq 3$.  If $\eta$ is $(-1)$-regular, then
\begin{equation}\label{inequalityA3}\operatorname{codim} U_{\eta}(n) \geq n+1.\end{equation}
 If $\eta$ is 0-regular, then
 \begin{equation}\label{inequalityA2}\operatorname{codim} U_{\eta}(n)\geq n.\end{equation}
\end{corol}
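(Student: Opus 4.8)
The plan is to deduce Corollary \ref{Fano} as a direct specialization of Theorem \ref{mainR} to the Fano case, so essentially no new cohomological computation is needed: the entire task is to check that, when $\var$ is Fano and $n \geq 3$, the two standing hypotheses of Theorem \ref{mainR}—ampleness of $\beta$ and $0$-regularity of $\beta$ with respect to $\eta$—hold automatically. Since Theorem \ref{mainR} already internalizes (via Theorem \ref{BGR}) the requirement from Definition \ref{defvarie} that very general surfaces in the system $\beta$ have the Picard number of $\var$, once these two hypotheses are in place the conclusion transfers verbatim.

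First I would verify that $\beta$ is ample. Because $\var$ is Fano, the anticanonical class $\beta_0 = -K_\var$ is ample by definition; $\eta$ is ample by assumption, and $n \geq 3 > 0$, so $n\eta$ is ample as well. Hence $\beta = \beta_0 + n\eta$ is a sum of ample Cartier classes and is therefore itself ample (on a complete toric variety ampleness is strict convexity of the support function, which is preserved under addition). This supplies the ampleness hypothesis of Theorem \ref{mainR}, while $n \geq 3 \geq 0$ places $n$ in the required range. Next I would invoke Corollary \ref{corFano}, whose second assertion states precisely that for a simplicial Fano toric threefold the class $\beta = \beta_0 + n\eta$ is $0$-regular with respect to $\eta$ whenever $n \geq 3$. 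With both standing hypotheses of Theorem \ref{mainR} now in force, the stated lower bounds on $\operatorname{codim} U_{\eta}(n)$ follow immediately, according to whether $\eta$ is $(-1)$-regular (giving $n+1$) or merely $0$-regular (giving $n$).

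The only point that really requires attention—and thus the closest thing to an obstacle—is that the threshold $n \geq 3$ in the corollary is exactly what Corollary \ref{corFano} demands to guarantee $0$-regularity of $\beta$; for $n \leq 2$ one could lose $0$-regularity and with it the applicability of Theorem \ref{mainR}, so the hypothesis cannot be weakened by this argument. Beyond that, I would simply confirm that the primitivity of $\eta$ and the Gorenstein/Cartier structure (forced by $\beta - \beta_0 = n\eta$ with $\beta$ and $\eta$ Cartier, so that $\beta_0$ is Cartier as well) are inherited unchanged from Theorem \ref{mainR}, so that the Fano assumption is the only additional input and no further conditions need to be imposed.
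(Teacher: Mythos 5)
Your proposal is correct and follows essentially the same route as the paper, which likewise deduces the corollary by citing Corollary \ref{corFano} for the $0$-regularity of $\beta$ and then invoking Theorem \ref{mainR}. Your additional check that $\beta=\beta_0+n\eta$ is ample (needed since the corollary only assumes $\beta$ Cartier, while Theorem \ref{mainR} requires ampleness) is a small point the paper leaves implicit, but it does not change the argument.
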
 

\begin{proof} By Corollary \ref{corFano} $\beta$ is 0-regular, and then the claim follows
from Theorem \ref{mainR}.
\end{proof}

\begin{rem} If $n=2$, we need to assume that $\var$ is Oda.
\end{rem}

\begin{thm}\label{mainO} Let $\var$ be a   simplicial toric Oda threefold $\eta$  an ample {primitive} Cartier class, $\beta_0=-K_\var$, $\beta\in \operatorname{Pic}(\var)$   an ample Cartier class that satisfies $\beta - \beta_0 = n\eta$ for some $n \geq 0$. If $\beta-2\eta $ is nef and $\eta$ is (-1)-regular, then
\begin{equation}\label{inequalityA3}\operatorname{codim} U_{\eta}(n) \geq n+1.\end{equation}
 If $\eta$ is 0-regular, then
 \begin{equation}\label{inequalityA2}\operatorname{codim} U_{\eta}(n)\geq n.\end{equation}
\end{thm}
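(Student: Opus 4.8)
The plan is to deduce the statement directly from the general codimension bound of Theorem \ref{main}, in complete analogy with the proof of Theorem \ref{mainR}, but now supplying the three standing hypotheses of Theorem \ref{main}---well-definedness of $U_\eta(n)$, surjectivity of the multiplication maps, and the vanishings of \eqref{extra}---from the Oda property together with the nefness of $\beta-2\eta$, rather than from $0$-regularity of $\beta$ with respect to $\eta$.

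First I would note that $\beta-\beta_0=n\eta$ is nef, since $\eta$ is ample and $n\geq 0$; equivalently $D+K_\var$ is nef for a divisor $D$ of class $\beta$. Hence Theorem \ref{BGO} applies and guarantees that a very general quasi-smooth surface in $|\beta|$ has the same Picard number as $\var$, which is precisely the hypothesis required in Definition \ref{defvarie}, so that $U_\eta(n)$ is defined and the statement is meaningful. Next I would supply the surjectivity hypothesis: since $\var$ is an Oda variety, the map $S_{\alpha_1}\otimes S_{\alpha_2}\to S_{\alpha_1+\alpha_2}$ is surjective whenever $\alpha_1$ is ample and $\alpha_2$ is nef (Definition \ref{A1}); taking $\alpha_1=\beta$ ample and $\alpha_2=k\eta$, which is nef for every $k\geq 0$ because $\eta$ is ample, yields the surjectivity of $S_\beta\otimes S_{k\eta}\to S_{\beta+k\eta}$ for all $k\geq 0$. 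For the vanishings I would invoke Proposition \ref{extranef}: the classes $\beta$ and $\eta$ are nef and $\beta-2\eta$ is nef by hypothesis, so the three vanishings collected in \eqref{extra} hold.

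With every hypothesis of Theorem \ref{main} now verified, the two codimension bounds follow immediately: the bound $n+1$ when $\eta$ is $(-1)$-regular and the bound $n$ when $\eta$ is $0$-regular. As this is essentially an assembly of previously established results, I do not anticipate a genuine obstacle. The only points deserving care are checking that the nefness of $\beta-2\eta$ is exactly what Proposition \ref{extranef} requires to produce the vanishings of \eqref{extra}, and confirming that the Oda property covers the entire range $k\geq 0$ of multiplication maps used inside the proof of Theorem \ref{main}.
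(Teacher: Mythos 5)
Your proposal is correct and follows the same route as the paper, which proves Theorem \ref{mainO} by combining Theorem \ref{main} with Proposition \ref{extranef} and the surjectivity of the multiplication maps on Oda varieties. Your additional verification that $U_\eta(n)$ is well-defined via Theorem \ref{BGO} makes explicit a point the paper leaves implicit, but the argument is essentially identical.
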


\begin{proof} Follows from Theorem \ref{main} and Proposition \ref{extranef}, as the multiplication map is   surjective on Oda toric varieties.
\end{proof}

For $\var=\mathbb P^3$ and $\eta$    the hyperplane class,
the first  inequality  in the Theorems above reproduces
Green's and Voisin's result \cite{Green2,Green1,Voisin88Precision}.
 \bigskip

\subsection{Examples }\label{examples} We discuss examples of toric threefolds for which the results in 
section \ref{codimensionNL} apply.

\begin{ex}\label{0ref} Let $\var$ a toric simplicial Gorenstein threefold with nef anti-canonical bundle
 and $\eta$ an ample  Cartier class.
 Then any class $\beta=n\eta+ \beta_0$ is $m$-regular with respect to $\eta$ if $ n \geq 3-m$. Here we have used the vanishing \eqref{bottdual}. Note that when $\beta$ is 0-regular, by Proposition \ref{extrareg} the multiplication morphism $S_\beta\otimes S_{k\eta} \to S_{\beta+k\eta}$ is surjective for $k\ge 0$. 
\end{ex}

\begin{ex} \label{bLP3} Let $\widehat{\mathbb P}^3$  be the projective 3-space blown-up along a line  and $E$ the exceptional divisor. Denote by $\varpi\colon \widehat{\mathbb P}^3\to\Pt$ the blow-down morphism. By Ikeda's Theorem \ref{Ikeda}
$\widehat{\mathbb P}^3$ is an Oda simplicial toric variety. 
The nef cone of $\widehat{\mathbb P}^3$ is generated by  the   class $\eta_1 $, the pullback of a hyperplane in $\mathbb P^3$, and the   class  $\eta_2 = \eta_1 - E$.  

The anti-canonical class is
$\beta_0=3\eta_1+\eta_2$. The class $\eta=\eta_1+s \eta_2$ is ample and 0-regular for all $s\ge 1$.
The class $\beta-2\eta$ is nef for $n\ge 2 -\frac1s$. 
A very general surface  $X$  in the linear system  of the class $\beta$, with $n\ge 2 -\frac1s$,  has Picard number  $2$ by Theorem \ref{BGO}.

%
\end{ex}
 
\begin{ex}\label{p1p2} $\mathbb P^1\times \mathbb P^2$  is an Oda variety by Ikeda's Theorem \ref{Ikeda}. The nef cone is generated by the pullback of the hyperplane bundles by the natural projections, namely by the  nef classes  $H_1= \pi_1^*(\cO_{\mathbb P^2}(1))$ and $H_2= \pi_2^*(\cO_{\mathbb P^1}(1))$; the nef cone is also the cone of effective divisors. 

The ample classes $\eta=H_1+sH_2, \ s \geq 1$, are 0-regular. Moreover, $\beta -2\eta$ is nef  if  $\beta=\beta_0 + n \eta$, with $ n \geq2(s-1)/s$. A very general surface $X$ in the linear system $\beta$ has Picard number 2 by Theorem \ref{BGO}.
\end{ex}

\begin{ex} \label{smallres}
Let $\bar{\mathbb P}_\Sigma$ be the cone over a quadric surface in $\Pt$. It is a non-simplicial projective toric threefold \cite[\S 2]{OgataZhao}. We make a small resolution by adding a face which splits the cone having four edges, getting a smooth simplicial toric threefold $\var$. It is easy to see that $\var$ is quasi-Fano. Explicit toric computations show that  the nef cone is generated by $\eta_1$, the pullback of the ample generator of the class group of the quadric surface, and $\eta_2$, the divisor giving the fibration over $\mathbb P^1$ (see Figure 1). Moreover, every ample divisor of the form $\eta=\eta_1+\eta_2$ 
is 0-regular.  The anti-canonical class is $\beta_0=3\eta_1$. By Theorem \ref{Ikeda}, $\var$ is an Oda variety. With $\beta=\beta_0+n\,\eta$, the class $\beta-2\eta$ is nef for $n\ge 2$. Then  a  very general surface  $X$  in the linear system  of the class $\beta$ has Picard number  $2$ by Theorem \ref{BGO},  and  $\operatorname{codim} U_{\eta}(n) \geq n$ by Theorem \ref{mainO}.

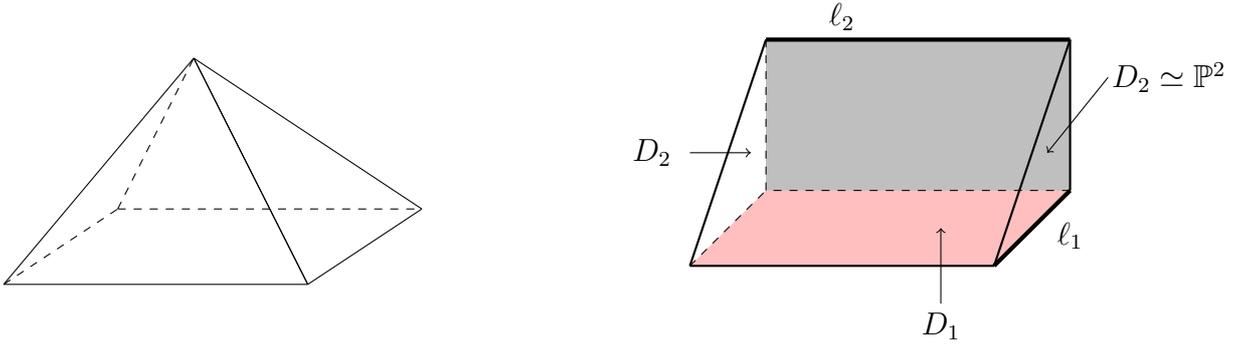
\begin{figure}\label{figure1}
\parbox{6cm}{
\begin{tikzpicture} 
\draw (0,0) -- (4,0);
\draw (0,0) -- (2.5,3);
\draw (4,0) -- (2.5,3);
\draw (4,0) -- (2.5,3);
\draw(4,0) -- (5.5,1);
\draw (2.5,3) -- (5.5,1);
\draw  [dashed] (0,0) -- (1.5,1);
\draw  [dashed] (1.5,1) -- (5.5,1);
\draw  [dashed] (1.5,1) -- (2.5,3);
 \end{tikzpicture} 
 }
 \hskip2cm
 \parbox{6cm}{
\begin{tikzpicture} 
\path [fill=lightgray] (1,1) rectangle (5,3);
\path [fill=pink] (0,0) to (4,0) to (5,1) to (1,1);
\draw [thick] (0,0) -- (4,0);
\draw  [ultra thick]  (4,0) -- (5,1);
\draw  [dashed] (0,0) -- (1,1);
\draw  [dashed] (5,1) -- (1,1);
\draw [thick] (5,1) -- (5,3);
\draw  [ultra thick]  (1,3) -- (5,3);
\node at (2,3.3) {$\ell_2$};
\draw [thick] (0,0) -- (1,3);
\draw  [dashed] (1,1) -- (1,3);
\draw [thick] (4,0) -- (5,3);
\node at (5,.4) {$\ell_1$};
\draw [->] (5.5,2.5) -- (4.7,1.5) ;
\node at (6.3,2.5) {$D_2 \simeq \mathbb P^2$};
\draw [->] (3.3,-.5) -- (3.3,0.5) ;
\node at (3.3,-.8) {$D_1$};
\draw [->] (0,1.5) -- (.8,1.5) ;
\node at (-.5,1.5) {$D_2$};

 \end{tikzpicture} 
 }

 \caption{\small Example \ref{smallres}. The picture on the left is the cone over a quadric in $\Pt$, the picture on the right is its small resolution, obtained by adding the line $\ell_2$. $D_1$, $D_2$ are divisors representing the classes $\eta_1$, $\eta_2$.}
\end{figure}
 \end{ex}

\begin{ex}\label{WP}
Let $\var = {\mathbb P}[1,1,2,2]$   and $\eta_0$ be the effective generator of the class group of $\var$, and let  $\eta$  be the (very) ample generator of the Picard group. Note that  $\eta=2\eta_0$ and $\beta_0=6\eta_0= 3 \eta$.  
The variety $\var = {\mathbb P}[1,1,2,2]$ can be realized as a quotient $\mathbb P^3/\Z_2$, and  is singular along a toric curve $C$ whose class is $\eta_0\cdot\eta_0$;
this is a line of singularities of type $A_1$.

By Theorem \ref{onlyGore}, $\var$ is and Oda variety and $\eta$ is 0-regular, but not (-1)-regular.
Let $\beta= \beta_0 + n \eta= (3+n) \eta$ with $n \ge 0$; then  $\beta - 2 \eta$ is nef. 
A very general surface  $X$  in the linear system  of the class $\beta$ has Picard number  $1$ by Theorem \ref{BGO},  and 
$\operatorname{codim} U_{\eta}(n) \geq n$ by Theorem \ref{mainO}.
\end{ex}

\bigskip
 
\section{Components of small codimension}\label{smallCod}

\subsection{Lines and surfaces containing a line}
In the previous Section we have shown that if $\var$ is an Oda variety and $\eta$ is (-1)-regular,
the inequalities
\begin{equation}\label{sharpA3} n + 1 \leq \operatorname{codim}{U}_\eta (n) \leq h^0(\var, \cO_\var (n \eta))
\end{equation} 
hold, while if $\eta$ is only 0-regular,
\begin{equation}\label{sharpA2} n \leq \operatorname{codim}{U}_\eta (n) \leq h^0(\var, \cO_\Sigma(n \eta)).\end{equation}

 In the case of $\mathbb P^3$  it was conjectured in
\cite{Green2} and proved in \cite{Green1, Green2, VoisinComposantesDeNL}, that the components of $\mathcal M_\beta$ of codimension $n+1$  are those
 whose points correspond to surfaces containing a line. In this section we define a notion of ``line''
 with respect to an ample Cartier class in a toric variety, and study the families of surfaces which contain a line in the Examples of the previous section. We show that  such components have also codimension $n+1$.

 \begin{defin} Let $\var$ be a simplicial toric threefold, equipped with a 0-regular ample Cartier class $\eta$. A line $L$ in $\var$ is a smooth rational curve such that $\eta\cdot L =1$. 
 \end{defin} 
 
A line $L$ is therefore a point in the Hilbert schemes of curves  $Hilb ^{\var}$ in $\var$   with Hilbert polynomial $p(k)=k+1$ (with respect to the polarization $\eta$); this Hilbert scheme is in general reducible.
 
 Let $\beta=\beta_0 +n\eta$, with $n \ge 0$. Assume that $\beta$ is ample, 
and let $\mathcal S_{1,\eta}(n)$ be the family of surfaces in $\var$ of class $\beta$ that contain a line.
For a line $L$ in $\var$, denote $\mathcal I_L$ its ideal sheaf.

 \begin{prop} \label{codimvan} Assume that all lines $L$ in $\var$ verify the condition $H^1(\var,\mathcal I_L(\beta))=0$. Then
 $$ \operatorname{codim} \mathcal S_{1,\eta}(n) = n + 1 + \beta_0\cdot L - \dim _{[L]}Hilb^{\var}.$$  \end{prop}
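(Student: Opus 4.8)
The plan is to express $\mathcal{S}_{1,\eta}(n)$ as the image of an incidence correspondence fibred over a component of the Hilbert scheme of lines, and to recover its dimension by combining a fibre count over that base with the cohomological input provided by the vanishing hypothesis. Fix a general line $[L]$ and let $\mathcal{H}$ be the component of $Hilb^{\var}$ through $[L]$, so that $\dim\mathcal{H}=\dim_{[L]}Hilb^{\var}$. Set
\[
\mathcal{Z}=\{\,([L'],X)\in\mathcal{H}\times|\beta| : L'\subset X\,\},
\]
with the two projections $p_1\colon\mathcal{Z}\to\mathcal{H}$ and $p_2\colon\mathcal{Z}\to|\beta|$; by construction $p_2(\mathcal{Z})$ is the component of $\mathcal{S}_{1,\eta}(n)$ determined by $\mathcal{H}$.

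First I would compute the fibres of $p_1$. For a line $L'$ the fibre $p_1^{-1}([L'])$ is the linear system $\mathbb{P}\big(H^0(\var,\mathcal{I}_{L'}(\beta))\big)$ of surfaces through $L'$. From the ideal sheaf sequence
\[
0\to\mathcal{I}_{L'}(\beta)\to\cO_\var(\beta)\to\cO_{L'}(\beta)\to 0,
\]
together with $L'\cong\mathbb{P}^1$ and $\beta\cdot L'=\beta_0\cdot L'+n(\eta\cdot L')=\beta_0\cdot L+n$, one gets $\cO_{L'}(\beta)\cong\cO_{\mathbb{P}^1}(\beta_0\cdot L+n)$ and hence $h^0(\cO_{L'}(\beta))=n+\beta_0\cdot L+1$ (the degree is nonnegative since $\beta$ is ample). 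The standing hypothesis $H^1(\var,\mathcal{I}_{L'}(\beta))=0$, valid for \emph{every} line, makes the restriction map $H^0(\cO_\var(\beta))\to H^0(\cO_{L'}(\beta))$ surjective, so $h^0(\mathcal{I}_{L'}(\beta))=\dim S_\beta-(n+\beta_0\cdot L+1)$. Because $\beta_0\cdot L'$ is a numerical invariant, constant on the connected family $\mathcal{H}$, the fibre dimension of $p_1$ is constant and
\[
\dim\mathcal{Z}=\dim_{[L]}Hilb^{\var}+\dim S_\beta-n-\beta_0\cdot L-2.
\]

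The formula then follows once $p_2$ is generically finite onto its image: in that case $\dim\mathcal{S}_{1,\eta}(n)=\dim\mathcal{Z}$, and subtracting from $\dim|\beta|=\dim S_\beta-1$ gives precisely $n+1+\beta_0\cdot L-\dim_{[L]}Hilb^{\var}$. Since containing a line is an $\operatorname{Aut}(\var)$-invariant condition, this codimension is unchanged upon passing to the quotient $\mathcal{M}_\beta$. The main obstacle is exactly this generic finiteness of $p_2$: I must rule out that a general surface in the component carries a positive-dimensional family of lines from $\mathcal{H}$, since any such excess $e>0$ would be added to the right-hand side. I expect to control it through the normal bundle sequence $0\to N_{L/X}\to N_{L/\var}\to\cO_L(\beta)\to 0$, whose sections $H^0(L,N_{L/X})$ govern the fibres of $p_2$; this is precisely where the Severi-type vanishing theorem and the explicit study of the Hilbert schemes of lines (including the singular case) carry the genuine geometric content.
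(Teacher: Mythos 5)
Your proof is correct and takes essentially the same route as the paper: the paper likewise fixes a line, uses the ideal-sheaf sequence together with the hypothesis $H^1(\var,\mathcal I_L(\beta))=0$ to get codimension $n+1+\beta_0\cdot L$ for the surfaces through that line, and then subtracts $\dim_{[L]}Hilb^{\var}$ to account for varying the line --- your incidence correspondence $\mathcal Z$ with its two projections is just a formalization of that count. The generic finiteness of $p_2$ that you flag as the remaining obstacle is handled in the paper only by the parenthetical remark that a general surface in $\mathcal S_{1,\eta}(n)$ does not contain two lines, so on this point your write-up is no less complete than the paper's own argument.
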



\begin{proof}
Let $\mathcal S_L\subset H^0(\var,\mathcal O_\var(\beta))$ be the family of surfaces in $\var$ of degree $\beta$ that contain a fixed line $L$.
From the exact sequence
$$ 0 \to \mathcal I_L(\beta) \to \mathcal O_\var(\beta) \to \mathcal O_L(\beta) \to 0, $$
since  $ H^1(\var,\mathcal I_L(\beta))=0$,
we see that the codimension of $\mathcal S_L$ is the dimension of 
$H^0(L,\mathcal O_L(\beta))$. 
This is 
\begin{equation}\dim  H^0(L,\mathcal O_L(\beta)) = 
\dim H^0(\mathbb P^1,\mathcal O_{\mathbb P^1}(\beta_0\cdot L+ n) )= \beta_0\cdot L+ n + 1.
 \end{equation}
 Now we have to vary the line $L$; the effect of this is to subtract from the result 
the   dimension of the Hilbert scheme of curves with Hilbert polynomial $p(k)=k+1$. \end{proof}
 
 (One can note that the general surface in $\mathcal S_{1,\eta}(n) $  does not contain two lines.)
 
%
%

\begin{lemma}\label{dimHilb}Assume that  $L \subset \var$ is  either in the smooth locus of $\var$ or a locally complete intersection and that   $h^1(N_{L/\var})=0$. Then  \begin{enumerate}
\item $\dim _{[L]}Hilb^{\var}= h^0(N_{L/\var})$.

\item If $L$ is in the smooth locus of $\var$, then $\dim _{[L]}Hilb^{\var}= h^0(N_{L/\var})= \beta_0 \cdot L$.
\end{enumerate}
 \end{lemma}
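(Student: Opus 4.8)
The plan is to prove the two parts of Lemma \ref{dimHilb} using the standard deformation theory of the Hilbert scheme, together with a Riemann-Roch computation for the normal bundle of a line.

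For part (i), I would invoke the fundamental fact from deformation theory: at a point $[L]$ of the Hilbert scheme, the Zariski tangent space is $H^0(L, N_{L/\var})$ and the obstruction space is $H^1(L, N_{L/\var})$. The normal sheaf $N_{L/\var}$ is a genuine vector bundle precisely because we assume that $L$ lies in the smooth locus of $\var$ or that $L$ is a local complete intersection in $\var$ (so that the conormal sheaf is locally free and the normal bundle is well-defined rank-$2$). Under the hypothesis $h^1(N_{L/\var}) = 0$, the obstruction space vanishes, so the Hilbert scheme is smooth at $[L]$ and its local dimension equals the dimension of the tangent space, namely $h^0(N_{L/\var})$. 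This gives $\dim_{[L]} Hilb^{\var} = h^0(N_{L/\var})$.

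For part (ii), with $L$ in the smooth locus, I would combine the result of part (i) with Riemann-Roch on $L \cong \mathbb P^1$. Since $\var$ is a threefold, $N_{L/\var}$ is a rank-$2$ bundle on $\mathbb P^1$; by Riemann-Roch, $\chi(N_{L/\var}) = \deg N_{L/\var} + 2(1 - g) = \deg N_{L/\var} + 2$. The degree of the normal bundle is computed from the exact sequence $0 \to T_L \to T_\var|_L \to N_{L/\var} \to 0$, which gives $\deg N_{L/\var} = \deg(T_\var|_L) - \deg T_L = (-K_\var \cdot L) - 2 = \beta_0 \cdot L - 2$, using $\deg T_{\mathbb P^1} = 2$ and $-K_\var = \beta_0$. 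Hence $\chi(N_{L/\var}) = \beta_0 \cdot L$. Since $h^1(N_{L/\var}) = 0$ by hypothesis, $h^0(N_{L/\var}) = \chi(N_{L/\var}) = \beta_0 \cdot L$, which together with part (i) yields the claim.

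The main subtlety, rather than a genuine obstacle, is ensuring that the adjunction/normal-bundle exact sequence is valid on $L$: this requires $L$ to lie in the smooth locus so that $T_\var|_L$ is an honest rank-$3$ bundle and the conormal sequence is exact as a sequence of locally free sheaves on $\mathbb P^1$. This is exactly why part (ii) restricts to the smooth locus, whereas part (i) can tolerate the weaker local-complete-intersection hypothesis. I expect no computational difficulty beyond checking that $-K_\var \cdot L = \beta_0 \cdot L$ is the correct intersection number, which follows directly from the identification of $\beta_0$ with $-K_\var$ fixed in the hypotheses.
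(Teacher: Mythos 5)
Your proposal is correct and takes essentially the same approach as the paper: part (i) is the standard unobstructedness statement for the Hilbert scheme (the paper simply cites Sernesi and Koll\'ar for exactly this), and part (ii) is the same degree computation plus Riemann--Roch, your tangent/normal bundle sequence being just the dual of the conormal sequence $0 \to N_{L/\var}^\ast \to \Omega^1_{\var\vert L} \to \Omega^1_L \to 0$ that the paper uses. Your remark that the local complete intersection hypothesis is what makes the conormal sheaf locally free (so that obstructions indeed lie in $H^1(N_{L/\var})$) correctly identifies the role of that assumption.
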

 \begin{proof}  
 (i) The statement follows from  \cite[Thm.~4.3.5]{Sernesi} and \cite[Ch.~1, Thm. ~2.8, Lemma 2.12, Prop.~2.14]{KRat}.
 
 (ii) Note that from the exact sequence 
\begin{equation}\label{seqdifferentials} 0 \to N_{L/ \var}^\ast \to \Omega^1_{\var\vert L} \to \Omega^1_L \to 0 \end{equation}
one has $\deg N_{L/ \var} = \beta_0\cdot L  - 2$, so that from Riemann-Roch one obtains
 $h^0(N_{L/ \var})= \beta_0\cdot L$.
 \end{proof}

 \begin{corol}\label{total}  If $ dim _{[L]}Hilb^{\var}= \beta_0 \cdot L$, then  $$ \operatorname{codim} \mathcal S_{1,\eta}(n) = n + 1.$$ \end{corol}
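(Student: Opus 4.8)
The plan is to derive the statement as an immediate substitution into the codimension formula already established in Proposition \ref{codimvan}. That proposition yields
$$\operatorname{codim} \mathcal S_{1,\eta}(n) = n + 1 + \beta_0\cdot L - \dim_{[L]}Hilb^{\var},$$
so the entire content of the corollary is that the hypothesis $\dim_{[L]}Hilb^{\var} = \beta_0 \cdot L$ forces the last two terms to cancel.

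First I would invoke Proposition \ref{codimvan} to obtain the displayed formula, taking care that its standing hypothesis $H^1(\var, \mathcal I_L(\beta)) = 0$ is available in the situation at hand. Then I would substitute $\dim_{[L]}Hilb^{\var} = \beta_0 \cdot L$, whereupon $\beta_0 \cdot L - \dim_{[L]}Hilb^{\var} = 0$ and the codimension collapses to $n+1$. No further computation is needed, since the formula is linear in the quantities involved.

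Because the arithmetic is trivial, the real substance lies upstream. It is Lemma \ref{dimHilb}(ii) that supplies the geometric input, guaranteeing $\dim_{[L]}Hilb^{\var} = h^0(N_{L/\var}) = \beta_0 \cdot L$ precisely when $L$ lies in the smooth locus of $\var$ and $h^1(N_{L/\var}) = 0$. Thus the only point worth flagging is that, in any given application, one must verify that these hypotheses actually hold: smoothness of $L$ (or its being a local complete intersection), the vanishing $h^1(N_{L/\var})=0$ feeding Lemma \ref{dimHilb}, and the vanishing $H^1(\var, \mathcal I_L(\beta))=0$ feeding Proposition \ref{codimvan}. Once these are in force the corollary is purely formal, and the resulting codimension $n+1$ matches the classical lower bound from the Noether-Lefschetz estimate, confirming sharpness for the components parametrizing surfaces through a line.
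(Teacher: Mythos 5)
Your proposal is correct and takes essentially the same route as the paper, which states the corollary without separate proof precisely because it is the immediate substitution of $\dim_{[L]}Hilb^{\var}=\beta_0\cdot L$ into the formula $\operatorname{codim}\mathcal S_{1,\eta}(n)=n+1+\beta_0\cdot L-\dim_{[L]}Hilb^{\var}$ of Proposition \ref{codimvan}. Your flagging of the standing hypotheses — $H^1(\var,\mathcal I_L(\beta))=0$ for the proposition, and the conditions of Lemma \ref{dimHilb} to realize $\dim_{[L]}Hilb^{\var}=\beta_0\cdot L$ — is exactly how the paper proceeds in its examples, where Theorem \ref{vanishing} supplies the ideal-sheaf vanishing.
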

 
In our examples, the required vanishing   $H^1(\var,\mathcal I_L(\beta))=0$ in  Proposition \ref{codimvan}  will be provided  
by  the following theorem, which generalizes to the case of simplicial toric threefolds a result by Severi \cite{Severi1903} and 
 Bertram, Ein and Lazarsfeld \cite{BertramEinLazarsfeld}.
 
 \begin{thm} Let $\var$ be a simplicial projective toric threefold, and let $L$ be a toric irreducible curve in $\var$,
 which is not contained in the singular locus of $\var$, and is the intersection of two effective divisors $D_1$ and $D_2$, with
 $D_1$ Cartier. Let  $\mu\colon X \to \var$ be the blowup of $\var$ along $L$, denote by $Y$ the strict transform of $D_1$,
 and let $D$ be a divisor in $\var$ such that 
 \begin{enumerate} \item $D-D_1$ is nef;
 \item the line bundle
\begin{equation}\label{thistobenef} \mu^\ast (\cO_X(D-D_1))_{\vert Y} + N_{Y/X}\end{equation}
 is nef. \end{enumerate} Then 
 $H^i(\var,\mathcal I_L(D))=0$ for $i>0$.
 \label{vanishing}
 \end{thm}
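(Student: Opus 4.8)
The plan is to transfer the computation to the blowup $\mu\colon X\to\var$ and to exploit the fact that $L$ is cut out by $D_1$ and $D_2$. Since $L$ is a toric irreducible curve lying in the smooth locus of $\var$, it is a smooth (in particular locally complete intersection) curve, and, locally around $L$, $\mu$ is the ordinary blowup of a smooth threefold along a smooth curve, while it is an isomorphism away from $L$. Consequently the exceptional divisor $E$ satisfies $\mu_\ast\cO_X(-E)=\mathcal I_L$ and $R^i\mu_\ast\cO_X(-E)=0$ for $i>0$, and moreover $\mu_\ast\cO_X=\cO_\var$ with $R^i\mu_\ast\cO_X=0$ for $i>0$ (toric varieties have rational singularities). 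Feeding these into the Leray spectral sequence together with the projection formula yields
\[
H^i(\var,\mathcal I_L(D))\simeq H^i\bigl(X,\cO_X(\mu^\ast D-E)\bigr)\qquad\text{for all }i,
\]
so it suffices to prove that the right-hand side vanishes for $i>0$.

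The key identity is that, as $D_1$ is a prime (reduced, irreducible) divisor through $L$, its multiplicity along $L$ is $1$, whence its total transform is $\mu^\ast D_1=Y+E$, where $Y$ is the strict transform. Therefore
\[
\mu^\ast D-E=\mu^\ast(D-D_1)+\mu^\ast D_1-E=\mu^\ast(D-D_1)+Y,
\]
and I am reduced to showing $H^i\bigl(X,\cO_X(\mu^\ast(D-D_1)+Y)\bigr)=0$ for $i>0$. I would then use the structure sequence of the divisor $Y$ twisted by $\mu^\ast(D-D_1)+Y$,
\[
0\to \cO_X(\mu^\ast(D-D_1))\to \cO_X(\mu^\ast(D-D_1)+Y)\to \cO_Y(\mu^\ast(D-D_1)+Y)\to 0,
\]
and chase the associated long exact sequence.

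For the first term, the projection formula together with $R\mu_\ast\cO_X=\cO_\var$ gives $H^i(X,\mu^\ast\cO_\var(D-D_1))\simeq H^i(\var,\cO_\var(D-D_1))$, which vanishes for $i>0$ by Demazure's vanishing theorem since $D-D_1$ is nef, i.e.\ hypothesis (i). For the last term, observe that $\cO_Y(\mu^\ast(D-D_1)+Y)$ equals $\mu^\ast(\cO_\var(D-D_1))_{\vert Y}\otimes N_{Y/X}$, i.e.\ the line bundle of \eqref{thistobenef}, which is nef by hypothesis (ii). Since $Y$ is the strict transform of the toric divisor $D_1$ under the toric morphism $\mu$, it is a complete toric surface and the restricted bundle is a nef toric line bundle on it, so Demazure's vanishing on $Y$ gives $H^i(Y,\cO_Y(\mu^\ast(D-D_1)+Y))=0$ for $i>0$. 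The long exact sequence then forces $H^i(X,\cO_X(\mu^\ast(D-D_1)+Y))=0$ for $i>0$, which completes the argument.

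The main obstacle is not the cohomological bookkeeping but the geometric setup that makes it run: one must justify that $\mu$ behaves like a smooth blowup near $L$ — this is precisely where the hypotheses that $L$ avoids $\mathrm{Sing}(\var)$ and is cut out by $D_1$ and $D_2$ are used, yielding $\mu_\ast\cO_X(-E)=\mathcal I_L$ and $R^i\mu_\ast\cO_X(-E)=0$ — that $D_1$ meets $L$ with multiplicity one so that $\mu^\ast D_1=Y+E$, and that $Y$ is a genuinely complete toric surface so that Demazure's vanishing applies. I would stress that both invoked vanishings are of Demazure type (valid for merely \emph{nef} bundles, not only ample ones), so no positivity beyond the nefness assumptions (i) and (ii) is needed; this is exactly what keeps the hypotheses sharp and avoids having to pass through Kawamata–Viehweg-type estimates on the singular space $X$.
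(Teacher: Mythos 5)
Your reduction to showing $H^i(X,\cO_X(\mu^\ast(D-D_1)+Y))=0$ agrees with the paper's, but the way you finish has a genuine gap: you restrict to $Y$ and invoke ``Demazure's vanishing on $Y$,'' claiming $Y$ is a complete toric surface. Nothing in the hypotheses makes $Y$ toric: only the curve $L$ is assumed toric, while $D_1$ is merely an effective Cartier divisor through $L$ (in the paper's applications it is a \emph{general} member of a linear system, hence not torus-invariant), so its strict transform $Y$ need not be toric. On an arbitrary complete surface, nefness of a line bundle gives no higher-cohomology vanishing at all --- $\cO_Y$ itself is nef, yet $H^1(Y,\cO_Y)$ and $H^2(Y,\cO_Y)$ are generally nonzero --- so the third term of your long exact sequence does not die and the argument collapses. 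This is exactly the point where the paper's proof takes a different route: hypotheses (i) and (ii) are used not to produce two separate vanishings, but to prove that the single divisor $G=\mu^\ast D-F=\mu^\ast(D-D_1)+Y$ is nef \emph{on $X$}, by testing $G$ against curves $\Gamma\not\subset Y$ (where (i) and the effectivity of $Y$ suffice) and curves $\Gamma\subset Y$ (where (ii) is precisely the needed inequality, since $\cO_X(G)_{\vert Y}$ is the bundle \eqref{thistobenef}); Demazure's vanishing is then applied once, on $X$, which \emph{is} a toric variety because the center $L$ of the blowup is a toric curve.

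A second, lesser gap: you read ``$L$ is not contained in the singular locus'' as ``$L$ lies in the smooth locus.'' The hypothesis only excludes $L\subset\operatorname{Sing}(\var)$; the curve may well meet the singular locus in finitely many points, and in the paper's own application to $\mathbb P[1,1,2,2]$ the line crosses the curve of $A_1$-singularities. So your justification that $\mu$ is, near $L$, an ordinary blowup of a smooth threefold along a smooth curve --- from which you deduce $\mu_\ast\cO_X(-E)=\mathcal I_L$ and $R^i\mu_\ast\cO_X(-E)=0$ --- does not apply as stated. Those identities are true, but they require the paper's argument: one first shows that $\mu_\ast\cO_X(-F)$ is the ideal sheaf of a closed subscheme of $L$ and that this subscheme is all of $L$, using that $F\to L$ is flat with integral rational fibres (cohomology and base change), and then obtains $R^i\mu_\ast\cO_X(-F)=0$ from the pushforward of the structure sequence of $F$ combined with $R^i\mu_\ast\cO_X=0$ for the toric morphism $\mu$.
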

 
 \begin{proof}
One has $\mu_\ast(\mathcal O_X(-F))\simeq \mathcal I_L$, where
 $F\subset X$ is the exceptional divisor. We see that one has indeed 
 $\mu_\ast\cO_X\simeq \cO_\var$, and 
 $R^i\mu_\ast\cO_X=0$ for $i>0$ as $\mu$ is  a toric morphism \cite[Thm.~9.2.5]{CoxLSh}.
By applying the functor $\mu_\ast$ to the fundamental exact sequence of the divisor $F$, we obtain that   $\mu_\ast(\mathcal O_X(-F))$ is a rank 1 subsheaf of $\mathcal O_\var$,
 i.e., it is the ideal sheaf of a closed subscheme $L'$ of $L$. Actually, $L'=L$; indeed, if $L'$ were properly contained in $L$, it would consist of a finite number of points, and its structure sheaf would be a torsion subsheaf of $\mu_\ast{\cO_F}_{\vert L}$. However, the latter
 is locally free, as follows from the fact that all fibres of $F \to L$ are integral rational curves, so that $F\to L$ is a flat morphism, and one can apply cohomology and base change. So $L'=L$, and  $\mu_\ast(\mathcal O_X(-F))\simeq \mathcal I_L$.
 
Then we have the exact sequence
 $$ 0 \to \mathcal I_L \to \cO_\var \to \mu_\ast\cO_F \to R^1\mu_\ast (\mathcal O_X(-F))\to  R^1\mu_\ast\cO_X =0.$$
 Since the first three terms form  an exact sequence,  we obtain   $R^1 \mu_\ast(\mathcal O_X(-F) )= 0$, and similarly $R^i \mu_\ast(\mathcal O_X(-F) )= 0$  for $i>1$.
 Now  the projection formula and the Leray spectral sequence give
 $$H^i(X,\mu^\ast\mathcal O_\var(D)\otimes \mathcal O_X(-F)) \simeq H^i(\var,\mathcal I_L(D)).$$
 
If the divisor $G = \mu^\ast(D)-F$ is nef the conclusion of the Theorem will follow from 
  Demazure's vanishing theorem  (\cite[Thm.~9.2.3]{CoxLSh}). Let $\Gamma$ be an irreducible curve in $X$.
  If $\Gamma$ is not contained in $Y$, then the $ [G] \cdot  \Gamma\ge 0$ as $D-D_1$ is nef. If $\Gamma$ is in $Y$,
  then $[G] \cdot  \Gamma\ge  0$ as the line bundle \eqref{thistobenef}  is nef.
\end{proof}

We revisit now the Examples in Section \ref{codimsec} and compute
  the codimension of the loci of surfaces containing a line.
 In all cases, the vanishing in Theorem \ref{vanishing} holds.
Moreover, we can apply it also to lines $L$ that are not toric, as a suitable automorphism can always take them to toric lines.
 
  \subsection{Examples: $\var$ smooth}   
 \begin{ex} We consider   the Example \ref{bLP3} again. 
The Mori cone of $\widehat{\mathbb P}^3$ is generated by
the  curves $\ell_1$ and $\ell_2$, where $\ell_1$ determines   the extremal contraction
$\widehat{\mathbb P}^3\to\Pt$, and $\ell_2$ determines the natural morphism $\widehat{\mathbb P}^3\to\mathbb P^1$. One has $\ell_i\cdot\eta_j=0$ if $i=j$, and $\ell_i\cdot\eta_j=1$ if $i\ne j$. The class $\eta=\eta_1+\eta_2$ is very ample, and the curves $\ell_1$ and $\ell_2$ are lines with respect to it.
Note that the Hilbert scheme of curves in $\widehat{\mathbb P}^3$ with Hilbert polynomial
$p(k)=k+1$ has two connected components, corresponding to the homology classes
of the lines $\ell_1$ and $\ell_2$. 

  The general line numerically equivalent to $\ell_2$ is cut by divisors in the classes $\eta_1$ and $\eta_2$. It is easy to see that $h^1(N_{\ell_2/\var})=0$.

  We consider the family $\mathcal S_{1,\eta}(n)_{\ell_2}$ of surfaces of class $\beta=\beta_0+n\,\eta$ (with $n\ge 0$) in 
$\widehat{\mathbb P}^3$ that contain a line homologically equivalent to $\ell_2$. By Proposition \ref{codimvan}, Lemma \ref{dimHilb} and Corollary \ref{total}
$$\operatorname{codim} \mathcal S_{1,\eta}(n)_{\ell_2}= n+1.$$
\end{ex}
 
%

\begin{ex} ($\mathbb P^1 \times \mathbb P^2$, Example \ref{p1p2})
The dual cone of effective curves (the Mori cone) is generated by the rational curves 
$\ell_1=H_1 \cdot H_2$ and $\ell_2= H_1 \cdot H_1$. The curves with classes $\ell_1$ and $\ell_2$  
are lines with respect to the ample class $\eta=H_1+H_2$.  It is easy to see that $h^1(N_{\ell_1/\var})=h^1(N_{\ell_2/\var})=0$.
The families of surfaces $\mathcal S_{1,\eta}(n)_{\ell_1}$ and $\mathcal S_{1,\eta}(n)_{\ell_2}$
containing a line numerically equivalent to $\ell_1$ and $\ell_2$, respectively, have both codimension $n+1$  by Proposition \ref{codimvan}, Lemma \ref{dimHilb} and Corollary \ref{total}. 

One can note that if we take $\eta=H_1+sH_2$, then $\mathcal S_{1,\eta}(n)_{\ell_1}$ has codimension $ns+1$, while $\mathcal S_{1,\eta}(n)_{\ell_2}$ still has   codimension $n+1$ (note indeed that $\ell_1$ is not a line).
\end{ex}

\begin{ex} (The small resolution of the cone over a quadric, see Example \ref{smallres}.)
The Mori cone is generated by the lines $\ell_1$ and $\ell_2$ (see Figure 1). $\ell_2$ is the exceptional curve of the small resolution, while $\ell_1$ determines the natural morphism to $\mathbb P^1$. Both are lines with respect to the ample class $\eta=\eta_1+\eta_2$. The general line numerically equivalent to $\ell_1$ is cut by divisors in the classes $\eta$ and $\eta_2$. It is easy to see that $h^1(N_{\ell_1/\var})=0$.  If $\beta=\beta_0+n\,\eta$ with $n\ge 2$, we consider the family $\mathcal S_{1,\eta}(n)_{\ell_1}$ of surfaces in $\var$ of class $\beta$ which contain a line homologically equivalent to $\ell_1$.  
Proposition \ref{codimvan}, Lemma \ref{dimHilb} and Corollary \ref{total} yield $$\operatorname{codim} \mathcal S_{1,\eta}(n)_{\ell_1}= n+1.$$
\end{ex}
 
\subsection{$\pmb{\var =  \mathbb P [1,1,2,2]}$}\  \\[3pt]
We revisit also Example \ref{WP}. The numerical class of a line $L$ is given by $\eta\cdot\eta_0$.  We set as usual $\beta=\beta_0+n\eta$, with $n\ge 0$.  We prove that $\dim Hilb^{\var} = \beta_0\cdot L=3$, so that   the family $\mathcal S_1$ of surfaces in $\var$ that contain a line has codimension $n+1$.
Note that since $L$ is a complete intersection,   Lemma \ref{dimHilb} applies if $h^1(N_{L/\var})=0$.
 In fact we have:
 
\begin{lemma}
$\deg N_{L/\var} =1$ and $h^1(N_{L/\var})=0$. 
\end{lemma}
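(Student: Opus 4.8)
The plan is to compute the normal bundle $N_{L/\var}$ explicitly, using the fact that $L$ is a complete intersection of two divisors in the class group. Since $\var = \mathbb{P}[1,1,2,2]$ has class group generated by $\eta_0$ with $\eta = 2\eta_0$ and $\beta_0 = 6\eta_0$, and since the numerical class of $L$ is $\eta\cdot\eta_0$, I would first identify two effective divisors cutting out $L$ and read off the conormal exact sequence. The degree computation proceeds via the conormal sequence \eqref{seqdifferentials}: for a curve cut out as a complete intersection of divisors $D_1, D_2$, the conormal bundle is $N^\ast_{L/\var}\simeq (\cO_\var(-D_1)\oplus\cO_\var(-D_2))\vert_L$, so $\deg N_{L/\var} = (D_1+D_2)\cdot L$. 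Using the adjunction relation $\deg N_{L/\var}=\beta_0\cdot L - 2$ established in the proof of Lemma \ref{dimHilb}(ii), and the stated value $\beta_0\cdot L = 3$, one immediately expects $\deg N_{L/\var}=1$.

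First I would verify $\beta_0\cdot L = 3$ directly from the intersection theory on $\var$, using $\beta_0=6\eta_0$, $\eta=2\eta_0$, and the numerical class $L=\eta\cdot\eta_0 = 2\eta_0^2$; together with the self-intersection data of $\eta_0$ on $\var$ this pins down the number. This gives $\deg N_{L/\var}=1$ from the adjunction relation above, confirming the first assertion. The subtlety here is that $\var$ is singular along the curve $C$ of class $\eta_0\cdot\eta_0$ (an $A_1$-line of singularities), so one must check that $L$ meets this singular locus in a controlled way or lies in the smooth locus; since $L$ is a complete intersection (as already observed before the Lemma), the conormal sequence and the notion of normal bundle still make sense as a locally free sheaf of rank $2$.

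For the vanishing $h^1(N_{L/\var})=0$, my approach is to show that $N_{L/\var}$, as a rank-$2$ bundle of degree $1$ on $L\simeq\mathbb{P}^1$, splits as $\cO_{\mathbb{P}^1}(a)\oplus\cO_{\mathbb{P}^1}(b)$ with $a+b=1$ and both $a,b\ge -1$, which forces $h^1=0$ since any line bundle of degree $\ge -1$ on $\mathbb{P}^1$ has vanishing $H^1$. The cleanest route is to identify the two summands from the complete-intersection description: each factor is $\cO_\var(D_i)\vert_L$ with $D_i\cdot L\ge 0$ (the $D_i$ being effective, in fact nef), so each summand has nonnegative degree and $h^1$ of each vanishes.

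The main obstacle I anticipate is the singularity of $\var$ along $C$: one must ensure that the complete-intersection description of $L$ and the resulting splitting of $N_{L/\var}$ are valid despite $\var$ not being smooth. The safe way is to track exactly which Cartier-versus-Weil issues arise — whether the divisors $D_i$ cutting out $L$ are Cartier near $L$ — and to use the quasi-smoothness and the quotient presentation $\var = \Pt/\Z_2$ to reduce the computation to $\Pt$, where lines and their normal bundles are completely understood, and then descend. Once the splitting type is known, the cohomology vanishing is routine.
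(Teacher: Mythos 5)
Your plan has a genuine gap, and it sits exactly at the point you flagged as the ``main obstacle'' but then waved away. Every line $L$ in $\var=\mathbb P[1,1,2,2]$ meets the singular curve $C$: since $\eta_0^3=1/4$, the numerical class $L=\eta\cdot\eta_0=2\eta_0^2$ gives $\eta_0\cdot L=2\eta_0^3=1/2$, which is not an integer, and this is impossible for a curve avoiding the locus where $\eta_0$ fails to be Cartier. At the point $P=L\cap C$ the Weil divisor of class $\eta_0$ cutting out $L$ is not Cartier, so the complete-intersection formula $N^\ast_{L/\var}\simeq(\cO_\var(-D_1)\oplus\cO_\var(-D_2))\vert_L$, on which both of your computations rest, fails; indeed $\cO_\var(\eta_0)$ is not even invertible at $P$, so its restriction to $L$ is not a line bundle. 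The failure is visible numerically: your formula would give $\deg N_{L/\var}=(\eta+\eta_0)\cdot L=1+1/2=3/2$, not an integer and in particular not $1$. What actually happens --- and this is the content of the paper's proof --- is that the conormal sheaf ${\mathcal I_L}/{\mathcal I_L}^2$ is \emph{not} locally free: it has a torsion summand $K$ concentrated at $P$, with ${\mathcal I_L}/{\mathcal I_L}^2=N^\ast_{L/\var}\oplus K$, and the degree is obtained by dualizing the four-term sequence \eqref{normal} (which kills $K$) to get $0\to T_L\to(\Omega^1_{\var\vert L})^\ast\to N_{L/\var}\to 0$, whence $\deg N_{L/\var}=3-2=1$. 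Your fallback --- quoting the adjunction relation $\deg N_{L/\var}=\beta_0\cdot L-2$ from Lemma \ref{dimHilb}(ii) --- is not available either: that statement is proved under the hypothesis that $L$ lies in the smooth locus of $\var$, which, as just noted, never holds here. Showing that the relation survives the passage through the singular point is precisely the work this lemma has to do, so invoking it is circular.

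The same false splitting undermines your argument for $h^1(N_{L/\var})=0$: without it, knowing only that $N_{L/\var}$ is a rank-$2$ bundle of degree $1$ on $\mathbb P^1$ gives a splitting $\cO(a)\oplus\cO(1-a)$ with $a$ a priori unbounded, so $h^1$ need not vanish. The paper's route is quite different and is where most of the effort goes: it first proves $H^0(L,\Omega^1_{\var\vert L})=0$, by comparing K\"ahler and Zariski differentials (torsion-freeness of $\Omega^1_\var$ yields an injection $\Omega^1_{\var\vert L}\hookrightarrow\widehat\Omega^1_{\var\vert L}$) and descending to $\mathbb P^3$ through the quotient presentation; it then twists the conormal sequence to $0\to N^\ast_{L/\var}(-2)\to\Omega^1_{\var\vert L}(-2)\to\cO_L(-4)\to 0$ and concludes by Serre duality, $h^1(N_{L/\var})=h^0(N^\ast_{L/\var}(-2))=0$. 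You mention the descent to $\mathbb P^3$ as ``the safe way'' to handle the Cartier-versus-Weil issues, but that step is not a safety check one can defer: carried out correctly, it \emph{is} the proof, and nothing in your outline performs it.
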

\begin{proof} We consider the exact sequence
\begin{equation}\label{normal} 0 \to K \to {\mathcal I_L}/{\mathcal I_L}^2 \xrightarrow{d\ } \Omega^1_{\var\vert L} \to \Omega^1_L \to 0  \end{equation}
where $\Omega^1_\var$ is the sheaf of K\"ahler differentials on $\var$.
$K$ is a torsion sheaf concentrated at the intersection point $P$ of $L$ with the curve $C$ of singularities of $\var$. 
Moreover, 
$K$ is the torsion of ${\mathcal I_L}/{\mathcal I_L}^2$. Since $L$ is a smooth curve, ${\mathcal I_L}/{\mathcal I_L}^2$ splits as 
$$ {\mathcal I_L}/{\mathcal I_L}^2 = N_{L/\var}^\ast\oplus K,$$
indeed (see e.g.~\cite{Sernesi}), by dualizing \eqref{normal} we obtain 
$$ 0 \to T_L \to ( \Omega^1_{\var\vert L})^\ast \to N_{L/\var}  \to 0\,. $$
Hence we have
$$\deg N_{L/\var} = 3- 2 = 1.$$
\end{proof}
\begin{lemma} $H^0(L,\Omega^1_{\var\vert L}) = 0$.
\end{lemma}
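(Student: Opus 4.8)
The plan is to show the vanishing $H^0(L, \Omega^1_{\var|L}) = 0$ by analyzing the sheaf $\Omega^1_{\var|L}$ via the exact sequence \eqref{seqdifferentials}, or rather its Kähler-differential analogue, restricted to the line $L \cong \mathbb P^1$. The key point is that $L$ meets the singular curve $C$ in a single point $P$, so away from $P$ the variety $\var$ is smooth and $\Omega^1_{\var|L}$ behaves like the restriction of the cotangent bundle of a smooth threefold. First I would use the sequence
\begin{equation*}
0 \to N_{L/\var}^\ast \to \Omega^1_{\var|L} \to \Omega^1_L \to 0
\end{equation*}
valid in the smooth locus (and which, after accounting for the torsion contribution $K$ at $P$ established in the previous lemma, controls $\Omega^1_{\var|L}$ globally). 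Since $\Omega^1_L \cong \cO_{\mathbb P^1}(-2)$ has no global sections, it suffices to show $H^0(L, N_{L/\var}^\ast) = 0$ together with the injectivity of the connecting-type map, and to check that the torsion sheaf $K$ does not produce spurious sections.

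Concretely, I would compute $H^0(L, N_{L/\var}^\ast)$ using the decomposition ${\mathcal I_L}/{\mathcal I_L}^2 = N_{L/\var}^\ast \oplus K$ from the preceding lemma. On $L \cong \mathbb P^1$, the reflexive (locally free) part $N_{L/\var}^\ast$ is a rank-$2$ bundle of total degree $-\deg N_{L/\var} = -1$, so it splits as $\cO_{\mathbb P^1}(a) \oplus \cO_{\mathbb P^1}(b)$ with $a+b = -1$. The task is then to pin down the splitting type enough to conclude that neither summand has nonnegative degree, i.e.\ that $\max(a,b) < 0$, which for $a+b=-1$ forces $\{a,b\}=\{0,-1\}$ as the only borderline case to rule out. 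Ruling out a trivial summand $\cO_{\mathbb P^1}$ in $N_{L/\var}^\ast$ is where the geometry of the $A_1$-singularity along $C$ enters: a nonzero section of $N_{L/\var}^\ast$ would correspond to a nowhere-vanishing subbundle, hence a direction in which $L$ does not deform, which I expect to contradict the ampleness of $\eta$ and the fact that lines sweep out $\var$ positively. Alternatively, I would argue directly from $\Omega^1_{\var|L}$: restricting the Euler-type sequence for the weighted projective space $\mathbb P[1,1,2,2]$ (realized as $\mathbb P^3/\Z_2$) to the line and taking $\Z_2$-invariants expresses $\Omega^1_{\var|L}$ in terms of line bundles on $\mathbb P^1$ of strictly negative degree.

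The main obstacle I anticipate is the behavior at the singular point $P$, where $\var$ fails to be smooth and the ordinary cotangent sequence breaks down, producing the torsion sheaf $K$. I would handle this by working on the $\Z_2$-cover: pull $L$ back to its preimage $\tilde L$ in $\mathbb P^3$ (a line or pair of lines meeting over $P$), where the cotangent bundle is the honest $\Omega^1_{\mathbb P^3}|_{\tilde L}$ with well-understood splitting type $\cO(-2)\oplus\cO(-1)^{\oplus 2}$, and then descend by taking invariants under the group action. Since sections of $\Omega^1_{\var|L}$ correspond to invariant sections upstairs, and every summand of $\Omega^1_{\mathbb P^3}|_{\tilde L}$ has negative degree, no invariant global section can exist. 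This reduces the singular computation to the smooth, well-known one on $\mathbb P^3$ and bypasses the torsion subtlety entirely, giving $H^0(L, \Omega^1_{\var|L}) = 0$.
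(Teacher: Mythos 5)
Your second argument (passing to the cover of $\var$ by $\Pt$ and taking invariants) is, in outline, the paper's own strategy, but the step you wave through --- ``sections of $\Omega^1_{\var|L}$ correspond to invariant sections upstairs,'' which you claim ``bypasses the torsion subtlety entirely'' --- is precisely where all the work lies, and it is not automatic. What the invariant pushforward computes is the sheaf of \emph{Zariski} differentials: by the results of Dolgachev and Knighten cited in the paper, $\pi_\ast^{\Z_2}\,\Omega^1_{\Pt}\simeq \widehat\Omega^1_{\var}=j_\ast\Omega^1_{\var,0}$, and \emph{not} the sheaf of K\"ahler differentials $\Omega^1_\var$ appearing in the statement. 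The two sheaves differ exactly along the singular curve $C$, and $C$ meets $L$ at the point $P$, so the discrepancy survives restriction to $L$. Your covering argument therefore proves only $H^0(L,\widehat\Omega^1_{\var\vert L})=0$, which is the intermediate vanishing \eqref{nosec} of the paper. The entire second half of the paper's proof is devoted to transferring this to $\Omega^1_{\var|L}$: it uses the comparison sequence $0\to J\to\Omega^1_\var\to\widehat\Omega^1_\var\to Q\to 0$, the vanishing $J=0$ (torsion-freeness of $\Omega^1_\var$, via Kunz, Knighten, Greb--Rollenske), the fact that $Q$ is supported on $C$, and then, after tensoring with $\cO_L$, an argument that the image of $\operatorname{Tor}_1(Q,\cO_L)\to \Omega^1_{\var\vert L}$ is zero (here the paper invokes the local freeness of $\Omega^1_{\var\vert L}$ established in the previous lemma), so that $\Omega^1_{\var\vert L}$ injects into $\widehat\Omega^1_{\var\vert L}$. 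None of this appears in your proposal, and it cannot be skipped: torsion of differentials at $P$ is exactly the phenomenon that the quotient description by itself does not see.

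Your first route is worse than incomplete: it is arithmetically impossible. By the lemma immediately preceding this one, $\deg N_{L/\var}=1$, so $N^\ast_{L/\var}$ is a rank-two bundle of degree $-1$ on $L\cong\Pu$. Writing $N^\ast_{L/\var}\cong\cO_{\Pu}(a)\oplus\cO_{\Pu}(b)$ with $a+b=-1$, one of $a,b$ is nonnegative for \emph{every} splitting type (two integers $\le -1$ cannot sum to $-1$); thus $H^0(N^\ast_{L/\var})\neq 0$ unconditionally, $\{a,b\}=\{0,-1\}$ is not ``the only borderline case to rule out,'' and no geometric input about the $A_1$-locus or the ampleness of $\eta$ can change this. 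This incompatibility is also a warning that the lemma cannot be read off naively from the conormal sequence: the failure of $\mathcal I_L/\mathcal I_L^2$ and $\Omega^1_{\var\vert L}$ to behave like their smooth-case counterparts at $P$ is unavoidable, and it is exactly why the paper routes the argument through $\widehat\Omega^1_\var$ rather than through the splitting type of $N^\ast_{L/\var}$. Finally, your proposed mechanism for excluding a trivial summand --- that a section of $N^\ast_{L/\var}$ gives ``a direction in which $L$ does not deform,'' contradicting ampleness --- is a heuristic, not an argument: a section of the conormal sheaf imposes no such constraint on deformations, and ampleness of $\eta$ plays no role at this point of the paper.
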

\begin{proof} Let $\pi\colon \mathbb P^3 \to \mathbb P^3/\Z_2 = \var$. From \cite{Dolg-weighted, Knighten} we have 
$$ \pi_\ast^{\mathbb Z_2} \,\Omega^1_{\mathbb P^3} \simeq \widehat\Omega^1_{\var}$$
where $\widehat\Omega^1_{\var}$ is the sheaf of Zariski differentials (see section \ref{Zariski}). 
Let $\tilde L = \pi^{-1}(L)$. 
Note that $H^0(\tilde L, \Omega^1_{\mathbb P^3\vert \tilde L} ) =0$, as follows from the Euler sequence of $\mathbb P^3$ restricted to $\tilde L$.
Also, we have
$$ H^0(L, \widehat\Omega^1_{\var\vert L}) = H^0(L, \pi_\ast^{\mathbb Z_2} \Omega^1_{\mathbb P^3\vert \tilde L} )
=H^0(\tilde L, \Omega^1_{\mathbb P^3})^{\Z_2}. $$

Since  $H^0(\tilde L, \Omega^1_{\mathbb P^3})^{\Z_2}=0$, we have
\begin{equation}  H^0(L, \widehat\Omega^1_{\var\vert L}) = 0 .\label{nosec} \end{equation}

Let $i\colon U \to \var$ be the embedding of the smooth locus of $\var$. By definition,
$\widehat\Omega^1_\var = i_\ast i^\ast \Omega^1_\var$, so that we have an exact sequence
$$ 0 \to J \to  \Omega^1_\var \to \widehat\Omega^1_\var \to Q \to 0.$$
However, since $J$ is supported on the singular locus of $\var$, and $ \Omega^1_\var $ is torsion-free one has $J=0$ \cite[Prop.~9.7 and Cor.~9.8]{Kunz},
\cite[Thm.~3]{Knighten} (see also \cite[Lemma 1.8]{SteenVanishing}  and  \cite{Greb-Rollenske}).
The sheaf $Q$ is supported on the singular locus as well. 
 By restricting to $L$ we obtain
$$ \operatorname{Tor}_1(Q,\cO_L) \to \Omega^1_{\var \vert L} \to  \widehat\Omega^1_{\var \vert L}  $$
Now $ \operatorname{Tor}_1(Q,\cO_L)$ is concentrated on $P$, and since $\Omega^1_{\var \vert L} $
is an extension of the locally free sheaves $N_{L/\var}^\ast$ and $\Omega^1_L$, it is locally free as well, hence $ \operatorname{Tor}_1(Q,\cO_L)$ maps to zero, and $ \Omega^1_{\var \vert L} $ injects into $\widehat\Omega^1_{\var \vert L}$. From \eqref{nosec} we get
$$H^0(L,\Omega^1_{\var \vert L} )= 0\,.$$
\end{proof}
%


Now we consider the exact sequence
$$ 0 \to N_{L/\var}^\ast (-2) \to \Omega^1_{\var \vert L} (-2) \to \cO_L(-4) \to 0 $$
whence we get $h^0(N_{L/\var}^\ast(-2))=0$. By Serre duality, $h^1(N_{L/\var})=0$, so that
$$h^0(N_{L/\var})= \chi(N_{L/\var})= 3.$$

So  also in this case we have shown that the family $ \mathcal S_{1,\eta}(n)_L$ has codimension $n+1$.

\frenchspacing\bigskip

\def\cprime{$'$}

\end{document}